\theoremstyle{plain}
\newtheorem{thm}{Theorem}[section]
\newtheorem{prop}[thm]{Proposition}
\newtheorem{cor}[thm]{Corollary}
\numberwithin{equation}{section}
\newtheorem{lemma}[equation]{Lemma}
\newtheorem{theorem}[equation]{Theorem}
\newtheorem{utheorem}{\textrm{\textbf{Theorem}}}
\theoremstyle{definition}
\newtheorem{defn}[thm]{Definition}
\newtheorem{remark}[thm]{Remark}
\newcommand{\A}{\mathrm{P}}
\newcommand{\B}{\mathrm{Q}}
\newcommand{\1}{\mathbf{1}}
\newcommand{\W}{\mathcal{W}}
\newcommand{\diag}{\mathrm{diag}}
\newcommand{\C}{\mathbb{C}}
\newcommand{\Hil}{\mathcal{H}}
\newcommand{\Bou}{\mathcal{B}}
\newcommand{\conj}{\mathcal{C}}
\newcommand{\prd}{\mathbf{Prod_+}}
\newcommand{\jury}{\diamond}
\DeclareMathOperator{\Ran}{Ran}
\DeclareMathOperator{\Tr}{Tr}
\DeclareMathOperator{\tr}{tr}
\DeclareMathOperator{\rk}{rk}
\DeclareMathOperator{\ro}{\boldsymbol{\Theta}}
\title[Sharp lower bounds for generalized operator products]{Sharp lower bounds for generalized operator products}
\author[D.~Guillot, J.~Mashreghi, P.K.~Vishwakarma]{Dominique Guillot, Javad Mashreghi,\\ and Prateek Kumar Vishwakarma}
\address[Dominique Guillot]{ Department of Mathematical Sciences, University of Delaware, Newark, DE, USA 19716.}
\email{dguillot@udel.edu}
\address[Javad Mashreghi]{D\'epartement de math\'ematiques et de statistique, Universit\'e Laval, Qu\'ebec, QC, Canada G1V 0K6.}
\email{javad.mashreghi@ulaval.ca}
\address[Prateek Kumar Vishwakarma]{D\'epartement de math\'ematiques et de statistique, Universit\'e Laval, Qu\'ebec, QC, Canada G1V 0K6.}
\email{prateek-kumar.vishwakarma.1@ulaval.ca,~prateekv@alum.iisc.ac.in}
\date{\today}
\keywords{Schur Product Theorem, Loewner order, Hilbert space, Hilbert tensor product, canonical operator inequality, sharp bound, matrix convolution}
\subjclass[2020]{47B65, 15B48}
\begin{document}

\begin{abstract}
We consider general bilinear products defined by positive semidefinite matrices. Typically non-commutative, non-associative, and non-unital, these products preserve positivity and include the classical Hadamard, Kronecker, and convolutional products as special cases. We prove that every such product satisfies a sharp nonzero lower bound in the Loewner order, generalizing previous results of Vybíral [{\em Adv. Math.}, 2020] and Khare [{\em Proc. Amer. Math. Soc.}, 2021] that were obtained in the special case of the Hadamard product. Our results naturally extend to Hilbert spaces for a family of products parametrized by positive trace-class operators, providing a lower bound in the Loewner order for such general products, including for the Hilbert tensor product.
\end{abstract}

\maketitle


\section{Introduction}\label{S:intro}

\subsection{The Loewner order}
The Loewner order, introduced by K.~Loewner, is the canonical partial order on self-adjoint operators and a fundamental tool in matrix analysis and operator theory~\cite{Loewner1934}. The Loewner order $\succcurlyeq$ compares self-adjoint operators $A,B$ on a Hilbert space $\mathcal{H}$ via
\[
A \succcurlyeq B
\quad\Longleftrightarrow\quad
\langle Ax,x\rangle \ge \langle Bx,x\rangle
\qquad \forall\, x \in \mathcal{H}.
\]
Equivalently, $A \succcurlyeq B$ if and only if $A-B$ is positive semidefinite. The order compares quadratic forms and is particularly well suited to variational arguments, spectral comparison, and operator inequalities. In finite dimensions it coincides with the usual order on Hermitian matrices, while in infinite dimensions it remains compatible with the spectral theorem and is closed under norm operator limits~\cite{conway2000course,kadison1997fundamentals}. The Loewner order plays a central role in the formulation of operator inequalities and has important applications in optimization, probability, statistics, and quantum theory. See, e.g., the monographs~\cite{bhatia2009positive, khare2022matrix, zhan2004matrix} and the references therein.

A key feature of the Loewner order is its interaction with functional calculus: Loewner's theorem characterizes operator-monotone functions as precisely those whose functional calculus preserves the Loewner order~\cite{Loewner1934,donoghue2012monotone}. In particular, the function $t^\alpha$ is operator monotone for $0\le \alpha\le 1$, and $\log t$ is operator monotone, while inversion on the positive definite cone is order reversing with respect to the Loewner order. These results connect analytic structure with order-theoretic behavior and provide a systematic source of Loewner order preserving maps. This perspective also motivates the study of positivity-preserving maps beyond functional calculus, arising from algebraic constructions on matrices and operators. Classical examples include entrywise positivity preservers, characterizing functions or kernels whose entrywise action preserves positive semidefiniteness, and preservers induced by other structured bilinear operations. See~\cite{pascoe2019noncommutative,belton2019panorama,convolution-pres,guillot2025positivity} and the references therein for recent developments and applications.

Prominent instances of positivity preserving transformations include the Hadamard product and the Kronecker product: the Schur Product Theorem asserts that the Hadamard product of positive semidefinite matrices is again positive semidefinite~\cite{schur1911}, while the Kronecker product preserves positivity in a natural tensorial sense. Both operations satisfy rich inequalities in the Loewner order (see, e.g.,~\cite{bhatia2009positive,BhatiaMatrix}). More recently, attention has also turned to convolution-type products~\cite{agler2002pick, jurythesis}, which interpolate between entrywise and tensorial constructions and yield further positivity-preserving operations~\cite{convolution-pres}.

\subsection{Matrix products preserving positivity}\label{S:matrix-products}

\noindent For an integer $n\geq 1$ and matrices $A=(a_{ij})_{i,j=1}^{n}$ and $B=(b_{ij})_{i,j=1}^{n} \in \C^{n\times n}$, consider the following three classical matrix products.
\begin{align*}
\mbox{The Hadamard (or the entrywise) product:}&\qquad A\circ B \in \C^{n\times n} \\
\mbox{The matrix convolution (or the Jury product):}&\qquad A\jury B\in \C^{n\times n}\\
\mbox{The (standard) Kronecker product:}&\qquad A\otimes B\in \C^{n^2\times n^2}
\end{align*} 
The matrix entries of these products are given by:
\begin{align*}
(A\circ B)_{ij} &:= a_{ij}b_{ij} \\
(A\jury B)_{ij} &:= \sum_{k=1}^{i}\sum_{l=1}^{j} a_{kl}b_{i-k+1,j-l+1} \\
(A\otimes B)_{ij} &:= a_{ij} B \mbox{ is the $(i,j)$-th block of $A\otimes B$}.   
\end{align*}
It is well known that these products preserve positivity. 

\begin{theorem}\label{T:schur+jury} Fix an integer $n\geq 1$, and let $A,B\in \C^{n\times n}$. Then:
\begin{eqnarray*}
A \succcurlyeq 0 \quad \mbox{and} \quad B \succcurlyeq 0\quad  \implies \quad 
\begin{cases}
A\circ B \succcurlyeq 0 & \quad \mbox{\emph{(Schur \cite{schur1911})}} \\
A\jury B \succcurlyeq 0 & \quad \mbox{\emph{(Jury \cite{jurythesis,agler2002pick})}} \\
A\otimes B \succcurlyeq 0 & \quad \mbox{\emph{(Folklore)}}
\end{cases}
\end{eqnarray*}    
\end{theorem}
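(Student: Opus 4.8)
The plan is to prove all three positivity assertions by a single unified mechanism: each of the products $\circ$, $\jury$, $\otimes$ is bilinear, so it suffices to verify positivity on rank-one positive semidefinite inputs, and for such inputs each product collapses to an outer product $(v\star w)(v\star w)^*$ for a suitable bilinear operation $\star$ acting on vectors.

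First I would reduce to the rank-one case. Writing spectral (or Cholesky) decompositions $A=\sum_{r} v_r v_r^*$ and $B=\sum_s w_s w_s^*$ with $v_r,w_s\in\C^n$, bilinearity of a product $\bullet\in\{\circ,\jury,\otimes\}$ gives $A\bullet B=\sum_{r,s}(v_rv_r^*)\bullet(w_sw_s^*)$. Since a finite sum of positive semidefinite matrices is positive semidefinite, it is enough to prove $(vv^*)\bullet(ww^*)\succcurlyeq 0$ for arbitrary vectors $v,w\in\C^n$.

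Next, for each product I would compute the entries of $(vv^*)\bullet(ww^*)$ and exhibit a rank-one structure. For the Hadamard product, $\big((vv^*)\circ(ww^*)\big)_{ij}=v_i\overline{v_j}\,w_i\overline{w_j}=(v_iw_i)\overline{(v_jw_j)}$, so $(vv^*)\circ(ww^*)=(v\circ w)(v\circ w)^*$ with $v\circ w$ the entrywise product. For the Kronecker product, the same computation on block indices gives $(vv^*)\otimes(ww^*)=(v\otimes w)(v\otimes w)^*$; alternatively one may write $A=C^*C$, $B=D^*D$ and use the mixed-product identity $(C\otimes D)^*(C\otimes D)=(C^*C)\otimes(D^*D)$. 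For the Jury product, the key observation is that the double convolution sum factors:
\[
\big((vv^*)\jury(ww^*)\big)_{ij}=\sum_{k=1}^{i}\sum_{l=1}^{j} v_k\overline{v_l}\,w_{i+1-k}\overline{w_{j+1-l}}=\Big(\sum_{k=1}^{i} v_k w_{i+1-k}\Big)\overline{\Big(\sum_{l=1}^{j} v_l w_{j+1-l}\Big)},
\]
so that $(vv^*)\jury(ww^*)=uu^*$ where $u\in\C^n$ is the truncated convolution $u_i:=\sum_{k=1}^{i} v_k w_{i+1-k}$.

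Finally, in each case $(vv^*)\bullet(ww^*)$ is an outer product $zz^*\succcurlyeq 0$, and summing over $r,s$ yields $A\bullet B\succcurlyeq 0$. The only step requiring any insight is the factorization displayed above for the Jury product: once the summand is written as $\big(v_k w_{i+1-k}\big)\cdot\overline{\big(v_l w_{j+1-l}\big)}$ the separation of the $k$- and $l$-sums is immediate, but recognizing that this double convolution sum decouples is the crux of that case — and it is exactly the rank-one identity that the rest of the paper abstracts into its general framework. (Alternatively one could note that $A\circ B$ is the principal submatrix of $A\otimes B$ indexed by the diagonal pairs $\{(i,i)\}_{i=1}^n$, recovering the Schur case from the Kronecker case; but the uniform rank-one argument is the most transparent, and adapts verbatim to the generalized products studied below.)
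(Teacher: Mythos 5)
Your proof is correct, and it takes a genuinely different and more elementary route than the paper. The paper does not prove this theorem directly; it derives it as a corollary (Corollary~\ref{cor:special-cases}) of the general framework: Theorem~\ref{T:schur,jury,kronecker} exhibits $\circ,\jury,\otimes$ as elements of $\prd$ with rank-one parameterizing matrices $\mathcal{Y}$, and positivity then follows from Theorem~\ref{T:Schur-ext}(a), which is proved by writing $(P\star Q)_{ij} = \langle (P\otimes Q)^{1/2}X_j,\,(P\otimes Q)^{1/2}X_i\rangle$ and recognizing a Gram matrix. Your approach instead reduces to rank-one inputs via bilinearity and spectral decomposition, then shows by direct computation that each of $(vv^*)\circ(ww^*)$, $(vv^*)\jury(ww^*)$, and $(vv^*)\otimes(ww^*)$ is an outer product $zz^*$. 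All three factorizations check out: the Jury case is the only one requiring a nontrivial step (recognizing that the double convolution sum separates), and your computation there is correct. Your route is the classical textbook proof of the Schur Product Theorem, extended uniformly to all three products; it is shorter and self-contained, whereas the paper's route is chosen because it yields the much stronger sharp lower bounds of Theorem~\ref{T:Schur-ext}(b) with no extra work. You also correctly observe that your rank-one factorization is precisely the phenomenon that the paper's rank-one $\mathcal{Y}$ abstracts: in the paper's notation, what you verify by hand in each case is that $\mathcal{U}(\star)$ is spanned by the $\mathrm{vec}^{-1}(v_i)$, so that $B^*(\sum_j u_j C_j)\overline{A}$ plays the role of your vector $u$.
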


In the case of the Hadamard product, a stronger form of the above was recently obtained via a conjecture of Novak, a proof by Vyb\'iral, and a refinement by Khare. The result is as follows:

\begin{theorem}[Novak's Conjecture \cite{novak1999intractability}, Vyb\'iral \cite{vybiral2020variant}, Khare \cite{khare2021sharp}]\label{khare2021sharp}
Fix integers $\beta,n\geq 1$ and nonzero matrices $A,B\in \C^{n\times \beta}$. Then 
\begin{equation*}
    AA^* \circ BB^* \; \succcurlyeq \; \frac{1}{\min{(\rk AA^*, \rk BB^*)}} \; d_{AB^{T}} d_{AB^T}^{*}  \;\succcurlyeq\; 0, 
\end{equation*}
where, given a square matrix $M=(m_{jk})$, we denote by $d_{M}:= (m_{jj})$ the column vector containing the diagonal entries of $M$. Moreover, the coefficient $1/\min{(\cdot,\cdot)}$ is best possible.
\end{theorem}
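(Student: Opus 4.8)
The plan is to compress both sides of the desired inequality into statements about one auxiliary matrix. Fix $z\in\C^{n}$, write $D:=\diag(z_{1},\dots,z_{n})$, and set
\[
M \;:=\; A^{*}\,D\,\overline{B}\;\in\;\C^{\beta\times\beta}.
\]
First I would establish two bookkeeping identities, each by a direct expansion of sums of products of entries:
\[
z^{*}\bigl(AA^{*}\circ BB^{*}\bigr)z \;=\; \|M\|_{\HS}^{2},
\qquad
d_{AB^{T}}^{*}\,z \;=\; \Tr M .
\]
For the first, one rearranges $\sum_{i,j}\overline{z_{i}}z_{j}(AA^{*})_{ij}(BB^{*})_{ij}$ (relabeling $i\leftrightarrow j$ and using $(AA^{*})_{ji}=\sum_{k}\overline{A_{ik}}A_{jk}$, likewise for $B$) into $\sum_{k,l}|M_{kl}|^{2}$; for the second, $M_{kk}=\sum_{i}\overline{A_{ik}}\,z_{i}\,\overline{B_{ik}}$, and summing over $k$ gives $\sum_{i}z_{i}\,\overline{(AB^{T})_{ii}}=d_{AB^{T}}^{*}z$. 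In particular the first identity reproves that $AA^{*}\circ BB^{*}\succcurlyeq 0$.

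Next I would invoke a rank-sharpened Cauchy--Schwarz inequality: for any square matrix $M$,
\[
|\Tr M|^{2} \;\le\; \rk(M)\cdot\|M\|_{\HS}^{2},
\]
which follows from a singular value decomposition $M=\sum_{j\le\rk M}\sigma_{j}u_{j}v_{j}^{*}$ via $|\Tr M|\le\sum_{j}\sigma_{j}|v_{j}^{*}u_{j}|\le\sum_{j}\sigma_{j}\le\sqrt{\rk M}\,\|M\|_{\HS}$. Since rank is submultiplicative and $\rk(CC^{*})=\rk C$ over $\C$, one has $\rk M\le\min(\rk A,\rk B)=\min(\rk AA^{*},\rk BB^{*})$. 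Combining with the two identities yields, for every $z\in\C^{n}$,
\[
z^{*}\,d_{AB^{T}}d_{AB^{T}}^{*}\,z \;=\; |\Tr M|^{2} \;\le\; \min(\rk AA^{*},\rk BB^{*})\cdot z^{*}\bigl(AA^{*}\circ BB^{*}\bigr)z,
\]
which is precisely the claimed Loewner inequality (the estimate $d_{AB^{T}}d_{AB^{T}}^{*}\succcurlyeq 0$ being trivial, and the hypothesis $A,B\neq 0$ making the denominator a positive integer).

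For optimality of the constant I would exhibit a tight instance: take $\beta=n$ and $A=B=I_{n}$. Then $AA^{*}\circ BB^{*}=I_{n}$, $\rk AA^{*}=\rk BB^{*}=n$, and $AB^{T}=I_{n}$, so $d_{AB^{T}}=\1$, the all-ones vector. For any $c>1/n$ the matrix $I_{n}-c\,\1\1^{*}$ has the eigenvalue $1-cn<0$ along $\1$, hence is not positive semidefinite; so $1/\min(\rk AA^{*},\rk BB^{*})$ cannot be enlarged, and this for every attainable value $n$ of the minimum.

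The only genuinely non-mechanical point is spotting the factorization $z^{*}(AA^{*}\circ BB^{*})z=\|A^{*}\diag(z)\overline{B}\|_{\HS}^{2}$ together with the companion $d_{AB^{T}}^{*}z=\Tr(A^{*}\diag(z)\overline{B})$; once the quadratic form is seen as a Hilbert--Schmidt norm and the diagonal functional as the trace of the \emph{same} matrix, the rank-aware Cauchy--Schwarz closes the argument in one line, and Theorem~\ref{T:schur+jury} is not even needed. An alternative is Vyb\'iral's probabilistic route: pick independent random $g,h\in\C^{\beta}$ with i.i.d.\ entries of mean zero and $\mathbb{E}|g_{k}|^{2}=1$, put $X_{i}:=\bigl(\sum_{k}A_{ik}g_{k}\bigr)\bigl(\sum_{l}B_{il}h_{l}\bigr)$ and $W:=\sum_{k}g_{k}h_{k}$, observe $\mathbb{E}[XX^{*}]=AA^{*}\circ BB^{*}$, $\mathbb{E}[X_{i}\overline{W}]=(AB^{T})_{ii}$, $\mathbb{E}|W|^{2}=\beta$, and apply Cauchy--Schwarz in $L^{2}$; this yields only the weaker constant $1/\beta$, which is then boosted to $1/\min(\rk AA^{*},\rk BB^{*})$ by first compressing $A$ and $B$ onto the column space of $A^{*}$ (or of $B^{*}$) and using the monotonicity $P\circ(Q-\widetilde Q)\succcurlyeq 0$ supplied by Theorem~\ref{T:schur+jury}.
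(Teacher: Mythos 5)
Your proof is correct, and the core mechanism is the same one the paper uses, just specialized rather than abstracted. Concretely: the paper proves Theorem~\ref{khare2021sharp} as a corollary of the general Theorem~\ref{T:Schur-ext}, where the Hadamard product is realized by the rank-one parameter $\mathcal{Y}(\circ)$ with $v_i=e_i\otimes e_i$, hence $C_i=\mathrm{vec}^{-1}(v_i)=e_ie_i^{T}$, so that $\sum_i z_iC_i=\diag(z)$ and the Gramian factor $T=B^{*}\bigl(\sum_i z_iC_i\bigr)\overline{A}$ becomes exactly your $M=A^{*}\diag(z)\overline{B}$ after transposing. From there the paper applies the very same trace--rank inequality $|\Tr M|^{2}\le\rk(M)\,\|M\|_{\HS}^{2}$; the only cosmetic difference is that you prove it via the SVD of $M$ while the paper projects onto $\mathrm{im}(T)$ and applies Cauchy--Schwarz in the Hilbert--Schmidt inner product. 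What your direct route buys is brevity and the absence of the $\prd$ machinery, vectorization, and the $\mathbf{r}(\star)$ bookkeeping; what the paper's route buys is that the same three-line argument immediately dispatches $\jury$, $\otimes$, and the Hilbert-space tensor product as well. Your identity checks go through: $z^{*}(AA^{*}\circ BB^{*})z=\sum_{k,l}\bigl|\sum_i z_i\overline{A_{ik}}\,\overline{B_{il}}\bigr|^{2}=\|M\|_{\HS}^{2}$ and $\Tr M=\sum_i z_i\overline{(AB^{T})_{ii}}=d_{AB^{T}}^{*}z$, and the rank bound $\rk M\le\min(\rk A,\rk B)$ is correct. For optimality your example $A=B=I_n$ is the right one and handles every attainable value $n$ of the minimum; the paper's Part~III argument is a more systematic version of this (SVD of a maximizer $U_0$, padding $\widetilde A,\widetilde B$ with zero columns) needed because the general theorem must also show that the third term $\mathbf{r}(\star)$ in the minimum is unremovable, which is irrelevant here since $\mathbf{r}(\circ)=N$.
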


Notice that Theorem~\ref{khare2021sharp} strengthens Theorem~\ref{T:schur+jury}, albeit only for the Hadamard product~$\circ$. More precisely, it shows that the Hadamard product of positive semidefinite matrices is not merely positive semidefinite (as asserted in Theorem~\ref{T:schur+jury} for~$\circ$), but in fact enjoys a lower bound of rank at most~1. This naturally prompts analogous questions for other matrix products, most notably the classical Kronecker product~$\otimes$ and the more recent matrix convolution~$\jury$. 

\subsection{An overview of our main contributions}\label{S:overview}

Motivated by the aforementioned classical and contemporary considerations, one of our goals is to establish counterparts of Theorem~\ref{khare2021sharp} for both the Kronecker and the convolutional matrix product. More generally, we carry this out in a unified framework for generalized bilinear matrix products, and also extend our approach to products of trace-class operators on Hilbert spaces.\medskip

Before introducing our general framework, we state our main results in three special settings: for the Kronecker product of matrices, for Jury's convolutional product of matrices, and for the Hilbert tensor product of trace-class operators.

\begin{theorem}[Kronecker product]\label{T:kronecker-ext}
Let $m,n,\beta\geq 1$ be integers. Then for matrices $A\in \C^{m\times \beta}$ and $B\in \C^{n\times \beta}$, we have
\begin{align*}
AA^* \otimes BB^* &\;\succcurlyeq\; 
\frac{1}{\min(\rk AA^*,\, \rk BB^*)}\; \mathrm{vec}(BA^T)\,{\mathrm{vec}(BA^T)}^* \;\succcurlyeq\; 0,
\end{align*}
where $\mathrm{vec}(P)\in \C^{mn}$ is formed by stacking the columns of $P\in \C^{n\times m}$ in a column vector. Moreover, the coefficient \(1/\min(\rk AA^*,\, \rk BB^*)\) is the best possible, in the sense that it cannot be improved over all $A,B$.
\end{theorem}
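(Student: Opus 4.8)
The plan is to reduce the Kronecker statement to the Hadamard statement of Theorem~\ref{khare2021sharp} by exploiting the fact that the Kronecker product $AA^*\otimes BB^*$ is, up to a permutation similarity, a principal submatrix-free rearrangement whose diagonal blocks encode the relevant entrywise products. More concretely, I would first recall the classical identity $(AA^*)\otimes(BB^*) = (A\otimes B)(A\otimes B)^*$, so the left-hand side is $CC^*$ with $C := A\otimes B \in \C^{mn\times\beta^2}$; this immediately gives $AA^*\otimes BB^*\succcurlyeq 0$ and sets up the diagonal-vector bookkeeping. The key observation is that the vector $\mathrm{vec}(BA^T)$ is precisely a diagonal-type vector extracted from $CC^*$ in the right basis: writing $C = A\otimes B$ with rows indexed by pairs $(i,j)$, $1\le i\le m$, $1\le j\le n$, the $((i,j),(k,l))$ entry of $CC^*$ is $(AA^*)_{ik}(BB^*)_{jl}$, and one checks that the column vector $(\,(AA^*)_{ik_0}(BB^*)_{jl_0}\,)_{(i,j)}$, for suitable fixed choices, relates to $\mathrm{vec}(BA^T)$ after summing over $\beta$. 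The cleanest route, though, is to apply Theorem~\ref{khare2021sharp} directly: set $\widetilde A := A\otimes B \in \C^{mn\times \beta^2}$ but this loses the rank control, so instead I would factor differently.

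The better approach: apply Theorem~\ref{khare2021sharp} with the two factors being $A$ (size $m\times\beta$) and $B$ (size $n\times\beta$) viewed after a tensoring trick. Specifically, consider $P := A\otimes B$ does not work for ranks since $\rk(A\otimes B)=\rk A\cdot\rk B$. Instead, I would prove the inequality from scratch following Vyb\'iral's argument: let $r := \min(\rk AA^*,\rk BB^*)$, assume WLOG $r = \rk AA^*$ and (by replacing $A$ with $AU$ for a unitary $U$ absorbing a thin SVD, which changes neither $AA^*$ nor the quantity $BA^T$ up to the same unitary on $B$) reduce to $\beta = r$ with $A$ of full column rank. Then introduce the random vector trick: let $\varepsilon$ be a uniformly random sign vector (or a Gaussian vector) in $\C^\beta$, and compute $\mathbb{E}\big[(A\varepsilon\varepsilon^*A^*)\otimes(B\varepsilon\varepsilon^*B^*)\big]$. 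Since $A\varepsilon\varepsilon^*A^* \preccurlyeq \|\varepsilon\|^2\,\lambda_{\max}\cdot(\text{projection})$ is not quite it — rather, the heart is the pointwise bound $A\varepsilon\varepsilon^*A^* \preccurlyeq \beta\, AA^*$ when $\|\varepsilon\|_\infty\le 1$ combined with $\otimes$ being monotone in each argument on the PSD cone, so $(A\varepsilon\varepsilon^*A^*)\otimes(B\varepsilon\varepsilon^*B^*)\preccurlyeq \beta\,(AA^*)\otimes(B\varepsilon\varepsilon^*B^*)$; taking expectations and using $\mathbb{E}[\varepsilon\varepsilon^*] = \mathrm{I}_\beta$ yields $\mathbb{E}[\,\cdot\,]\preccurlyeq\beta\,(AA^*)\otimes(BB^*)/\beta = (AA^*)\otimes(BB^*)$ after normalizing, while the left side, being an average of rank-one PSD matrices of the form $(A\varepsilon)\otimes(B\varepsilon)\big((A\varepsilon)\otimes(B\varepsilon)\big)^* = \mathrm{vec}\big((B\varepsilon)(A\varepsilon)^T\big)\mathrm{vec}\big((B\varepsilon)(A\varepsilon)^T\big)^*$, has expectation $\succcurlyeq \frac{1}{\beta}$ times the rank-one term built from $\mathbb{E}\,\mathrm{vec}\big((B\varepsilon)(A\varepsilon)^T\big) = \mathrm{vec}(BA^T)$ by Jensen/Cauchy–Schwarz for the operator-valued second moment. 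Assembling these, with $\beta$ replaced by the effective rank $r$ after the SVD reduction, gives exactly the claimed coefficient $1/\min(\rk AA^*,\rk BB^*)$.

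For sharpness, I would exhibit, as in Khare's work, an explicit pair of matrices attaining equality: take $A$ and $B$ with all columns equal (rank one) or, to realize a general $r = \min(\rk AA^*,\rk BB^*)$, take $A = [\,\mathrm{I}_r\;|\;0\,]^T$-type blocks and $B$ built from Hadamard/DFT-type rows so that $BA^T$ has controlled norm and the Loewner inequality is tight; the point is that the random-sign averaging is tight precisely when the vectors $A\varepsilon\otimes B\varepsilon$ are ``spread out'' maximally, which the extremal configuration forces. The main obstacle I anticipate is the SVD reduction step — carefully checking that replacing $A$ by its thin factor does not distort $\mathrm{vec}(BA^T)$ beyond a harmless unitary (one must push the corresponding unitary through the Kronecker structure and through $\mathrm{vec}(\cdot)$, using $\mathrm{vec}(BUA^T) = (A\otimes B)\,\mathrm{vec}(U)$-type identities), and verifying that the monotonicity bound $A\varepsilon\varepsilon^*A^*\preccurlyeq r\,AA^*$ survives this reduction with the constant $r$ rather than the ambient $\beta$. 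Everything else — the Kronecker factorization, monotonicity of $\otimes$ on the PSD cone, and the second-moment computation — is routine once the normalization is pinned down.
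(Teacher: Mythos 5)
Your proposal takes a genuinely different route from the paper. The paper deduces this theorem from the general Theorem~\ref{T:Schur-ext}: it writes the Kronecker product in the $\prd$ framework, expresses $u^*(AA^*\otimes BB^*)u$ as $\langle T,T\rangle$ for a rank-controlled matrix $T=B^*U\overline{A}$, and applies the trace--rank inequality $|\tr T|^2\le \rk(T)\,\langle T,T\rangle$ (Khare's argument), with sharpness shown by an explicit SVD-based construction of $A,B$. Your randomization route is different in spirit and, once tidied, does yield the inequality: for a random sign vector $\varepsilon\in\{\pm1\}^{\beta}$, the pointwise bound $A\varepsilon\varepsilon^T A^*\preccurlyeq\beta AA^*$ and monotonicity of $\otimes$ on the PSD cone give $\mathbb{E}\bigl[(A\varepsilon)(A\varepsilon)^*\otimes(B\varepsilon)(B\varepsilon)^*\bigr]\preccurlyeq\beta\,AA^*\otimes BB^*$, while Jensen on the random vector $v_\varepsilon=(A\varepsilon)\otimes(B\varepsilon)=\mathrm{vec}\bigl(B\varepsilon\varepsilon^T A^T\bigr)$ gives $\mathbb{E}[v_\varepsilon v_\varepsilon^*]\succcurlyeq \mathrm{vec}(BA^T)\mathrm{vec}(BA^T)^*$; chaining gives the factor $1/\beta$. (The ``$/\beta$ after normalizing'' in your write-up is an algebraic slip; the $\beta$ appears only on the pointwise-bound side.)

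The gap you flag --- the reduction from $\beta$ to $r:=\min(\rk AA^*,\rk BB^*)$ --- is the real one, and it is not merely a matter of pushing a square unitary through the tensor structure. Say $\rk A\le\rk B$ and take a thin SVD $A=W\Sigma V^*$ with $V\in\C^{\beta\times r}$ an isometry (not a unitary). Setting $\widetilde{A}:=AV\in\C^{m\times r}$ and $\widetilde{B}:=B\overline{V}\in\C^{n\times r}$ gives $\widetilde{A}\widetilde{A}^*=AA^*$ and, crucially, $\widetilde{B}\widetilde{A}^{\,T}=B\overline{VV^*}A^T=BA^T$ (since $A^T=\overline{V}\Sigma W^T$ lies in the range of $\overline{VV^*}$); but $\widetilde{B}\widetilde{B}^*=B\,\overline{VV^*}\,B^*\preccurlyeq BB^*$ is in general a strict inequality. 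So your reduction preserves $AA^*$ and $BA^T$ exactly, shrinks $BB^*$, and you must then invoke Kronecker monotonicity once more, $AA^*\otimes BB^*\succcurlyeq AA^*\otimes\widetilde{B}\widetilde{B}^*\succcurlyeq\frac1r\mathrm{vec}(BA^T)\mathrm{vec}(BA^T)^*$, to finish. Your proposal instead worries that $\mathrm{vec}(BA^T)$ might be ``distorted'' --- it is not --- and misses that $BB^*$ is what changes. Finally, the sharpness argument is hand-waved; the randomization route does not obviously produce equality cases (both Jensen and the pointwise bound must be simultaneously tight), whereas the paper's explicit SVD-based construction of $A,B$ exhibits an exact equality. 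You would need to supply a concrete extremal pair, as the paper does.
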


An analogous refinement holds for convolution~$\jury$ as well.

\begin{theorem}[Matrix convolution]\label{T:jury-ext}
Let $N,\beta\geq 1$ be integers. Then for matrices $A,B\in \C^{N\times \beta}$ we have
\begin{align*}
AA^* \jury BB^* \;&\succcurlyeq\; 
\frac{1}{\min(\rk AA^*,\, \rk BB^*)}\; \rho_{\jury}\,\rho_{\jury}^* \;\succcurlyeq\; 0,\\
\mbox{where}\qquad \rho_{\jury}\;&:=\;\begin{pmatrix}\mathcal{S}_1(BA^T)&\cdots&\mathcal{S}_N(BA^T)\end{pmatrix}^T\;\in \; \C^N,
\end{align*}
with $\mathcal{S}_k(-)$ as the sum of the entries on $k$-th ``anti-diagonal'' given by
\begin{align*}
\mathcal{S}_k(P):=\sum_{i=1}^{k}p_{i,k-i+1} \qquad \forall P=(p_{ij})_{i,j=1}^{N},~k=1,\dots,N.
\end{align*}
Again, the coefficient \(1/\min(\rk AA^*,\, \rk BB^*)\) is the best possible.
\end{theorem}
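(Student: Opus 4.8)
The plan is to realize the convolutional product $\jury$ as a $*$-congruence compression of the Kronecker product $\otimes$ and then invoke Theorem~\ref{T:kronecker-ext}. First I would introduce, for $1\le a\le N$, the ``anti-diagonal indicator'' vectors $w_a:=\sum_{r+s=a+1,\ 1\le r,s\le N} e_r\otimes e_s\in\C^{N}\otimes\C^{N}\cong\C^{N^2}$, and let $T\in\C^{N\times N^2}$ be the matrix with rows $w_1^{*},\dots,w_N^{*}$, i.e.\ $T=\sum_{a=1}^{N}e_a w_a^{*}$. A direct expansion from the entry formulas gives the key identity
\begin{equation*}
T\,(P\otimes Q)\,T^{*}\;=\;P\jury Q\qquad\text{for all }P,Q\in\C^{N\times N},
\end{equation*}
because the $(a,b)$ entry of the left side is $\sum_{r+s=a+1}\sum_{r'+s'=b+1}p_{rr'}q_{ss'}$, which after the substitution $k=r,\ l=r'$ becomes exactly $\sum_{k\le a,\ l\le b}p_{kl}\,q_{a-k+1,\,b-l+1}=(P\jury Q)_{ab}$. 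In particular $T$ sends positive semidefinite matrices to positive semidefinite matrices, which re-derives the convolution statement of Theorem~\ref{T:schur+jury}.

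With this identity in hand the lower bound is essentially formal. The congruence $X\mapsto TXT^{*}$ preserves the Loewner order, so applying Theorem~\ref{T:kronecker-ext} with $m=n=N$ and conjugating by $T$ yields
\begin{equation*}
AA^{*}\jury BB^{*}=T(AA^{*}\otimes BB^{*})T^{*}\;\succcurlyeq\;\tfrac{1}{\min(\rk AA^{*},\,\rk BB^{*})}\bigl(T\,\mathrm{vec}(BA^{T})\bigr)\bigl(T\,\mathrm{vec}(BA^{T})\bigr)^{*}\;\succcurlyeq\;0 .
\end{equation*}
It then remains to check $T\,\mathrm{vec}(BA^{T})=\rho_{\jury}$. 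This is pure bookkeeping: the $a$-th coordinate of $T\,\mathrm{vec}(BA^{T})$ is $w_a^{*}\mathrm{vec}(BA^{T})$, and matching the Kronecker ordering with the column-stacking convention for $\mathrm{vec}$ identifies this with the sum of the entries of $BA^{T}$ along its $a$-th anti-diagonal, namely $\mathcal{S}_a(BA^{T})$. This completes the displayed chain.

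For the sharpness of the constant $1/\min(\rk AA^{*},\rk BB^{*})$ one cannot merely pull back the Kronecker extremizers, since the compression $T(\cdot)T^{*}$ only probes quadratic forms on the $N$-dimensional subspace $\Ran T^{*}=\operatorname{span}\{w_1,\dots,w_N\}$ and could a priori be strictly ``more positive''. The plan is instead to exhibit, for each admissible pair of ranks $(p,q)$, an explicit family of matrices $A,B$ of those ranks for which the best constant in $AA^{*}\jury BB^{*}\succcurlyeq c\,\rho_{\jury}\rho_{\jury}^{*}$ tends to $1/\min(p,q)$: one adapts the extremal configurations underlying Theorems~\ref{khare2021sharp} and~\ref{T:kronecker-ext}, arranged so that the associated near-extremal test vectors already lie in $\Ran T^{*}$, and transports them through the identity $z^{*}(P\jury Q)z=(T^{*}z)^{*}(P\otimes Q)(T^{*}z)$. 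I expect this last step — producing extremizers compatible with the anti-diagonal subspace, equivalently showing that the compression by $T$ is lossless on them — to be the main obstacle, whereas the positivity and lower-bound half of the theorem is immediate once the congruence identity $P\jury Q=T(P\otimes Q)T^{*}$ is in place.
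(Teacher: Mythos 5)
Your lower-bound argument is correct, and the congruence identity $P\jury Q = T\,(P\otimes Q)\,T^*$ with $T=\sum_{a=1}^N e_a w_a^*$ is a clean way to get there: both index expansions check out, the $*$-congruence preserves $\succcurlyeq$, and your bookkeeping $T\,\mathrm{vec}(BA^T)=\rho_\jury$ agrees with the paper's formula $\rho_a=\sum_{k=1}^a(BA^T)_{a-k+1,k}=\mathcal{S}_a(BA^T)$. This is a genuinely different route from the paper, which obtains both the Kronecker and the convolution inequalities directly as rank-one instances of the general Theorem~\ref{T:Schur-ext}; you instead first quote the Kronecker case of that theorem and then compress. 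The gain is transparency (no appeal to the abstract $\prd$ machinery once Theorem~\ref{T:kronecker-ext} is in hand); the cost is that you are using Theorem~\ref{T:kronecker-ext} as a black box, which matters for the second half.

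The optimality claim is not proved: what you give is a plan, and you are right that it is the nontrivial part of your route. The Kronecker extremizers as stated in Theorem~\ref{T:kronecker-ext} do not automatically land in $\Ran T^*=\mathrm{span}\{w_1,\dots,w_N\}$, so they cannot simply be pulled back. The gap is closeable, but it requires a direct construction parallel to Part~III of the paper's proof of Theorem~\ref{T:Schur-ext}: observe that $\mathrm{vec}^{-1}(\Ran T^*)=\mathcal{U}(\jury)$, the span of the anti-diagonal indicator matrices, which contains the full anti-diagonal permutation $J_N=\sum_{k=1}^N E_{N-k+1,k}$ of rank $N$; choose $U_0=J_N$, take its SVD (here $S=J_N$, $\Sigma=I$, $R=I$), build $\widetilde A,\widetilde B$ from the first $r_0=\min(\beta,N)$ singular columns and pad to $\beta$ columns, and use the test vector $z=e_N$ (so that $\mathrm{vec}^{-1}(T^*z)=J_N$). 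Then $B^*J_N\overline{A}$ is a truncated identity, the trace--rank inequality $|\tr T|^2\le (\rk T)\,\langle T,T\rangle$ holds with equality, and no constant larger than $1/\min(\rk AA^*,\rk BB^*)$ can work. Without spelling this out, the ``best possible'' statement of Theorem~\ref{T:jury-ext} remains unestablished in your proposal.
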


We present an analogous extension for the canonical tensor product in the Hilbert space setting. While it is well known that the Hilbert tensor product $\otimes$ of positive operators is again positive, we show that this positivity can be strengthened by establishing a uniform, sharp, nonzero, lower bound in the Loewner order for trace-class operators -- see Section~\ref{S:Hilbert-prelim} for the notation.

\begin{theorem}[Hilbert tensor product]\label{T:Hilbert-tensor}
Let $H_1,H_2,\widetilde{\Hil}$ be Hilbert spaces. Then for all nonzero $A\in \Bou_2(\widetilde{\Hil},H_1)$ and $B\in \Bou_2(\widetilde{\Hil}^{\#},H_2)$,
\begin{align*}
    AA^* \otimes BB^* \; \succcurlyeq \; \frac{1}{\min(\rk AA^*, \rk BB^*)} \ro_{\mathrm{vec}(BA^\#),\mathrm{vec}(BA^\#)} \; \succcurlyeq \; 0,
\end{align*}
where we follow the convention $\frac{1}{\infty}:=0$. Moreover, ${1}/{\min(\rk AA^*, \rk BB^*)}$ is the best possible scalar, i.e., it cannot be improved uniformly over $A,B$.
\end{theorem}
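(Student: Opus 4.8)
\emph{Proof sketch.} The plan is to reduce the statement to the finite-dimensional Kronecker inequality of Theorem~\ref{T:kronecker-ext}. First I would dispose of the degenerate case: if $\rk AA^*=\infty$ or $\rk BB^*=\infty$, the asserted coefficient equals $1/\infty=0$ by convention, so the first inequality collapses to $AA^*\otimes BB^*\succcurlyeq 0$, which is the (well-known) positivity of the Hilbert tensor product of positive operators, while the second inequality is the trivial positivity of the rank-one operator $\ro_{\mathrm{vec}(BA^\#),\mathrm{vec}(BA^\#)}$. So assume $p:=\rk AA^*$ and $q:=\rk BB^*$ are both finite, say $p\le q$, so that $\min(\cdot,\cdot)=p$. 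Then $A$ and $B$ have finite rank, $BA^\#$ is Hilbert--Schmidt (indeed trace class, being a product of two Hilbert--Schmidt operators), hence $\mathrm{vec}(BA^\#)$ is a genuine vector of $H_1\otimes H_2$, supported on the finite-dimensional subspace $V_1\otimes V_2$, where $V_1:=\Ran A$ ($\dim p$) and $V_2:=\Ran B$ ($\dim q$).

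The heart of the argument is to realize the data inside these finite-dimensional subspaces in a compatible way. Writing $j\colon\widetilde{\Hil}\to\widetilde{\Hil}^{\#}$ for the canonical conjugation, set $W':=(\ker A)^\perp+j^{-1}\bigl((\ker B)^\perp\bigr)\subseteq\widetilde{\Hil}$, a finite-dimensional subspace, and $W:=j(W')\subseteq\widetilde{\Hil}^{\#}$. Fix orthonormal bases $\{v_i\}$ of $V_1$, $\{z_j\}$ of $V_2$, and $\{b_r\}_{r=1}^{\beta}$ of $W'$, so that $\{j(b_r)\}$ is an orthonormal basis of $W$, and let $\mathcal{A}\in\C^{p\times\beta}$ and $\mathcal{B}\in\C^{q\times\beta}$ be the matrices of $A|_{W'}\colon W'\to V_1$ and $B|_{W}\colon W\to V_2$ in these bases. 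Since $W'\supseteq(\ker A)^\perp$ and $W\supseteq(\ker B)^\perp$, the maps $A|_{W'}$ and $B|_{W}$ surject onto $V_1$ and $V_2$; hence $\mathcal{A}\mathcal{A}^*$ and $\mathcal{B}\mathcal{B}^*$ are precisely the matrices of $AA^*|_{V_1}$ and $BB^*|_{V_2}$ in $\{v_i\}$ and $\{z_j\}$, with full ranks $\rk\mathcal{A}\mathcal{A}^*=p$ and $\rk\mathcal{B}\mathcal{B}^*=q$. A short computation unwinding the conjugations --- essentially $\langle BA^\#\,j_{H_1}(v_i),\,z_j\rangle_{H_2}=(\mathcal{A}\mathcal{B}^T)_{ij}$ --- shows that, under the isometric identification $V_1\otimes V_2\cong\C^{p}\otimes\C^{q}$, the vector $\mathrm{vec}(BA^\#)$ corresponds exactly to $\mathrm{vec}(\mathcal{B}\mathcal{A}^T)$.

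Now I would apply Theorem~\ref{T:kronecker-ext} to the nonzero matrices $\mathcal{A},\mathcal{B}$ (with common inner dimension $\beta$), obtaining $\mathcal{A}\mathcal{A}^*\otimes\mathcal{B}\mathcal{B}^*\succcurlyeq\tfrac{1}{\min(p,q)}\,\mathrm{vec}(\mathcal{B}\mathcal{A}^T)\,\mathrm{vec}(\mathcal{B}\mathcal{A}^T)^*=\tfrac1p\,\mathrm{vec}(\mathcal{B}\mathcal{A}^T)\,\mathrm{vec}(\mathcal{B}\mathcal{A}^T)^*$. Transporting this back through the identifications gives $AA^*\otimes BB^*\succcurlyeq\tfrac1p\,\ro_{\mathrm{vec}(BA^\#),\mathrm{vec}(BA^\#)}$ as quadratic forms on $V_1\otimes V_2$. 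Since $AA^*$ annihilates $V_1^\perp$ and $BB^*$ annihilates $V_2^\perp$, the operator $AA^*\otimes BB^*$ leaves $V_1\otimes V_2$ invariant and vanishes on its orthogonal complement, and the same holds for $\ro_{\mathrm{vec}(BA^\#),\mathrm{vec}(BA^\#)}$ because $\mathrm{vec}(BA^\#)\in V_1\otimes V_2$; hence the inequality on $V_1\otimes V_2$ upgrades to the claimed inequality on all of $H_1\otimes H_2$, and the second inequality is immediate. For sharpness, finite-dimensional $A,B$ (taking $H_1,H_2,\widetilde{\Hil}$ finite-dimensional, with the conjugate space identified with itself) are a special case, so the optimality of $1/\min(\rk AA^*,\rk BB^*)$ already established in Theorem~\ref{T:kronecker-ext} shows that the constant cannot be improved uniformly over $A,B$ here either.

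The step I expect to require the most care is the conjugate-space bookkeeping in the second paragraph: fixing the conventions for the transpose operator $A^\#$ and for the isometric identification $\Bou_2(H_1^{\#},H_2)\cong H_2\otimes H_1$ that defines $\mathrm{vec}$, and verifying that the conjugations compose so that $BA^\#$ genuinely plays the role that $BA^T$ plays in Theorem~\ref{T:kronecker-ext} --- in particular, that choosing orthonormal bases of $W'$ and $W$ related by $j$ prevents any spurious complex conjugation from entering the identification of $\mathrm{vec}(BA^\#)$ with $\mathrm{vec}(\mathcal{B}\mathcal{A}^T)$. Once these identifications are pinned down, the remaining steps are routine.
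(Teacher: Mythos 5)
Your reduction to the finite-dimensional Kronecker inequality is a legitimately different (and attractively elementary) route from the paper's, which instead builds a specific rank-one admissible operator $T=\ro_{w,w}$ and invokes the general Hilbert-space machinery of Theorem~\ref{T:thmC-for-rank1-fixed}. However, there is a genuine gap in how you dispose of the ``degenerate'' case. You write that if $\rk AA^*=\infty$ \emph{or} $\rk BB^*=\infty$, the coefficient $1/\min(\rk AA^*,\rk BB^*)$ equals $1/\infty=0$. This is false: the convention $1/\infty:=0$ only kicks in when \emph{both} ranks are infinite. If, say, $\rk BB^*=q<\infty$ but $\rk AA^*=\infty$, then $\min(\rk AA^*,\rk BB^*)=q$ and the theorem asserts the nontrivial bound $AA^*\otimes BB^*\succcurlyeq \tfrac1q\ro_{\mathrm{vec}(BA^\#),\mathrm{vec}(BA^\#)}$. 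Your finite-dimensional reduction does not apply here, because $V_1=\overline{\Ran A}$ is infinite-dimensional, so there is no finite matrix $\mathcal{A}$ to feed into Theorem~\ref{T:kronecker-ext}. This case is precisely what the paper handles in a separate step, by approximating $A$ with finite-rank operators $A_n$ (via the singular value expansion, keeping $\rk A_nA_n^*\ge q$ so the constant stabilizes), proving the inequality for each $A_n$, and passing to the operator-norm limit using that the positive cone is norm-closed. You would need to add such an approximation-and-limit argument to close the gap.

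Two smaller remarks. First, the bookkeeping you flag in the second paragraph (choosing bases of $W'$ and $W$ compatibly via $j$, checking $\mathrm{vec}(BA^\#)\leftrightarrow\mathrm{vec}(\mathcal{B}\mathcal{A}^T)$) is indeed the crux of the finite-rank case, and is doable but not instantaneous; in particular you should verify $\mathrm{vec}(BA^\#)\in V_1\otimes V_2$ by noting that $\Ran(BA^\#)\subseteq V_2$ and that $(\ker A^\#)^\perp=\conj_{H_1}(V_1)$ so $BA^\#$ factors through $\conj_{H_1}(V_1)$. Second, the sharpness claim as you phrase it (``finite-dimensional $A,B$ are a special case'') needs a bit more: you should exhibit, inside the \emph{given} spaces $H_1,H_2,\widetilde{\Hil}$, a family of finite-rank $A,B$ on which the bound is attained, by embedding the finite-dimensional extremizers from Theorem~\ref{T:kronecker-ext} into finite-dimensional subspaces. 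This is routine, but should be said. Once the missing infinite-rank case and these two points are attended to, your approach is correct and offers a pleasant alternative to the paper's construction-based proof, trading the explicit admissible vector $w$ for a transparent restriction-to-ranges argument.
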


\noindent As we will show, the above are all manifestations of a general construction for bilinear products parametrized by positive operators in the following main results:
\begin{description}
    \item[Theorem~\ref{T:Schur-ext}] Over Euclidean spaces for products parameterized by positive semidefinite matrices. See Page~\pageref{T:Schur-ext}.
    \item[Theorem~\ref{T:Schur-ext-Hil}] Over Hilbert spaces for products parameterized by positive trace-class operators. See Page \pageref{T:Schur-ext-Hil}.
\end{description}


From a unifying perspective, the classical Hadamard product and the Schur Product Theorem, and the standard Kronecker product and its semigroupoid structure, have played foundational roles in matrix analysis. The bilinear products in Theorem~\ref{T:Schur-ext} admit explicit formulas that are directly linked to these classical constructions. This concreteness yields uniform, dimension-dependent lower bounds in the Loewner order, which have no direct counterpart in the abstract theory. In the Hilbert space setting in Theorem~\ref{T:Schur-ext-Hil}, the same phenomena must be formulated in terms of operator ideals, spectral decompositions, conjugate spaces, and canonical isometries. The proofs rely on nontrivial issues of convergence and admissibility that are absent in finite dimension.

By presenting a unified and strengthened extension across general bilinear matrix- and operator-valued products, our work opens new avenues for theory and applications in matrix analysis, operator theory, functional analysis, combinatorial matrix theory, and related areas concerned with products and positivity.

\subsection{Organization of the paper} The rest of the paper is organized as follows. Section \ref{S:Euclidean-Intro} introduces our general framework and main results for matrices defined over Euclidean space. Section \ref{S:Euclidean} contains the proofs of our results over Euclidean spaces. Then in Section \ref{S:Hilbert} we show how our main results can be extended to positive bilinear products parameterized by the trace-class operators on Hilbert spaces, followed by the proofs in Section~\ref{S:Hilbert-proofs}. In Section~\ref{S:canonical-form} we present the canonical formulation of our bilinear products.

\section{Main results over Euclidean spaces}\label{S:Euclidean-Intro}

Henceforth, we endow each $\C^{k_1\times k_2}$, for $k_1,k_2\geq 1,$ with the standard Hilbert--Schmidt inner product given by $\langle A, B \rangle := \tr(AB^*)$ for $A,B\in \C^{k_1\times k_2}$. Here is the recipe that defines our main objects of focus.

\begin{defn}\label{defn:prd}
Fix integers $m,n,N\geq 1$.
\begin{enumerate}[(1)]
    \item Any bilinear product $\star:\C^{m\times m}\times \C^{n\times n} \longrightarrow  \C^{N\times N}$ is parameterized by a matrix $\mathcal{Y}=\mathcal{Y}(\star):=(Y_{ij})_{i,j=1}^{N}\in \C^{mnN\times mnN}$, where $Y_{ij}\in \C^{mn\times mn}$. More precisely,
\begin{align*}
    A\star B:=\begin{pmatrix}\langle A\otimes B,Y_{ij}\rangle \end{pmatrix}_{i,j=1}^{N} =
    \begin{pmatrix}\tr((A\otimes B) Y_{ij}^*)\end{pmatrix}_{i,j=1}^{N},
\end{align*}
where $A\otimes B$ is the standard Kronecker product.

\item Among all the bilinear $\star$-type products, we choose the ones parametrized by a positive semidefinite matrix:
\begin{align*}
&\prd(m,n;N):=\\
& \big\{\star:\C^{m\times m}\times \C^{n\times n} \longrightarrow \C^{N\times N} ~\big| ~\mathcal{Y}=\mathcal{Y}(\star) \mbox{ is positive semidefinite}\big\}.
\end{align*}
\end{enumerate}
When the context is clear, we write $\prd$ for $\prd(m,n,;N)$.
\end{defn}

The classical Hadamard and Kronecker product, as well as Jury's convolution product, are all special cases of the above construction.

\begin{theorem}[$\circ,\jury,\otimes \in \prd$]\label{T:schur,jury,kronecker}
For integers $m,n,N\geq 1$:
\begin{enumerate}
\item The Hadamard (entrywise) product $\circ \in \prd(N,N;N)$.
\item The matrix convolution $\jury\in \prd(N,N;N)$.
\item The Kronecker product $\otimes\in \prd(m,n;mn)$.
\end{enumerate}
In fact, each $\star\in \{\circ,\jury,\otimes\}$ is parameterized by a rank-one positive semidefinite $\mathcal{Y}=\mathcal{Y}(\star)$ of a proper size.
\end{theorem}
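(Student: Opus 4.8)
The plan is to verify each of the three items by exhibiting an explicit rank-one positive semidefinite parameterizing matrix $\mathcal Y=\mathcal Y(\star)$, i.e., a vector $v$ (of the appropriate length) such that $\mathcal Y = vv^*$ reproduces the defining formula of Definition~\ref{defn:prd}. Since $\langle A\otimes B, Y_{ij}\rangle$ with $Y_{ij}$ a block of $vv^*$ unpacks to a product of two linear functionals in $A\otimes B$, the natural ansatz is to take each block $Y_{ij}$ to be of the form $E_{ij}\, E^{ij}{}^{*}$ for suitable matrix units, so that $(A\star B)_{ij} = \tr((A\otimes B)Y_{ij}^*)$ picks out exactly the prescribed entry. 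Concretely, I would first recall that for the standard Kronecker product, $(A\otimes B)$ has $(i,j)$-block $a_{ij}B$, so $\tr((A\otimes B)M^*)$ is a linear combination of the $a_{ij}b_{kl}$ with coefficients read off from $M$; the task in each case is to choose $M=Y_{ij}$ so that this linear combination equals the required entry of $A\star B$.

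First I would treat the Kronecker product, which is the cleanest: here $N=mn$, and I claim $\mathcal Y(\otimes)$ is the rank-one projection (up to scalar) onto the vector implementing the canonical shuffle/commutation identification $\C^{m\times m}\otimes\C^{n\times n}\cong \C^{mn\times mn}$. Explicitly, writing $A\otimes B$ in its $mn\times mn$ form and comparing with the target $(A\otimes B)$ itself, one sees $A\star B = A\otimes B$ is achieved by taking $Y_{ij}=E_{ji}$ (the matrix unit), and the full array $(Y_{ij})$ assembles into a single rank-one positive semidefinite matrix $\mathcal Y = ww^*$ where $w=\mathrm{vec}$ of the identity shuffle. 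Next, for the Hadamard product, $m=n=N$ and $(A\circ B)_{ij}=a_{ij}b_{ij}$; here I would take $Y_{ij}$ supported on the single entry that pairs the $(i,j)$-entry of $A$ with the $(i,j)$-entry of $B$ inside $A\otimes B$, namely the diagonal-type unit $E_{(i,i),(j,j)}$ in block form, and then check that the resulting $mnN\times mnN$ matrix $\mathcal Y$ equals $\xi\xi^*$ for the single vector $\xi$ whose only nonzero coordinates correspond to the ``diagonal'' positions $(i,i)$ in each $mn$-block. Finally, for Jury's convolution, $(A\jury B)_{ij}=\sum_{k\le i,\,l\le j} a_{kl}b_{i-k+1,\,j-l+1}$; the same strategy applies, with $Y_{ij}$ now a sum of matrix units over the index set $\{(k,l): k\le i, l\le j\}$ paired with $\{(i-k+1,j-l+1)\}$, and one verifies that the block array $(Y_{ij})_{i,j=1}^N$ still assembles into a single rank-one $\mathcal Y=\eta\eta^*$ — the key point being that the convolution structure makes all these units share the same ``generating vector'' $\eta$ built from the shift operators, so no rank is added across blocks.

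The main obstacle is the bookkeeping that shows, in each case, that the $N^2$ blocks $Y_{ij}\in\C^{mn\times mn}$ — each individually of low rank — fit together into a \emph{single} rank-one positive semidefinite matrix $\mathcal Y\in\C^{mnN\times mnN}$, rather than merely a low-rank or block-diagonal one. This is really the assertion that $\mathcal Y = vv^*$ for one explicit $v\in\C^{mnN}$, and it requires tracking the index identifications carefully (the triple index $(i; p, q)$ with $i\in[N]$, $(p,q)\in[mn]^2$ versus a flattened index) and checking that $v_{(i;p,q)}$ factors through $i$ and through $(p,q)$ consistently. For $\otimes$ and $\circ$ this factorization is transparent; for $\jury$ I expect to need the observation that the convolution kernel $E_{i-k+1,\,j-l+1}$ depends on $i,j,k,l$ only through the ``additive'' combination, so that $v$ can be taken with $v_{(i;\,p,q)}$ a product of an indicator in $i$ against a Kronecker-delta-type factor in $(p,q)$, at which point positivity and rank one follow from $\mathcal Y = vv^*$ by construction. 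Everything else — bilinearity of $\star$, the size constraints $\circ,\jury\in\prd(N,N;N)$ and $\otimes\in\prd(m,n;mn)$ — is immediate from the construction.
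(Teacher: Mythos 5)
Your plan matches the paper's proof: in each case you exhibit blocks $Y_{ij}=v_iv_j^*$ for explicit vectors $v_i\in\C^{mn}$ (single basis vectors $e_i\otimes e_i$ for $\circ$, basis vectors $e_{i_1}\otimes f_{i_2}$ for $\otimes$, and the anti-diagonal sums $\sum_{k\le i}e_k\otimes e_{i-k+1}$ for $\jury$), so that $\mathcal Y$ is $vv^*$ for the stacked vector $v$, hence rank-one positive semidefinite. One small slip to fix when you write it up: since $\tr\big((A\otimes B)Y_{ij}^*\big)$ recovers $(A\otimes B)_{ij}$ when $Y_{ij}=e_ie_j^*=E_{ij}$, the Kronecker case needs $Y_{ij}=E_{ij}$, not $E_{ji}$.
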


The set $\prd$ is naturally endowed with an addition and scalar multiplication. For integers $m,n,N\geq 1$, products $\star_1,\star_2\in \prd(m,n;N)$, and $\alpha,\beta\geq 0$, we define $\alpha\star_1+\beta\star_2$ via:
\begin{align*}
    A(\alpha\star_1+\beta\star_2)B:= \alpha A\star_1 B +\beta A\star_2B \qquad \forall A\in \C^{m\times m},B\in \C^{n\times n}.
\end{align*}
Also, for a sequence $(\star_k)_{k\geq 1}$, the limit $\star:=\lim_{_{k\to \infty}} \star_k$ is defined by the entrywise limit of matrices $\mathcal{Y}(\star):=\lim_{k\to \infty} \mathcal{Y}(\star_k)$, if it exists. Observe that, from the above definition, we have
\[
\mathcal{Y}(\alpha \star_1 + \beta \star_2) = \alpha \mathcal{Y}(\star_1) + \beta \mathcal{Y}(\star_2)
\]
and 
\[
\mathcal{Y}(\lim_{k \to \infty} \star_k) = \lim_{k \to \infty} \mathcal{Y}(\star_k) 
\]
if the limits exist. Under the above operations, the set $\prd(m,n; N)$ forms a closed convex cone. 

\begin{prop}[$\prd$ is a closed convex cone]\label{T:prod-convex}
Fix integers $m,n,N\geq 1$, and equip $\prd=\prd(m,n;N)$ with the addition and scalar multiplication as above. Then:
\begin{enumerate}[$(a)$]
    \item Provided it exists, the limit $\lim_{_{k\to \infty}} \star_k \in \prd$ for all $(\star_k)_{k\geq 1}\subset \prd$.
    \item The combination $\alpha\star_1+\beta\star_2 \in \prd$, for all $\star_1,\star_2 \in \prd$, and all $\alpha,\beta\geq 0$.
    \item Moreover, for each $\star \in \prd$, there exist $\star_1,\dots,\star_\alpha\in \prd$ such that each $\mathcal{Y}_i:=\mathcal{Y}_i(\star_i)$ is rank-one, and $\star=\star_1+\dots+\star_\alpha$.
\end{enumerate}
\end{prop}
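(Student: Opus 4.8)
The plan is to verify the three cone axioms directly from the identity $\mathcal{Y}(\alpha\star_1+\beta\star_2)=\alpha\mathcal{Y}(\star_1)+\beta\mathcal{Y}(\star_2)$ and the limit compatibility $\mathcal{Y}(\lim_k\star_k)=\lim_k\mathcal{Y}(\star_k)$ (both recorded just before the statement), reducing everything to elementary facts about the cone of positive semidefinite matrices in $\C^{mnN\times mnN}$.

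For part $(b)$: given $\star_1,\star_2\in\prd$ and $\alpha,\beta\geq 0$, the map $\alpha\star_1+\beta\star_2$ is again a bilinear product $\C^{m\times m}\times\C^{n\times n}\to\C^{N\times N}$, so by Definition~\ref{defn:prd}(1) it is parametrized by some matrix, and by the displayed additivity of $\mathcal{Y}$ that matrix is $\alpha\mathcal{Y}(\star_1)+\beta\mathcal{Y}(\star_2)$. Since $\mathcal{Y}(\star_1),\mathcal{Y}(\star_2)\succcurlyeq 0$ and $\alpha,\beta\geq 0$, this nonnegative combination is positive semidefinite, so $\alpha\star_1+\beta\star_2\in\prd$. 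For part $(a)$: if $\star=\lim_k\star_k$ exists, then by definition the entrywise limit $\mathcal{Y}(\star)=\lim_k\mathcal{Y}(\star_k)$ exists; each $\mathcal{Y}(\star_k)\succcurlyeq 0$, and the cone of positive semidefinite matrices is closed under entrywise (equivalently, norm) limits, so $\mathcal{Y}(\star)\succcurlyeq 0$ and hence $\star\in\prd$. One small point to spell out here is that the entrywise limit of the $\mathcal{Y}(\star_k)$ is automatically self-adjoint and that entrywise convergence of a fixed finite-size matrix sequence is the same as norm convergence, so the standard closedness of the PSD cone applies.

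For part $(c)$: take $\star\in\prd$, so $\mathcal{Y}:=\mathcal{Y}(\star)\succcurlyeq 0$ in $\C^{mnN\times mnN}$. By the spectral theorem write $\mathcal{Y}=\sum_{i=1}^{\alpha}\lambda_i v_iv_i^*$ with $\lambda_i>0$ and $\alpha=\rk\mathcal{Y}$; each summand $\mathcal{Y}_i:=\lambda_i v_iv_i^*$ is a rank-one positive semidefinite matrix of the correct size $mnN\times mnN$. By Definition~\ref{defn:prd}(1), each $\mathcal{Y}_i$ parametrizes a bilinear product $\star_i\in\prd$ with $\mathcal{Y}(\star_i)=\mathcal{Y}_i$ rank-one, and by the additivity identity iterated, $\mathcal{Y}(\star_1+\dots+\star_\alpha)=\sum_i\mathcal{Y}_i=\mathcal{Y}=\mathcal{Y}(\star)$. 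Since a product in $\prd$ is determined by its parameter matrix (the formula in Definition~\ref{defn:prd}(1) recovers $A\star B$ from $\mathcal{Y}$), we conclude $\star=\star_1+\dots+\star_\alpha$.

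None of the steps is a genuine obstacle; the only thing requiring a little care is the bookkeeping that a bilinear product is uniquely determined by its parameter matrix and vice versa — i.e.\ that the correspondence $\star\leftrightarrow\mathcal{Y}(\star)$ in Definition~\ref{defn:prd} is a linear bijection onto all of $\C^{mnN\times mnN}$ — so that ``$\mathcal{Y}(\star)=\mathcal{Y}(\star')$ implies $\star=\star'$'' is legitimate and so that every rank-one PSD matrix actually arises as some $\mathcal{Y}(\star_i)$. This is immediate from the defining formula but should be stated explicitly before invoking it in parts $(b)$ and $(c)$.
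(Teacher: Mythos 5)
Your proof is correct and follows essentially the same route as the paper: all three parts reduce to the standard facts that positive semidefinite matrices form a closed convex cone and decompose as sums of rank-one positive semidefinite matrices, transported to $\prd$ via the linear bijection $\star\leftrightarrow\mathcal{Y}(\star)$. The only cosmetic difference is that you produce the rank-one summands in part~$(c)$ via the spectral theorem, whereas the paper writes $\mathcal{Y}=\mathcal{X}\mathcal{X}^*$ and splits $\mathcal{X}$ into columns; your explicit remark that the correspondence $\star\leftrightarrow\mathcal{Y}(\star)$ is a bijection is a small but worthwhile clarification that the paper leaves implicit.
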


In particular, the collection of matrix products $\prd$ forms a convex cone generated by those $\star \in \prd$ arising from rank-one positive semidefinite $\mathcal{Y}=\mathcal{Y}(\star)$.

\begin{defn}[Rank of a product $\star \in \prd$]
We say that $\star \in \prd$ has rank $\alpha$ if $\rk\left(\mathcal{Y}(\star)\right) = \alpha$, where $\rk X$ denotes the rank of the matrix $X$.
\end{defn}

As an application, our next main result yields extensions of the classical Schur Product Theorem~\cite{schur1911}, as well as its recent stronger version conjectured by Novak~\cite{novak1999intractability}, proved by Vyb\'iral~\cite{vybiral2020variant}, and subsequently sharpened by Khare~\cite{khare2021sharp}.  In particular, we obtain an uncountable family of analogues of the Schur Product Theorem and its optimal lower-bound refinement. This leads to what may be viewed as a \emph{stronger Schur Product Theorem} valid for every $\star \in \prd$.

\begin{utheorem}\label{T:Schur-ext}
Let $m,n,N,\alpha\geq 1$ be integers. For $\star \in \prd(m,n;N)$ of rank $\alpha\geq 1$, the following holds:
\begin{enumerate}[$(a)$]
\item For all $P\in \C^{m\times m}$ and $Q\in \C^{n\times n}$:
\[
P \; \succcurlyeq \; 0 \; \mbox{ and }\; Q \; \succcurlyeq \; 0 \quad \implies \quad P\star Q \; \succcurlyeq \; 0.
\]
\item More strongly, for all nonzero $A \in \C^{m\times \beta}$ and $B\in \C^{n\times \beta}$ for a given integer $\beta\geq 1$,
\[
AA^* \star BB^* \; \succcurlyeq \;  \sum_{k=1}^{\alpha}\frac{1}{\min\big(\rk AA^*,\rk BB^*,{\bf r}(k)\big)} \rho_k\, \rho_k^* \; \succcurlyeq \; 0,
\]
where the definition of each $\rho_k$ and ${\bf r}(k)$ is given next. 
\end{enumerate}  
From Proposition~\ref{T:prod-convex} we know that $\star=\sum_{j=1}^{\alpha}\star_j$ where each $\mathcal{Y}_k=\mathcal{Y}_k(\star_k)$ is rank-one positive semidefinite.
Thus, every $\mathcal{Y}_{k}=\begin{pmatrix}v_{i,k}v_{j,k}^*\end{pmatrix}_{i,j=1}^{N}$ for some $v_{1,k},\dots,v_{N,k}\in \C^{mn}$. Suppose $\mathrm{vec}(M)\in \C^{mn}$ is formed by stacking the columns of $M\in \C^{n\times m}$ into a column vector. Then $\rho_k$ is defined by:
\begin{align*}
\rho_k\;=\;\rho_{\star_k}\;:=\;
\begin{pmatrix}\,\langle\,\mathrm{vec}(BA^T),\,v_{1,k}\rangle & \cdots & \langle \mathrm{vec}(BA^T),\,v_{N,k}\rangle\,\end{pmatrix}^T \; \in \; \C^{N}.
\end{align*}
Moreover, for every $k\in \{1,\dots,\alpha\}$ we define 
\begin{align*}
{\bf r}(\star_k)={\bf r}(k)&:=\max\big\{\rk X: X\in \mathcal{U}(k)\big\},\\
\mbox{where}\qquad \mathcal{U}(\star_k)=\mathcal{U}(k)&:=\mathrm{span}\,\big\{\mathrm{vec}^{-1}(v_{i,k}):i=1,\dots,N\big\} \; \subset \; \C^{n\times m}.
\end{align*} 
Finally, for a fixed $k\in \{1,\dots,\alpha\}$ the scalar $1/{\min\big(\rk AA^*,\rk BB^*,{\bf r}(k)\big)}$ is the best possible for $AA^* \star_k BB^*$, i.e., it can not be improved over all $A,B$.
\end{utheorem}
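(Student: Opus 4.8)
\noindent\emph{Proof plan.} The plan is to reduce to a rank-one parameter and then recognize the product as a Gram matrix. By Proposition~\ref{T:prod-convex}$(c)$, write $\star=\sum_{k=1}^{\alpha}\star_k$ with each $\mathcal Y_k=\mathcal Y_k(\star_k)=(v_{i,k}v_{j,k}^*)_{i,j=1}^N$ of rank one. Since $P\star Q=\sum_k P\star_k Q$ by definition, and each of $\rho_k$ and ${\bf r}(k)$ depends only on $\star_k$, it suffices to prove $(a)$ and $(b)$ for a single rank-one product; fix such a $k$ and suppress it, writing $X_i:=\mathrm{vec}^{-1}(v_i)\in\C^{n\times m}$, $\mathcal U:=\mathrm{span}\{X_i:i\}$, and ${\bf r}:=\max\{\rk X:X\in\mathcal U\}$.

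The crux is an identity. Using $AA^*\otimes BB^*=(A\otimes B)(A\otimes B)^*$, the fact that $\mathrm{vec}$ is a Hilbert--Schmidt isometry satisfying $\mathrm{vec}(BMA^T)=(A\otimes B)\mathrm{vec}(M)$, and $\langle BMA^T,X\rangle_{\HS}=\langle M,\,B^*X\bar A\rangle_{\HS}$, a direct computation gives
\[
\bigl(AA^*\star BB^*\bigr)_{ij}=\langle \Phi_j,\Phi_i\rangle_{\HS},
\qquad
(\rho)_i=\langle I_\beta,\Phi_i\rangle_{\HS},
\qquad
\text{where }\ \Phi_i:=B^*X_i\bar A .
\]
Hence $AA^*\star BB^*$ is the Gram matrix of $\Phi_1,\dots,\Phi_N$, which (since every positive semidefinite matrix is some $AA^*$) proves $(a)$. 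For $(b)$, put $\mathcal V:=\mathrm{span}\{\Phi_i\}=B^*\mathcal U\bar A$. Given $x\in\C^N$, set $\Psi:=\sum_i x_i\Phi_i\in\mathcal V$; then $x^*(AA^*\star BB^*)x=\|\Psi\|_{\HS}^2$ and $\rho^*x=\langle\Psi,I_\beta\rangle_{\HS}=\langle\Psi,\Pi_{\mathcal V}I_\beta\rangle_{\HS}$, where $\Pi_{\mathcal V}$ is the orthogonal projection onto $\mathcal V$, so Cauchy--Schwarz yields
\[
AA^*\star BB^*\ \succcurlyeq\ \frac{1}{\|\Pi_{\mathcal V}I_\beta\|_{\HS}^2}\,\rho\rho^*\ \succcurlyeq\ 0 ,
\]
the middle term being read as $0$ when $\rho=0$.

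It then remains to bound $\|\Pi_{\mathcal V}I_\beta\|_{\HS}^2=\max_{0\ne U\in\mathcal V}|\tr U|^2/\|U\|_{\HS}^2$. For $U=B^*X\bar A$ with $X\in\mathcal U$, combining the elementary inequality $|\tr U|^2\le\rk(U)\,\|U\|_{\HS}^2$ (Cauchy--Schwarz on the singular values of $U$) with $\rk(B^*X\bar A)\le\min(\rk B,\rk X,\rk A)\le\min(\rk BB^*,\rk AA^*,{\bf r})$ gives $\|\Pi_{\mathcal V}I_\beta\|_{\HS}^2\le\min(\rk AA^*,\rk BB^*,{\bf r})$; summing over $k$ completes $(b)$. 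The steps above should be routine once the Gram identity is in place; I expect the most delicate point to be the sharpness claim, where one must exhibit extremal $A,B$ for every admissible value and every triple $(m,n,\beta)$.

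For sharpness I would argue as follows. Fix $k$, pick $X_0\in\mathcal U(k)$ with $\rk X_0={\bf r}(k)$, and take a singular value decomposition $X_0=\sum_{s=1}^{{\bf r}(k)}\theta_s\,p_sq_s^*$. Given $r$ with $1\le r\le\min(m,n,\beta,{\bf r}(k))$, set
\[
A:=\sum_{s=1}^{r}\theta_s^{-1/2}\,\bar q_s\,e_s^*\in\C^{m\times\beta},
\qquad
B:=\sum_{s=1}^{r}\theta_s^{-1/2}\,p_s\,e_s^*\in\C^{n\times\beta};
\]
then $\rk AA^*=\rk BB^*=r$, and a short computation gives $\Phi_0:=B^*X_0\bar A=\sum_{s=1}^r e_se_s^*\in\mathcal V_k$, a rank-$r$ orthogonal projection. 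Since $|\tr\Phi_0|^2/\|\Phi_0\|_{\HS}^2=r$, we get $\|\Pi_{\mathcal V_k}I_\beta\|_{\HS}^2=r$ (in particular $\rho_k\ne0$), so for this pair $AA^*\star_k BB^*\succcurlyeq c\,\rho_k\rho_k^*$ holds exactly for $c\le 1/r$. As $\min(\rk AA^*,\rk BB^*,{\bf r}(k))=r$ here, the coefficient in $(b)$ cannot be enlarged; letting $r$ run over all admissible values --- and handling unequal ranks by letting $A$ (resp.\ $B$) carry extra columns whose range is orthogonal to $\{\bar q_s\}_{s\le r}$ (resp.\ $\{p_s\}_{s\le r}$) --- gives the full optimality statement.
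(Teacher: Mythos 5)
Your proof is correct and follows essentially the same strategy as the paper's: reduce to a rank-one parameter via Proposition~\ref{T:prod-convex}(c), identify $AA^*\star BB^*$ as the Gram matrix of the operators $\Phi_i = B^*X_i\bar{A}$ via the vectorization identity, apply the trace–rank inequality, and prove sharpness by constructing $A,B$ from the singular value decomposition of a maximal-rank element of $\mathcal{U}(k)$ so that $\Phi_0$ becomes a rank-$r$ projection.

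The one genuine organizational difference worth noting is your introduction of the projection norm $\|\Pi_{\mathcal V}I_\beta\|_{\HS}^2$ as the ``natural'' constant. The paper applies the trace–rank inequality directly to $T=B^*(\sum_j u_jC_j)\bar A$ and never isolates this intermediate quantity; you instead observe that the optimal coefficient is exactly $\|\Pi_{\mathcal V}I_\beta\|_{\HS}^{-2}$ and only then use the trace–rank inequality to bound it by $\min(\rk AA^*,\rk BB^*,{\bf r}(k))^{-1}$. This is slightly sharper pointwise (your bound dominates the paper's whenever $\|\Pi_{\mathcal V}I_\beta\|_{\HS}^2<\min(\rk AA^*,\rk BB^*,{\bf r}(k))$), and it makes the sharpness step more transparent: showing equality reduces cleanly to exhibiting $\Phi_0\in\mathcal V$ with $|\tr\Phi_0|^2/\|\Phi_0\|_{\HS}^2$ equal to $\min(\rk AA^*,\rk BB^*,{\bf r}(k))$. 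Your sharpness argument also covers every admissible rank $1\le r\le\min(m,n,\beta,{\bf r}(k))$, whereas the paper only treats the maximal one; this is more than the claimed statement needs, but it is a mild strengthening. Everything else — the vectorization identity, the Gram identity, the SVD-based extremizer, the padding for arbitrary $\beta$ — coincides with the paper's Part I–III up to notation.
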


As an immediate application, Theorem \ref{T:Schur-ext} yields Theorems \ref{T:schur+jury} and \ref{khare2021sharp}, as well as Theorems \ref{T:kronecker-ext} and \ref{T:jury-ext}. 

\begin{cor}\label{cor:special-cases}
\
\begin{enumerate}
    \item Theorem~\ref{T:Schur-ext} yields Theorem~\ref{T:schur+jury} and Theorem~\ref{khare2021sharp}.
    \item Theorem~\ref{T:Schur-ext} yields the novel stronger forms for $\star \in \{\jury,\otimes\}$ mentioned in Theorem~\ref{T:kronecker-ext} and Theorem~\ref{T:jury-ext}.
\end{enumerate}
\end{cor}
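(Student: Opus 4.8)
The plan is to obtain Theorems~\ref{T:schur+jury}, \ref{khare2021sharp}, \ref{T:kronecker-ext}, and \ref{T:jury-ext} by specializing Theorem~\ref{T:Schur-ext} to the three concrete products $\circ$, $\jury$, $\otimes$. By Theorem~\ref{T:schur,jury,kronecker}, each of these lies in the appropriate cone $\prd$ and is parametrized by a \emph{rank-one} positive semidefinite matrix $\mathcal{Y}(\star)=vv^{*}$, so the rank $\alpha$ equals $1$ in every case and the sum in Theorem~\ref{T:Schur-ext}$(b)$ collapses to the single term $\frac{1}{\min(\rk AA^{*},\,\rk BB^{*},\,{\bf r}(1))}\,\rho_1\rho_1^{*}$. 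Theorem~\ref{T:schur+jury} then follows at once from Theorem~\ref{T:Schur-ext}$(a)$, which is precisely the positivity-preservation statement for every $\star\in\prd$ (or, alternatively, from the trailing ``$\succcurlyeq 0$'' in part $(b)$, writing $P=AA^{*}$, $Q=BB^{*}$).

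The first concrete step is to read off, in each of the three cases, the vectors $v_{i,1}\in\C^{mn}$ appearing in Theorem~\ref{T:Schur-ext} from the parametrization $\mathcal{Y}(\star)=(v_{i,1}v_{j,1}^{*})_{i,j=1}^{N}$. I would do this by matching the defining identity $(A\star B)_{ij}=\tr((A\otimes B)Y_{ij}^{*})$ against the entry formulas of Section~\ref{S:matrix-products}. This should yield $v_{I,1}=e_I$ for the Kronecker product (with $N=mn$); $v_{i,1}=e_i\otimes e_i$ for the Hadamard product; and $v_{i,1}=\sum_{a=1}^{i}e_a\otimes e_{i-a+1}$, the indicator vector of the $i$-th anti-diagonal, for Jury's convolution (here $m=n=N$).

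Next I would convert these into the two quantities Theorem~\ref{T:Schur-ext} requires, using the elementary identification $\mathrm{vec}^{-1}(e_p\otimes e_q)=e_q e_p^{T}$. On the one hand, the coordinates $\langle\mathrm{vec}(BA^{T}),v_{i,1}\rangle$ of $\rho_1$ should reduce to: the $I$-th coordinate of $\mathrm{vec}(BA^{T})$ for $\otimes$, giving $\rho_1=\mathrm{vec}(BA^{T})$; the diagonal entry $(BA^{T})_{ii}=(AB^{T})_{ii}$ for $\circ$, giving $\rho_1=d_{AB^{T}}$; and the anti-diagonal sum $\mathcal{S}_i(BA^{T})$ for $\jury$, giving $\rho_1=\rho_{\jury}$ --- so that $\rho_1\rho_1^{*}$ is exactly the stated rank-one lower bound in each target theorem. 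On the other hand, $\mathcal{U}(1)=\mathrm{span}\{\mathrm{vec}^{-1}(v_{i,1})\}$ should turn out to be all of $\C^{n\times m}$ for $\otimes$, the diagonal matrices for $\circ$, and the span of the anti-diagonal indicator matrices for $\jury$, hence ${\bf r}(1)=\min(m,n)$, $N$, and $N$ respectively. In every case one then verifies ${\bf r}(1)\ge\min(\rk AA^{*},\rk BB^{*})$ for all admissible $A,B$ --- automatic, since $\rk AA^{*}$ is at most the number of rows of $A$ and likewise for $B$ --- so the minimum in the constant simplifies to $\min(\rk AA^{*},\rk BB^{*})$, matching Theorems~\ref{khare2021sharp}, \ref{T:kronecker-ext}, and \ref{T:jury-ext}; and the optimality of this constant transfers verbatim from the sharpness clause of Theorem~\ref{T:Schur-ext} applied with $\star_1=\star$.

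I expect the only step with genuine content to be the evaluation of ${\bf r}(1)$ for $\jury$: I must check that the linear span of the anti-diagonal indicator matrices $E_1,\dots,E_N$ in $\C^{N\times N}$ contains a matrix of full rank $N$. This is the case because $E_N$ --- the reversal (exchange) permutation matrix --- is one of these generators and is invertible. Everything else, namely keeping track of the column-stacking $\mathrm{vec}$ convention, the Kronecker index ordering, and the harmless complex conjugations in the Hilbert--Schmidt inner product, is routine bookkeeping.
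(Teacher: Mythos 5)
Your proposal is correct and follows essentially the same route as the paper's proof: invoke Theorem~\ref{T:schur,jury,kronecker} to see that each of $\circ,\jury,\otimes$ is parametrized by a rank-one positive semidefinite $\mathcal{Y}$, read off the vectors $v_{i,1}$, compute $\rho_1$ via the column-stacking identity $\mathrm{vec}^{-1}(e_p\otimes e_q)=e_qe_p^T$, and observe that ${\bf r}(1)\ge\min(\rk AA^*,\rk BB^*)$ so the constant collapses to $1/\min(\rk AA^*,\rk BB^*)$. The only place you are slightly more explicit than the paper is in actually evaluating $\mathcal{U}(1)$ and ${\bf r}(1)$ in each case (diagonal matrices with ${\bf r}=N$ for $\circ$, all of $\C^{n\times m}$ with ${\bf r}=\min(m,n)$ for $\otimes$, anti-diagonal span with ${\bf r}=N$ via the exchange matrix for $\jury$), whereas the paper dispatches this with a one-line remark; both reach the same conclusion, and your observation that the exchange matrix witnesses full rank for $\jury$ is the clean way to see it.
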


\section{Proofs over Euclidean spaces}\label{S:Euclidean}

The aim of this section is to present proofs of the results in the Euclidean setting, thereby also building intuition for the more intricate Hilbert space related developments in the subsequent sections.

\subsection{General cases of all \(\star \in \prd\)} We begin with the proof of Proposition~\ref{T:prod-convex} and Theorem~\ref{T:Schur-ext}.

\begin{proof}[Proof of Proposition~\ref{T:prod-convex}]

Since positive semidefinite matrices form a closed convex cone the assertion $(a)$ holds. We will prove the other two assertions together. Suppose we are given a product $\star=\star(\mathcal{Y})$ for some positive semidefinite $\mathcal{Y}=(Y_{ij})_{i,j=1}^{N}\in \C^{mnN\times mnN}$, where each $Y_{ij}\in \C^{mn\times mn}$. Since positive semidefinite matrices are Gram matrices, we have that $\mathcal{Y}$ can be written as a product $\mathcal{Y}=\mathcal{X}\mathcal{X}^*$, where $\mathcal{X}\in \C^{mnN\times \alpha}$ for some integer $\alpha\geq 1$. We write $\mathcal{X}$ as a block column matrix to obtain the following: 
\[
\mathcal{Y}
=\mathcal{X}\mathcal{X}^*=
\begin{bmatrix}
X_1\\
X_2\\
\vdots\\
X_N
\end{bmatrix}
\begin{bmatrix}
X_1^* & X_2^* & \cdots & X_N^*
\end{bmatrix}
=
\begin{bmatrix}
X_1 X_1^* & X_1 X_2^* & \cdots & X_1 X_N^*\\
X_2 X_1^* & X_2 X_2^* & \cdots & X_2 X_N^*\\
\vdots & \vdots & \ddots & \vdots\\
X_N X_1^* & X_N X_2^* & \cdots & X_N X_N^*
\end{bmatrix}.
\]
In particular, each block satisfies $Y_{ij} = X_i X_j^*$. Now suppose $v_{i1},\dots,v_{i\alpha}\in \C^{mn}$ are the columns of $X_{i}$ for $i=1,\dots,N$. Using linearity,  we have
\begin{align*}
    (A\star B)_{ij}&= 
    \tr\Big{(}(A\otimes B) \sum_{\kappa=1}^{\alpha} v_{j\kappa} v_{i\kappa}^*\Big{)} = \sum_{\kappa=1}^{\alpha}\tr\big{(}(A\otimes B) v_{j\kappa} v_{i\kappa}^*\big{)}\\
& = (A\star_1 B)_{ij} + (A\star_2 B)_{ij}+ \dots + (A\star_\alpha B)_{ij}
\end{align*}
where $\star_k=\star_k(\mathcal{Y}_k)$ with
\[
\mathcal{Y}_{k}=
\begin{bmatrix}
v_{1\kappa}\\
v_{2\kappa}\\
\vdots\\
v_{N\kappa}
\end{bmatrix}
\begin{bmatrix}
v_{1\kappa}^* & v_{2\kappa}^* & \dots & v_{N\kappa}^*
\end{bmatrix}.
\]
Clearly each of them is rank-one and positive semidefinite. These steps are real-linearly reversible, completing the proof.
\end{proof}

\begin{defn}[The square root]
Let $P\in \C^{\kappa\times \kappa}$ be a positive semidefinite matrix with spectral decomposition $P=U^*DU$ with $U$ unitary and $D=\diag(d_1,\dots,d_\kappa)$ diagonal with nonnegative diagonal entries. We define the {\it square root} of $P$ by $P^{1/2}:=U^*D^{1/2}U$ with $D^{1/2}:=\diag(d_1^{1/2},\dots,d_\kappa^{1/2})$.    
\end{defn}

We can now prove Theorem \ref{T:Schur-ext}.
\begin{proof}[Proof of Theorem~\ref{T:Schur-ext}] We divide the proof into three part.\medskip

\textbf{Part I. Positivity.} The positive semidefinite matrix $\mathcal{Y}$ is a Gram matrix, i.e., it can be written as a product $\mathcal{Y}=\mathcal{X}\mathcal{X}^*$, where $\mathcal{X}\in \C^{mnN\times \alpha}$. Write $\mathcal{X}$ as a block column matrix $\mathcal{X}=\begin{pmatrix}X_1^T & \dots & X_{N}^T\end{pmatrix}^T$ where each $X_i \in \C^{mn\times \alpha}$; which gives that $Y_{ij}=X_iX_j^*$. Let $P\in \C^{m\times m},Q\in \C^{n\times n}$ be positive semidefinite. Then $P\otimes Q$ is positive semidefinite, and so using its square root,
\begin{align*}
P\star Q 
&= \begin{pmatrix} \langle P\otimes Q, X_iX_j^* \rangle  \end{pmatrix}_{i,j=1}^{N}= \begin{pmatrix} \tr ((P\otimes Q) X_jX_i^*)  \end{pmatrix}_{i,j=1}^{N}\\
&= \begin{pmatrix} \tr ((P\otimes Q)^{1/2} X_j ((P\otimes Q)^{1/2} X_i)^*)  \end{pmatrix}_{i,j=1}^{N}\\
&=
\begin{pmatrix} \langle (P\otimes Q)^{1/2} X_j, (P\otimes Q)^{1/2} X_i \rangle  \end{pmatrix}_{i,j=1}^{N}.
\end{align*}
This is a Gram matrix, proving part $(a)$.\medskip

\textbf{Part II. Inequality.} For the proof of part~\textup{(b)}, assume first that $\mathcal{Y}=\mathcal{Y}(\star)$ is
rank-one positive semidefinite. Then there exist vectors $v_1,\dots,v_N\in\C^{mn}$ such
that $Y_{ij}=v_i v_j^*$. Suppose $C_j \in \C^{n\times m}$ is the unique matrix such that
\[
\mathrm{vec}(C_j) = v_j,
\]
where $\mathrm{vec}:\C^{n\times m}\to\C^{mn}$ denotes the column-stacking isometry. Let $A\in \C^{m\times \beta}$ and $B\in \C^{n\times \beta}$ and compute the entries of $AA^*\star BB^*$:
\[
(AA^*\star BB^*)_{ij}=\left\langle (AA^*\otimes BB^*)^{1/2}\mathrm{vec}(C_j),\,
(AA^*\otimes BB^*)^{1/2}\mathrm{vec}(C_i)\right\rangle.
\]
Since $AA^*\otimes BB^*$ is positive and 
\[
AA^*\otimes BB^* = (A\otimes B)(A^*\otimes B^*) = (A\otimes B)(A\otimes B)^*,
\]
we have
\begin{align*}
\langle (AA^*\otimes BB^*)^{1/2}&\mathrm{vec}(C_j),\,
(AA^*\otimes BB^*)^{1/2}\mathrm{vec}(C_i)\rangle \\
&=\left\langle (AA^*\otimes BB^*)\mathrm{vec}(C_j),\,
\mathrm{vec}(C_i)\right\rangle\\
&= \left\langle (A^*\otimes B^*)\mathrm{vec}(C_j),\,
(A^*\otimes B^*)\mathrm{vec}(C_i)\right\rangle.
\end{align*}
The matrix dimensions are compatible for the vectorization identity \cite{vectorization_math_wikipedia}:
\[
(A^*\otimes B^*)\,\mathrm{vec}(C)
= \mathrm{vec}\big(B^* C\,\overline{A}\big),
\qquad C\in\C^{n\times m}.
\]
We thus obtain the Gramian:
\begin{align*}
(AA^*\star BB^*)_{ij}
&=\Big\langle\mathrm{vec}\big(B^*C_j\,\overline{A}\big),\,\mathrm{vec}\big(B^*C_i\,\overline{A}\big)\Big\rangle= \big\langle B^* C_j\,\overline{A},\ B^* C_i\,\overline{A}\big\rangle.
\end{align*}
Thus for all $u=(u_1,\dots,u_N)^T\in \C^{N}$, we have
\begin{align*}
u^*(AA^*\star BB^*)u 
&=\sum_{i,j=1}^{N}\overline{u_i}u_j (AA^*\star BB^*)_{ij} = \sum_{i,j=1}^{N}\overline{u_i}u_j \langle B^* C_j \overline{A}, B^* C_i \overline{A} \rangle\\
& = \bigg\langle B^* \bigg{(}\sum_{j=1}^{N} {u_j}C_j \bigg) \overline{A}, B^* \bigg{(}\sum_{i=1}^{N}{u_i}C_i \bigg) \overline{A} \bigg\rangle = \langle T, T\rangle
\end{align*}
where $T:=B^* \big{(}\sum_{j=1}^{N}{u_j}C_j\big{)} \overline{A}$. Now, for $U:=\sum_{j=1}^{N}{u_j}C_j\in \mathcal{U}(\star)$, we have
\begin{align*}
\rk(B^*U\overline{A})&\leq
\min\big(\rk(B^*),\rk(U),\rk(A)\big)\\
&\leq \min\big(\rk(AA^*),\rk(BB^*),{\bf r}(\star)\big)=:r,    
\end{align*}
where ${\bf r}(\star):=\max\big\{\rk X: X\in \mathcal{U}(\star)\big\}$. Notice that the trace
\[
\tr(T) = \sum_{j=1}^{N} u_j \tr\big(B^* C_j\overline{A}\big) = \rho^* u
\]
where
\begin{align*}
\rho
&=\begin{pmatrix} \overline{\tr\big(B^*C_1\overline{A}\big)} & \cdots &\overline{\tr\big(B^*C_N\overline{A}\big)}\end{pmatrix}^T
=\begin{pmatrix} {\tr\big(B^T\overline{C_1}{A}\big)} & \cdots &{\tr\big(B^T\overline{C_N}{A}\big)}\end{pmatrix}^T\\
&=\begin{pmatrix} \big\langle AB^T,C_1^T\big\rangle & \cdots &\big\langle AB^T,C_N^T\big\rangle\end{pmatrix}^T
=\begin{pmatrix} \big\langle BA^T,C_1\big\rangle & \cdots &\big\langle BA^T,C_N\big\rangle\end{pmatrix}^T\\
&=\begin{pmatrix} \big\langle \mathrm{vec}(BA^T),v_1\big\rangle & \cdots &\big\langle \mathrm{vec}(BA^T),v_N\big\rangle\end{pmatrix}^T.
\end{align*}
We give a quick proof of the trace-rank inequality $|\tr(T)|^2\leq {\rk T} \, \langle T,T \rangle$. Let $P$ denote the orthogonal projection onto $\mathrm{im}(T)$ (equivalently,
onto $(\ker T^*)^\perp$). Then $PT=T$, and so using Cauchy--Schwarz we obtain
\[
|\tr(T)|=|\tr(PT)|=|\langle P,\,T^*\rangle| \le \sqrt{\langle P,P \rangle}\; \sqrt{\langle T^*,T^* \rangle}.
\]
Since $P$ is an orthogonal projection of rank $r$, $\langle P,P \rangle=\|P\|^2=r$, as desired. 

Thus, we obtain
\[
\begin{aligned}
u^*(AA^* \star BB^*)u &= \langle T,T\rangle \geq \frac{1}{r} \left|  \rho^* u \right|^2= \frac{1}{r}\,u^* \rho \rho^* u.
\end{aligned}
\]
This shows the inequality for rank one $\mathcal{Y}=\mathcal{Y}(\star)$. The proof for the higher rank cases follow from this and Proposition~\ref{T:prod-convex}.\medskip

\textbf{Part III. Optimality.} Fix $k\in\{1,\dots,\alpha\}$ and consider the rank-one component $\star_k$. Recall that
\begin{align*}
\mathcal{U}(k)
&:=\mathrm{span}\,\big\{\mathrm{vec}^{-1}(v_{i,k}):\,i=1,\dots,N\big\}
\subset \C^{n\times m},\\
\mbox{and}\qquad{\bf r}(k)&:=\max\big\{\rk X:\,X\in \mathcal{U}(k)\big\}.    
\end{align*}
Thus, there exists
\[
U_0\in\mathcal{U}(k)\setminus\{{\bf 0}_{n \times m}\}
\quad\text{with}\quad
\rk(U_0)={\bf r}(k).
\]
We show that the coefficient ${1}/{\min\bigl(\rk AA^*,\,\rk BB^*,\,{\bf r}(k)\bigr)}$ is optimal for the map $(AA^*,BB^*)\mapsto AA^*\star_k BB^*$ with $A\in\C^{m\times\beta}$ and $B\in\C^{n\times\beta}$, for the given (fixed) $\beta\ge 1$. Set
\[
r_0 \;:=\; \min\bigl(\beta,\,m,\,n,\,{\bf r}(k)\bigr).
\]
We will construct matrices $A\in\C^{m\times\beta}$, $B\in\C^{n\times\beta}$ such that $T:=B^* U_0 \overline{A}$ has exactly $r_0$ diagonal entries $1$, and all other entries $0$, yielding equality in the trace-rank inequality. This will imply that no better constant can hold uniformly for all $A,B$.\medskip

\textit{Step 1: SVD of $U_0$.} Take the singular value decomposition of $U_0$:
\[
U_0 = S\,\Sigma\,R^*,
\]
where $S\in\C^{n\times{\bf r}(k)}$ and $R\in\C^{m\times{\bf r}(k)}$ have orthonormal columns, and $\Sigma = \diag(\sigma_1,\dots,\sigma_{{\bf r}(k)})$ with $\sigma_i>0$. Write
\[
S = \bigl[S_{r_0}\ \ S'\bigr],\qquad
R = \bigl[R_{r_0}\ \ R'\bigr],
\]
where $S_{r_0}\in\C^{n\times r_0}$ and $R_{r_0}\in\C^{m\times r_0}$ collect the first $r_0$
left and the remaining (on the right) singular vectors, and similarly decompose
\[
\Sigma = \begin{pmatrix}
\Sigma_{r_0} & 0 \\
0 & \Sigma'
\end{pmatrix},
\qquad
\Sigma_{r_0}=\diag(\sigma_1,\dots,\sigma_{r_0})\in (0,\infty)^{r_0\times r_0}.
\]

\textit{Step 2: Choice of $A,B$, and the corresponding $T$.} Define matrices $\widetilde{A}\in\C^{m\times r_0}$ and $\widetilde{B}\in\C^{n\times r_0}$ by
\begin{align*}
\widetilde{B} := S_{r_0}\,\Sigma_{r_0}^{-1/2}\quad \mbox{and}\quad
\widetilde{A} := \overline{\,R_{r_0}\,\Sigma_{r_0}^{-1/2}}, 
\end{align*}
so that 
\[
\widetilde{B}^* = \Sigma_{r_0}^{-1/2} S_{r_0}^* \quad \mbox{and}\quad
\overline{\widetilde{A}} = R_{r_0}\,\Sigma_{r_0}^{-1/2}.    
\]
Using $S_{r_0}^*S=[I_{r_0}\ 0]$ and $R^*R_{r_0}=\begin{bmatrix}I_{r_0}\\0\end{bmatrix}$, we get,
\begin{align*}
\widetilde{B}^* U_0\,\overline{\widetilde{A}}=\Sigma_{r_0}^{-1/2}\,S_{r_0}^*\,U_0\,R_{r_0}\,\Sigma_{r_0}^{-1/2}&=\Sigma_{r_0}^{-1/2}\,S_{r_0}^*\,S\,\Sigma\,R^*R_{r_0}\,\Sigma_{r_0}^{-1/2} \\
&= \Sigma_{r_0}^{-1/2}\,\Sigma_{r_0}\,\Sigma_{r_0}^{-1/2}
= I_{r_0}.    
\end{align*}
Now embed $\widetilde{A},\widetilde{B}$ into $A,B$ having exactly $\beta$ columns by padding
with zero columns:
\[
A := \bigl[\ \widetilde{A}\ \ 0_{m\times(\beta-r_0)}\ \bigr] \in \C^{m\times\beta},
\qquad
B := \bigl[\ \widetilde{B}\ \ 0_{n\times(\beta-r_0)}\ \bigr] \in \C^{n\times\beta}.
\]
Then $\rk A = \rk \widetilde{A} = r_0$ and $\rk B = \rk \widetilde{B} = r_0$, and so
\[
\min\bigl(\rk AA^*,\,\rk BB^*,\,{\bf r}(k)\bigr)
= \min(r_0,r_0,{\bf r}(k)) = r_0.
\]
By construction of $A,B$, we have
\[
T
= B^* U_0\,\overline{A}
= \begin{bmatrix}
\widetilde{B}^* \\ 0
\end{bmatrix}
U_0
\Bigl[\ \overline{\widetilde{A}}\ \ 0\Bigr]
=
\begin{bmatrix}
\widetilde{B}^* U_0\,\overline{\widetilde{A}} & 0\\[2pt]
0 & 0
\end{bmatrix}
=
\begin{bmatrix}
I_{r_0} & 0\\[2pt]
0 & 0
\end{bmatrix}
\in\C^{\beta\times\beta}.
\]
Thus $\rk(T) = r_0,$ $\langle T,T\rangle = \tr(T^*T) = \tr(T) = r_0,$ and $|\tr(T)|^2 = r_0^2$. Hence the trace-rank inequality holds with equality $|\tr(T)|^2 = r_0\,\langle T,T\rangle$.\medskip

\textit{Step 3: Sharpness of the constant.} From the proof in Part II, for every $A,B,u$ we have
\[
u^*(AA^*\star_k BB^*)u
= \langle T,T\rangle
\;\ge\;
\frac{1}{r}\,|\tr(T)|^2
= \frac{1}{r}\,u^*\rho_k\rho_k^*u,
\]
where $r := \min\bigl(\rk AA^*,\,\rk BB^*,\,{\bf r}(k)\bigr)$. For the specific choice of $A,B,u$ above, we have $r=r_0$ and equality:
\[
u^*(AA^*\star_k BB^*)u
= \langle T,T\rangle
= \frac{1}{r_0}\,|\tr(T)|^2
= \frac{1}{r_0}\,u^*\rho_k\rho_k^*u.
\]
Thus the constant $1/r_0 = 1/\min\bigl(\rk AA^*,\,\rk BB^*,\,{\bf r}(k)\bigr)$ is attained. More precisely, if one were to replace $1/\min(\rk AA^*,\rk BB^*,{\bf r}(k))$ by any larger constant $c>0$ that is supposed to work for \emph{all} $A,B$, then applied to this particular choice we would obtain
\[
\langle T,T\rangle
= u^*(AA^*\star_k BB^*)u
\;\ge\;
c\,u^*\rho_k\rho_k^*u
= c\,|\tr(T)|^2
= c\,r_0^2.
\]
Since $\langle T,T\rangle = r_0$, this implies $r_0\ge c\,r_0^2$, i.e.
\(
c\le 1/r_0.
\)
Hence no $c>1/r_0$ can work uniformly. Therefore the coefficient ${1}/{\min\bigl(\rk AA^*,\,\rk BB^*,\,{\bf r}(k)\bigr)}$ in Theorem~\ref{T:Schur-ext}(b) is the best possible for the rank-one component $\star_k$.
\end{proof}

\subsection{Specific classical cases of \(\star\in \{\circ,\jury,\otimes\}\)} 

We now prove Theorem~\ref{T:schur,jury,kronecker} and Corollary~\ref{cor:special-cases}.

\begin{proof}[Proof of Theorem~\ref{T:schur,jury,kronecker}]
\textit{Hadamard product $\circ$.} Take $m=n=N$. Suppose $e_1,\dots,e_N$ denote the standard basis of $\C^{N}$. Now, take \(
Y_{ij}:=(e_i\otimes e_{i}) (e_{j}\otimes e_{j})^T,
\) where $\otimes$ denotes the standard Kronecker product. Clearly $\mathcal{Y}$ is positive semidefinite of rank-one. Compute 
\begin{align*}
(A \star B)_{ij} 
&= \tr((A\otimes B) Y_{ij}^*)
= \tr((A\otimes B) (e_j\otimes e_{j}) (e_{i}\otimes e_{i})^T)\\
&=(e_{i}\otimes e_{i})^T (A\otimes B) (e_j\otimes e_{j}) 
=  (e_i^T A e_j) (e_i^T B e_j) 
= a_{ij}b_{ij},
\end{align*}
for all $A=(a_{ij})$ and $B=(b_{ij})\in \C^{N\times N}$, showing that $\circ \in \prd(N,N;N)$.\medskip

\textit{Kronecker product $\otimes$.} Take $m,n\geq 1$ and $N=mn$. Let $e_1,\dots,e_m$ and $f_1,\dots,f_n$ be the standard basis of $\C^m$ and $\C^n$ respectively. Now order the basis elements $e_{i_1}\otimes f_{i_2}$ lexicographically in $(i_1,i_2)$, and take 
\[
Y_{n(i_1-1)+i_2,n(j_1-1)+j_2}:=(e_{i_1}\otimes f_{i_2})(e_{j_1}\otimes f_{j_2})^T
\] 
for all $1\leq i_1,j_1\leq m$ and $1\leq i_2,j_2\leq n$. Clearly $\mathcal{Y}$ is positive semidefinite of rank-one. For $A=(a_{ij})\in \C^{m\times m}$ and $B=(b_{ij})\in \C^{n\times n}$, compute
\begin{align*}
(A \star B)_{n(i_1-1)+i_2,n(j_1-1)+j_2} 
&= \tr((A\otimes B) Y_{n(i_1-1)+i_2,n(j_1-1)+j_2}^*)\\
= \tr((A\otimes B) (e_{j_1}\otimes f_{j_2}) (e_{i_1}\otimes f_{i_2})^T)
&=(e_{i_1}\otimes f_{i_2})^T (A\otimes B) (e_{j_1}\otimes f_{j_2}) \\
=  (e_{i_1}^T A e_{j_1}) (f_{i_2}^T B f_{j_2}) 
&= a_{i_1,j_1}b_{i_2,j_2}.
\end{align*}
Thus $\otimes \in \prd(m,n;mn)$.\medskip

\textit{Convolution $\jury$.} Take $m=n=N$. Suppose $e_1,\dots,e_N$ is the standard basis of $\C^N$. The following yields the rank-one positive semidefinite $\mathcal{Y}(\jury)$:
\[
Y_{ij}:=\bigg[\sum_{k=1}^{i}e_{k}\otimes e_{i-k+1} \bigg] \bigg[\sum_{k=1}^{j}e_{k}\otimes e_{j-k+1} \bigg]^T.
\]
For $A=(a_{ij}),B=(b_{ij})\in \C^{N\times N}$, compute
\begin{align*}
(A \star B)_{ij} 
&= \tr((A\otimes B) Y_{ij}^*)\\
&= \tr\Big((A\otimes B) \Big[\sum_{k=1}^{j}e_{k}\otimes e_{j-k+1} \Big] \Big[\sum_{k=1}^{i}e_{k}\otimes e_{i-k+1} \Big]^T\Big)\\
&=\Big[\sum_{k=1}^{i}e_{k}\otimes e_{i-k+1} \Big]^T(A\otimes B) \Big[\sum_{k=1}^{j}e_{k}\otimes e_{j-k+1} \Big].
\end{align*}
Upon distributing the transpose and cross-multiplying, the above equals:
\begin{align*}
\sum_{k=1}^{i}\sum_{l=1}^{j} (e_k^T\otimes e_{i-k+1}^T) (A\otimes B) (e_{l}\otimes e_{j-l+1})
&=\sum_{k=1}^{i}\sum_{l=1}^{j} (e_k^T Ae_l) (e_{i-k+1}^T B e_{j-l+1})\\
= \sum_{k=1}^{i}\sum_{l=1}^{j} a_{k,l} b_{i-k+1,j-l+1} 
&= (A\jury B)_{i,j}.
\end{align*}
Thus $\jury \in \prd(N,N;N)$.
\end{proof}

We are now ready to prove Corollary~\ref{cor:special-cases}.

\begin{proof}[Proof of Corollary~\ref{cor:special-cases}]
Theorem~\ref{T:schur,jury,kronecker} shows that Theorem~\ref{T:schur+jury} is a special case. To address the remaining cases, using the previous proof of Theorem~\ref{T:schur,jury,kronecker}, we compute $\rho_{\star}=\begin{pmatrix}\rho_1,\dots,\rho_N\end{pmatrix}^T\in \C^{N}$ for $\star\in \{\circ,\jury,\otimes\}$.\medskip

\noindent\textit{Hadamard product $\circ$.} We have $m=n=N$, and $Y_{ij} = (e_i\otimes e_i)(e_j\otimes e_j)^T$. Thus, $v_i = e_i\otimes e_i$. For the column-stacking isometry $\mathrm{vec}:\C^{N\times N}\to \C^{N^2}$, for $A,B\in \C^{N\times \beta}$,
\begin{align*}
\rho_i 
&= \langle \mathrm{vec}(BA^T), v_i \rangle = \langle \mathrm{vec}(BA^T), e_i \otimes e_i \rangle = (BA^T)_{ii}=(AB^T)_{ii}. 
\end{align*}
This yields Theorem~\ref{khare2021sharp}.\medskip

\noindent\textit{Kronecker product $\otimes$.} We have $m,n\geq 1$ and $N=mn$. Recall that $Y_{n(i_1-1)+i_2,n(j_1-1)+j_2}:=(e_{i_1}\otimes f_{i_2})(e_{j_1}\otimes f_{j_2})^T$. Thus, $v_{n(i_1-1)+i_2}
= e_{i_1}\otimes f_{i_2}$ for all $1\leq i_1,j_1\leq m$ and $1\leq i_2,j_2\leq n$, where $(i_1,i_2)$ are ordered lexicographically. Consequently, using the column-stacking vectorization $\mathrm{vec}:\C^{n\times m} \to \C^{mn}$, for $A\in \C^{m\times \beta}$ and $B\in \C^{n\times \beta}$, we have
\begin{align*}
\rho_{\,n(i_1-1)+i_2} 
= \langle \mathrm{vec}(BA^T), e_{i_1} \otimes f_{i_2} \rangle
= (BA^T)_{i_2,i_1}.
\end{align*}
Therefore, since $(i_1,i_2)$ is ordered lexicographically, we have
\[
\rho = \mathrm{vec}(BA^T).
\]

\noindent\textit{Convolution $\jury$.} We have $m=n=N$, and from the proof of Theorem~\ref{T:schur,jury,kronecker}, 
\[
Y_{ij}
:= \bigg[\sum_{k=1}^{i} e_k\otimes e_{i-k+1}\bigg]
   \bigg[\sum_{l=1}^{j} e_l\otimes e_{j-l+1}\bigg]^T.
\]
Thus, $v_i = \sum_{k=1}^{i} e_k\otimes e_{i-k+1}$. Via the column-stacking $\mathrm{vec}:\C^{n\times m}\to \C^{mn}$, we have the desired
\begin{align*}
\rho_i
= \langle \mathrm{vec}(BA^T), v_i \rangle 
&= \sum_{k=1}^{i}\langle \mathrm{vec}(BA^T), e_k \otimes e_{i-k+1} \rangle = \sum_{k=1}^{i} (BA^T)_{i-k+1,k}.
\end{align*}
It is easy to see that ${\bf r}(\star)\geq \rk AA^*, \rk BB^*$ in all cases of $\star\in \{\circ,\otimes,\jury\}$, and so ${\bf r}(\star)$ does not appear in the final inequality. These computations prove Theorems~\ref{T:kronecker-ext} and \ref{T:jury-ext}.
\end{proof}

\section{Qualitative analysis over Hilbert spaces}\label{S:Hilbert}

\subsection{Preliminaries}\label{S:Hilbert-prelim} We begin by recalling several standard concepts.\medskip

Throughout the remainder of this paper, all Hilbert spaces are assumed to be complex. For a Hilbert space $H$, we write $\langle \cdot,\cdot\rangle_{_H}$ for
its inner product, which is linear in the first variable and conjugate-linear in the second. The subscript is omitted if the space is clear from the context. 

Let $H,K$ by Hilbert spaces. We denote by $\Bou_1(H,K) \subset\Bou_2(H,K) \subset \Bou(H,K)$ the trace class, the Hilbert--Schmidt, and the bounded linear operators $:H\to K$, respectively. We write
$\Bou_{\mu}(H):=\Bou_{\mu}(H,H)$ for each $\Bou_\mu\in\{\Bou_1,\Bou_2,\Bou\}$. We use $\lVert\cdot\rVert$, $\lVert\cdot\rVert_2$ (or sometimes
$\lVert\cdot\rVert_{_{\Bou_2(H,K)}}$ for more clarity), and $\lVert\cdot\rVert_1$ to denote the operator norm on $\Bou(H,K)$, the Hilbert--Schmidt norm on $\Bou_2(H,K)$, and the trace norm on $\Bou_1(H,K)$, respectively \cite{conway2000course,kadison1997fundamentals,weidmann1980linear}. For the trace-class operators $:H\to H$, we write $\Tr_{_H}$ for the trace. When the context is clear, the subscripts are omitted. We denote the adjoint of $Q\in \Bou(K,H)$ by $Q^*\in \Bou(H,K)$, so that
\[
\langle Qx,y\rangle_{_{H}} = \langle x,Q^*y\rangle_{_{K}} \qquad \forall x\in K,~y\in H.
\]\

\textit{Rank-one operators.} For $y \in H$ and $x \in K$, we denote by $\ro_{x,y}$ the rank-one operator $\ro_{x,y}: H\to K$ defined by
\[
\ro_{x,y}w := \langle w,y\rangle_{_{H}}\, x \qquad  \forall w\in H.
\]
Thus, the map $(x,y)\mapsto\ro_{x,y}$ is linear in $x$ and conjugate linear in $y$. For further properties, we refer to {\cite[Proposition 16.3]{conway2000course}}.\medskip

\textit{Tensor product of Hilbert spaces and operators.} The algebraic tensor product of $H$ and $K$ is denoted by $H\otimes_{\mathrm{alg}} K$, i.e., the vector space generated by elementary tensors $x\otimes y$, modulo the usual bilinearity
relations, equipped with the sesquilinear form
\[
\langle x_1\otimes y_1,\;x_2\otimes y_2\rangle_{_0}
:= \langle x_1,x_2\rangle_{_{H}}\, \langle y_1,y_2\rangle_{_{K}},
\]
on simple tensors and extended  sesquilinearly \cite[Remark 2.6.7]{kadison1997fundamentals}. The Hilbert tensor product $H\otimes K$ is defined to be its completion.
We write $\langle\cdot,\cdot\rangle_{_{H\otimes K}}$ for the corresponding completed inner product.

The tensor product of $P\in \Bou(H),Q\in \Bou(K)$, on simple tensors is given by
\[
(P\otimes Q)(x\otimes y):=(Px) \otimes (Qy).
\]
This defines a unique bounded linear map from $H \otimes K$ into itself \cite[Proposition 2.6.12]{kadison1997fundamentals}.

\textit{Conjugate Hilbert space.} 
Given a Hilbert space \(\bigl(H,\langle\cdot,\cdot\rangle_{_H}\bigr)\), we denote its conjugate \(\bigl(H^{\#},\langle\cdot,\cdot\rangle_{_{H^{\#}}}\bigr)\) to be the isometric copy of $H$ with the same addition as in $H$, but with scalar multiplication $\ast$ and inner product on \(H^{\#}\) defined by:
\[
\lambda\ast y := \overline{\lambda}\,y \qquad \mbox{and}\qquad
\langle x,y\rangle_{_{H^{\#}}} := \langle y,x\rangle_{_{H}} \qquad \forall\lambda\in\C,~x,y\in H^{\#}.
\]
With these definitions, \(\langle\cdot,\cdot\rangle_{_{H^\#}}\) is an inner product on \(H^\#\) that is linear in the first variable and conjugate-linear in the second {\cite[p. 131]{kadison1997fundamentals}}.\medskip

\textit{Vectorization isometry.} The space $\Bou_2(H^{\#},K)$ is equipped with the Hilbert--Schmidt inner product
\[
\langle X,Y\rangle_{_{\Bou_2(H^{\#},K)}}
:= \Tr_{_{K}}(X Y^{*})= \Tr_{_{H^\#}}(Y^{*}X)
\qquad \forall X,Y\in \Bou_2(H^{\#}, K).
\]
There exists an isometry between $\Bou_2(H^{\#},K)$ and $H\otimes K$:
\[
\mathrm{vec}:\Bou_2(H^{\#},K)\longrightarrow H\otimes K  \qquad \mbox{defined by} \qquad \mathrm{vec}(\ro_{y, \overline{x}}):= x\otimes y
\]
on rank-one operators $\ro_{y,\overline{x}}:H^\# \to K$, extended linearly and continuously \cite[Proposition 2.6.9] {kadison1997fundamentals}.\medskip

\textit{Conjugation isometry.} The conjugation isometry is given by
\[
\conj_H : H \longrightarrow H^{\#} \qquad\mbox{defined by}
\qquad y \longmapsto \overline{y},
\]
where \(\overline{y}\) denotes the same vector \(y\), viewed as an element of \(H^{\#}\) (with its modified scalar multiplication and inner product).  
Since \(H\) and \(H^\#\) have the same underlying additive structure, \(\conj_H\) is the identity map on the additive group and is therefore automatically additive. 
Moreover, for $\lambda\in\C,x\in H$,
\begin{align*}
\conj_H(\lambda x)
   = \overline{\lambda x}
   = \lambda x
   = \overline{\lambda}\ast x
   = \overline{\lambda}\ast \overline{x}
   = \overline{\lambda}\ast \conj_H(x),    
\end{align*}
where \(\overline{\lambda x} = \lambda x\) because \(\conj_H\) is the identity on the underlying set. Thus \(\conj_H(\lambda x) = \overline{\lambda}\ast \conj_H(x)\) and so \(\conj_H\) is conjugate linear. It is clear that \(\conj_H\) is an isometry. One may refer to {\cite[Corollary 2.3.2]{kadison1997fundamentals}} for more details on this operator defined from $H$ into its Banach dual.
\medskip

\textit{Transposition operators.} Given $Q\in \Bou(K,H)$, we define a corresponding transposition operator
\[
Q^\#:H^\# \longrightarrow K^\#  \qquad \mbox{given by}\qquad Q^\#:= \conj_{K}\, Q^*\, \conj_{H}^{-1}. 
\]
Then $Q^{\#}$ is bounded. See Lemma~\ref{lemma:transposition} and {\cite[p. 102]{kadison1997fundamentals}} for further properties of this operation.\medskip

\textit{Rank of bounded linear operators.}
The rank of $P\in \Bou(K,H)$, denoted by $\rk P$, is the dimension of the closed range of $P$. Thus $\rk P = \rk P^* = \rk (PP^*) = \rk P^\#$.\medskip

\textit{Positive operators and Loewner order $\succcurlyeq $.} An operator $P\in \Bou(H)$ is said to be positive if $\langle Px,x\rangle \geq 0$ for all $x\in H$ {\cite[\S 3]{conway2000course}}. These operators are self-adjoint. We say that $P \succcurlyeq Q$ for self-adjoint operators $P,Q\in \Bou(H)$ if $P-Q$ is positive. The resulting ordering is called the Loewner order. 

\subsection{Main results over Hilbert spaces} We are now ready to discuss the construction of positive bilinear products over Hilbert spaces.

\begin{defn}\label{defn:prd-hil}
Suppose $H_1,H_2,K$ are given Hilbert spaces. Define 
\[
H=H_1\otimes H_2 \qquad \mbox{and}\qquad  \Hil=K\otimes H.
\]
Fix $\mu\in \{1,2\}$ and let $T\in \Bou_\mu(\Hil)$.
\begin{enumerate}[$(1)$]
\item For an orthonormal basis $\{e_i\}_{i\in I}$ of $K$, consider ``slices'' $T_{ij}:=T_{e_i,e_j}$ of $T$ defined by:
\[
\langle T_{ij}\eta,\, \xi\rangle_{_{H}}:=\langle T(e_j\otimes \eta),\,e_i\otimes \xi\rangle_{_{\Hil}}
\qquad(\xi,\eta\in H).
\]
Then each $T_{ij}\in \Bou_\mu(H)$. (Proposition~\ref{prop:slicing+stitching})
\item Using the slices, define a bilinear product $\star=\star(T)$:
\[
\star:\Bou_2(H_1)\times \Bou_2(H_2) \longrightarrow \Bou(K)
\] 
where, for $P\in \Bou_2(H_1)$ and $Q\in \Bou_2(H_2)$, we define $P\star Q\in \Bou(K)$ by
\begin{align*}
\langle P\star Q e_j,e_i\rangle_{_{K}} :=\langle P\otimes Q, T_{ij}\rangle_{_{\Bou_2(H)}}. 
\end{align*}
Since both $P\otimes Q$ and $T_{ij}\in \Bou_2(H)$, the above is well-defined.
\end{enumerate}
Now, pick the products given by positivity:
\begin{align*}
&\prd(H_1,H_2;K;\mu;\{e_i\}_{i\in I}):=\\
& \big\{\star=\star(T):\Bou_2(H_1)\times \Bou_2(H_2) \longrightarrow \Bou(K) ~\big| ~T\in \Bou_\mu(H) \mbox{ is positive}\big\}.
\end{align*}
If the basis remains the same in a discussion, then we use $\prd(H_1,H_2;K;\mu)$. Similarly, we write $\prd(\mu)$ if $H_1,H_2,K$ remain the same as well.
\end{defn}

The following is the main result.

\begin{utheorem}\label{T:Schur-ext-Hil}
Following the notation in Definition~\ref{defn:prd-hil}, let $H_1,H_2,K$ be Hilbert spaces, and let $\{e_i\}_{i\in I}$ be an orthonormal basis of $K$. Then the following holds.
\begin{enumerate}[$(a)$]
    \item If $\star=\star(T)\in \prd(\mu)$ for $T\in \Bou_\mu(\Hil)$ positive, then for all $P\in \Bou_2(H_1)$ and $Q\in \Bou_2(H_2)$, the operator $P\star Q \in \Bou_\mu(K)$ and satisfies
\[
\|P\star Q\|_{\mu}\; \leq\;   \|P\|_{2}\,\|T\|_{\mu}\,\|Q\|_{2} \qquad (\mu=1,2).
\]
Furthermore,
\[
P \; \succcurlyeq \; 0 \; \mbox{ and }\; Q \; \succcurlyeq \; 0 \quad \implies \quad P\star Q\;\succcurlyeq\; 0.
\]

\item More strongly, if $T$ is nonzero positive trace-class, then for all nonzero $A\in \Bou_2({\widetilde\Hil},H_1)$ and $B\in \Bou_2({\widetilde\Hil}^{\#},H_2)$, where $\widetilde{\Hil}$ is a given Hilbert space, we have the following trace-class lower bound in the Loewner order $\succcurlyeq$:
\begin{align*}
AA^*\star BB^* \;  \succcurlyeq \; \sum_{n=1}^{\infty} \frac{\lambda_n}{\min\big(\rk AA^*,\,\rk BB^*,\,{\bf r}(n)\big)} \; \ro_{\rho_n,\rho_n} \;\succcurlyeq \; 0,
\end{align*}
where we follow the convention $\frac{1}{\infty}:=0$.
\end{enumerate}
The sequence of triples $\big((\lambda_n,{\bf r}(n),\rho_{n})\big)_{n\geq 1}$ depends on $A,B$ and $T$ via the following spectral resolution into positive rank-one orthogonal operators:
\begin{align*}
T=\sum_{n=1}^{\infty} \lambda_n \ro_{w_n,w_n} \quad \mbox{where }  w_n = \sum_{i\in I} e_i \otimes u_i^{(n)} \quad \mbox{with }  u_i^{(n)}\in H_1\otimes H_2.
\end{align*}
Using the vectorization and transposition, we define the following for $n\geq 1$:
\begin{align*}
    \rho_n &\; := \; \sum_{i\in I} \; \langle \, \mathrm{vec}(BA^\#),u_i^{(n)}\,\rangle_{_{H_1\otimes H_2}}\; e_i \; \in \; K,\\
    \mbox{and}\qquad {\bf r} (n)&\; :=\; \sup \big\{\rk X: X \,\in\, \mathcal{U}(n)\big\},\\
    \mbox{where}\qquad \mathcal{U}(n)&\; :=\; \overline{\mathrm{span}}^{\Bou_2}\,\big\{\mathrm{vec}^{-1}(u_i^{(n)}):i\,\in\, I\big\} \; \subset \; \Bou_2(H_1^{\#},H_2).
\end{align*}
\end{utheorem}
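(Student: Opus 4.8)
The plan is to follow the three‑part architecture of the proof of Theorem~\ref{T:Schur-ext}---positivity via a Gram‑type factorization, the Loewner lower bound via a trace--rank inequality, and reduction from higher rank to a rank‑one parameter---but now transported through the vectorization and transposition isometries of Section~\ref{S:Hilbert-prelim} and with the finite sums replaced by spectral series. First I would fix the spectral resolution $T=\sum_{n\ge1}\lambda_n\,\ro_{w_n,w_n}$ with $\lambda_n>0$, $\{w_n\}$ orthonormal, and $w_n=\sum_{i\in I}e_i\otimes u_i^{(n)}$ as in the statement (so $\sum_n\lambda_n=\|T\|_1<\infty$ and $\sum_n\lambda_n^2=\|T\|_2^2<\infty$). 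Since the eigenvalues are real and $T\mapsto\star(T)$ is conjugate‑linear and bounded, $P\star Q=\sum_n\lambda_n\,(P\star_n Q)$ with $\star_n:=\star(\ro_{w_n,w_n})$, so everything reduces to a single rank‑one parameter $T=\ro_{w,w}$, $w=\sum_i e_i\otimes u_i$. To such a $w$ I associate the Hilbert--Schmidt operator $W\colon K\to H_1\otimes H_2$ with $We_i=u_i$ (so $\|W\|_2=\|w\|$); a short computation from Definition~\ref{defn:prd-hil} identifies the slices as $T_{ij}=\ro_{u_i,u_j}$ and produces the compact formula
\[
P\star Q \;=\; W^{*}\,(P\otimes Q)\,W, \qquad P\in\Bou_2(H_1),\ Q\in\Bou_2(H_2),
\]
which carries most of part~(a).

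For part~(a) I would read the norm bounds off this formula: $\|P\star Q\|_1\le\|W\|\,\|(P\otimes Q)W\|_1\le\|W\|\,\|P\otimes Q\|_2\,\|W\|_2\le\|w\|^2\|P\|_2\|Q\|_2$, and summing over the spectral components (with $\sum_n\lambda_n\|w_n\|^2=\|T\|_1$) gives $P\star Q\in\Bou_1(K)$ with $\|P\star Q\|_1\le\|P\|_2\|T\|_1\|Q\|_2$; for $\mu=2$ I would instead estimate $|\langle P\star Q\,e_j,e_i\rangle|=|\langle P\otimes Q,T_{ij}\rangle_{\Bou_2(H)}|\le\|P\|_2\|Q\|_2\|T_{ij}\|_2$ and use the orthonormality of the $w_n$ to evaluate $\sum_{i,j}\|T_{ij}\|_2^2=\sum_n\lambda_n^2=\|T\|_2^2$, whence $P\star Q\in\Bou_2(K)$ with the asserted bound. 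Positivity is then the Gram‑type observation that for $P,Q\succcurlyeq0$ each $P\star_n Q=\big((P\otimes Q)^{1/2}W_n\big)^{*}\big((P\otimes Q)^{1/2}W_n\big)\succcurlyeq0$, so the (trace‑norm convergent) sum $P\star Q$ is positive.

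For part~(b), with $T=\ro_{w,w}$ again, write $u_i=\mathrm{vec}(C_i)$, $C_i\in\Bou_2(H_1^{\#},H_2)$, so $\mathcal{U}=\overline{\mathrm{span}}^{\Bou_2}\{C_i\}$. The key technical input is the Hilbert vectorization identity $(A^{*}\otimes B^{*})\,\mathrm{vec}(C)=\mathrm{vec}\big(B^{*}\,C\,(A^{\#})^{*}\big)$, which I would verify on rank‑one operators $C=\ro_{y,\overline{x}}$ from the definitions of $\mathrm{vec}$, $\conj$, and $\#$, using the identity $(A^{\#})^{*}=(A^{*})^{\#}$ (this is exactly where the conjugate‑space bookkeeping recorded in Lemma~\ref{lemma:transposition} is needed), and then extend by continuity. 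Combining it with $AA^{*}\otimes BB^{*}=(A\otimes B)(A^{*}\otimes B^{*})$ and the fact that $\mathrm{vec}$ is an isometry yields, for finitely supported $c=(c_i)_{i\in I}\in K$,
\[
\big\langle (AA^{*}\star BB^{*})c,\,c\big\rangle_{K}=\langle T_c,T_c\rangle_{\Bou_2}=\|T_c\|_2^2,\qquad T_c:=B^{*}U_c(A^{\#})^{*},\quad U_c:=\textstyle\sum_i c_iC_i\in\mathcal{U},
\]
with $T_c$ trace class since $B^{*},U_c,(A^{\#})^{*}$ are Hilbert--Schmidt. Using $\rk P=\rk P^{*}=\rk PP^{*}=\rk P^{\#}$, $\rk(XY)\le\min(\rk X,\rk Y)$, and $\rk U_c\le{\bf r}=\sup\{\rk X:X\in\mathcal{U}\}$, one gets $\rk T_c\le\min(\rk AA^{*},\rk BB^{*},{\bf r})=:r$ uniformly in $c$. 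If $r<\infty$, the trace--rank inequality $|\Tr S|^2\le\rk S\cdot\|S\|_2^2$ (proved exactly as in Theorem~\ref{T:Schur-ext}: project onto $\overline{\mathrm{im}\,S}$, apply Cauchy--Schwarz in $\Bou_2$) applied to $S=T_c$, together with the cyclicity computation $\Tr T_c=\langle c,\rho\rangle_K$ (with $\rho$ built from $\mathrm{vec}(BA^{\#})$ as in the statement), gives $\|T_c\|_2^2\ge\frac1r|\Tr T_c|^2=\frac1r\langle\ro_{\rho,\rho}c,c\rangle_K$; since both sides are continuous quadratic forms and the finitely supported $c$ are dense in $K$, this promotes to $AA^{*}\star BB^{*}\succcurlyeq\frac1r\,\ro_{\rho,\rho}$, while for $r=\infty$ the same holds by the convention $\frac1\infty=0$ and the positivity from part~(a). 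Finally I would reinstate the index $n$ ($\rho\to\rho_n$, ${\bf r}\to{\bf r}(n)$), check that $\sum_n\frac{\lambda_n}{\min(\rk AA^{*},\rk BB^{*},{\bf r}(n))}\,\ro_{\rho_n,\rho_n}$ converges in $\Bou_1(K)$ (using $\|\rho_n\|^2\le\|\mathrm{vec}(BA^{\#})\|^2$, which follows from $\sum_i\|u_i^{(n)}\|^2=1$, and $\sum_n\lambda_n<\infty$), and pass the per‑component inequalities $AA^{*}\star_n BB^{*}-\tfrac{\lambda_n}{\min(\cdots)}\ro_{\rho_n,\rho_n}\succcurlyeq0$ to the limit using closedness of the Loewner order under operator‑norm limits; the trailing $\succcurlyeq0$ is immediate since every $\ro_{\rho_n,\rho_n}\succcurlyeq0$.

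I expect the main obstacle to be analytic rather than conceptual: establishing the vectorization and transposition identities on the conjugate Hilbert spaces (in particular $(A^{\#})^{*}=(A^{*})^{\#}$ and the precise shape $\mathrm{vec}(B^{*}C(A^{\#})^{*})$), verifying that every object lands in the operator ideal it needs to---$\Bou_1$ versus $\Bou_2$ versus $\Bou$---so that $\Tr$, the Hilbert--Schmidt inner product, $\mathrm{vec}$, and $\#$ are all legitimate, confirming that $P\star Q$ is a genuine element of $\Bou_\mu(K)$ with the stated norm (not merely a formal array of matrix entries), and controlling the convergence of the spectral series tightly enough to move the finite‑rank Loewner inequalities past the limit. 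None of these steps introduces a new idea beyond the Euclidean proof of Theorem~\ref{T:Schur-ext}, but each relies essentially on the infrastructure recalled in Section~\ref{S:Hilbert-prelim} (trace‑class and Hilbert--Schmidt ideals, the conjugate space $H^{\#}$ with its isometry $\conj_H$, the transposition $Q^{\#}$, and the vectorization isometry).
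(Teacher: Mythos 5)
Your proposal tracks the paper's proof closely: the slicing identity $P\star Q=W^*(P\otimes Q)W$ for rank-one $T=\ro_{w,w}$ (Lemma~\ref{prop:rank-one-slicing}), the vectorization identity $(A^*\otimes B^*)\mathrm{vec}(C)=\mathrm{vec}(B^*C(A^\#)^*)$ (Lemma~\ref{lemma:vec-identity}), the trace--rank inequality applied to $T_c=B^*U_c(A^\#)^*$, the trace-class estimate for the lower bound, and the passage to the limit via operator-norm closedness of the positive cone. This is exactly the architecture of Lemma~\ref{lemma:trace-case}, Theorem~\ref{T:thmC-for-rank1-fixed}, and the final assembly in the paper.

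One genuine gap: your positivity argument in part~(a) reduces to the spectral series $P\star Q=\sum_n\lambda_n(P\star_n Q)$ and invokes trace-norm convergence, which is only available for $\mu=1$. When $T$ is merely Hilbert--Schmidt one has $\sum_n\lambda_n^2<\infty$ but not $\sum_n\lambda_n<\infty$, and each $\|P\star_n Q\|_1$ is controlled only by $\|P\|_2\|Q\|_2\|w_n\|^2=\|P\|_2\|Q\|_2$ (not by anything summable against $\lambda_n$). So the series has no reason to converge in $\|\cdot\|_1$, and your positivity claim as stated tacitly covers only $\mu=1$. You already sidestep this issue for the $\mu=2$ norm bound by computing $\sum_{i,j}\|T_{ij}\|_2^2$ directly; the positivity needs a parallel fix. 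Either observe that the slicing commutes with $\Bou_2$-convergence (Proposition~\ref{prop:slicing+stitching}), so for each finitely supported $x$ the scalar $\langle(P\star Q)x,x\rangle$ is a convergent sum of nonnegative contributions $\lambda_n\langle(P\star_n Q)x,x\rangle$; or do what the paper actually does in Part~II of its proof and bypass the spectral decomposition entirely, noting that $\sum_{i,j}\overline{x_i}x_j T_{ij}=T_{x,x}=J_x^*TJ_x\succcurlyeq 0$ directly from $T\succcurlyeq 0$, whence $\langle(P\star Q)x,x\rangle=\Tr\bigl(T_{x,x}^{1/2}(P\otimes Q)T_{x,x}^{1/2}\bigr)\ge 0$. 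The latter is cleaner, being uniform in $\mu$ and independent of the spectral resolution. (A second, cosmetic, point: your evaluation $\sum_{i,j}\|T_{ij}\|_2^2=\sum_n\lambda_n^2$ via orthonormality requires a Fubini-type interchange of several infinite sums; the paper's Parseval argument $\|T\|_2^2=\sum_{i,j}\|T_{ij}\|_2^2$ over the basis $\{e_i\otimes f_\alpha\}$ is tighter.)
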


\begin{remark}
Using Theorem~\ref{T:Schur-ext-Hil}, we will later prove Theorem~\ref{T:Hilbert-tensor}.
\end{remark}

\begin{remark}[Canonical formulation]
While the aforementioned basis-dependent approach is essential for the analysis, we emphasize that the resulting constructions are ultimately canonical: in Section~\ref{S:canonical-form} we present this for all rank-one admissible operators; see Corollary~\ref{cor:canonical}.
\end{remark}

\begin{remark}
Several features of Theorem~\ref{T:Schur-ext-Hil}, which appear automatic in Theorem~\ref{T:Schur-ext}, require explicit reformulation in the operator-theoretic setting. We briefly highlight two such points.\medskip

\textit{Vectorization.}
In Theorem~\ref{T:Schur-ext}, the column-stacking vectorization from $\C^{n\times m}\to\C^{mn}$, which is often treated as a coordinate-level convention, tacitly identifies $\C^{m}$ with its dual and suppresses the role of conjugation. In the Hilbert space setting, however, this identification can no longer be made implicitly. The correct operator theoretic analogue of $\C^{n\times m}$ is the Hilbert space $\Bou_2(H_1^{\#},H_2)$, and the canonical isometry
$\mathrm{vec}:\Bou_2(H_1^{\#},H_2)\to H_1\otimes H_2$ provides the corresponding vectorization.
\medskip

\textit{Transposition.} In the finite-dimensional Theorem~\ref{T:Schur-ext}, the matrix transpose $A^{T}$ appears in vectorization through $\mathrm{vec}(BA^{T})$. As discussed above, once vectorization is interpreted as $\mathrm{vec}:\Bou_2(H_1^{\#},H_2)\to H_1\otimes H_2$, the definition of $\mathrm{vec}$ itself necessitates operators acting on the conjugate space $H_1^{\#}$. Thus, in Theorem~\ref{T:Schur-ext-Hil} the finite-dimensional $A^{T}$ is replaced by the canonical $A^{\#}$.
\end{remark}

\section{Proofs over Hilbert spaces}\label{S:Hilbert-proofs}

\subsection{Slicing and stitching}
In this subsection, we formalize the procedure of extracting submatrices in the Hilbert space setting, a process we refer to as \emph{slicing} an operator. Having identified these slices, we then describe a method for reconstructing the original operator from them, which we call \emph{stitching}. While the stitching part is not needed later in the paper, we include it for completeness.

\begin{prop}\label{prop:slicing+stitching}
Suppose $H_1,H_2,K$ are given Hilbert spaces, and let $H:=H_1\otimes H_2$ and $\Hil:=K\otimes H$. Fix an operator class $\Bou_\mu\in\{\Bou_1, \Bou_2, \Bou\}$. Let $T\in \Bou_\mu(\Hil)$, and for $u,v\in K$, define $T_{u,v}:H\to H$ via 
\[
\langle T_{u,v}\eta,\, \xi\rangle_{_H}
:= \langle T(v\otimes \eta),\,u\otimes \xi\rangle_{_{\Hil}}
\qquad (\xi,\eta\in H).
\]
Then the following holds.
\begin{enumerate}[1.]
\item \emph{(Slicing.)} For any $u, v \in K$, we have $T_{u,v} \in \Bou_\mu(H)$ and 
\[
\|T_{u,v}\|_{\mu}\le \|T\|_{\mu}\,\|u\|\,\|v\|.
\] 
The map $(u,v)\mapsto T_{u,v}$ is conjugate-linear in $u$, linear in $v$,
and jointly bounded $:K\times K\to \Bou_\mu(H)$. Moreover, if $\big(T^{(n)}\big)_{n\geq 1}\subset \Bou_{\mu}(\Hil)$ and $T=\sum_{n=1}^{\infty}T^{(n)}$ converges in $\Bou_{\mu}(\Hil)$, then for all $u,v\in K$,
\[
T_{u,v}=\sum_{n=1}^{\infty} T^{(n)}_{u,v} \quad \mbox{convergence in $\Bou_{\mu}(H)$}.
\]
\item \emph{(Stitching.)} For an orthonormal basis $\{e_i\}_{i\in I}$ of $K$ suppose $T_{ij}:=T_{e_i,e_j}$. Then $T\in \Bou_\mu(\Hil)$ can be re-constructed from the $T_{ij}$s as follows. Define $Q$ on finite sums:
\[
Q \Big[\sum_{j\in F} e_j\otimes y_j\Big]
:= \sum_{i\in I} e_i\otimes \Big[\sum_{j\in F} T_{ij}(y_j)\Big],
\qquad F \subset I \text{ finite},~ y_j\in H.
\]
Then $Q$ extends to a bounded linear operator on $\Hil$, and the extended $Q=T$. In particular $Q=T\in\mathcal{B}_{\mu}(\Hil)$ and $\|Q\|_{\mu}=\|T\|_{\mu}$ for the chosen operator class $\Bou_{\mu}\in \{\Bou,\Bou_{1},\Bou_2\}$.
\end{enumerate}
\end{prop}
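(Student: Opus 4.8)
The plan is to express each slice as a two-sided compression of $T$ by fixed ``inclusion'' operators and read off every claimed property from the ideal behaviour of the Schatten classes, with the stitching statement obtained by resolving the identity of $\Hil$ along $\{e_i\}_{i\in I}$. For $u\in K$ define $E_u\colon H\to\Hil$ by $E_u\eta:=u\otimes\eta$; then $\|E_u\eta\|_{\Hil}=\|u\|_K\|\eta\|_H$, so $E_u\in\Bou(H,\Hil)$ with $\|E_u\|=\|u\|_K$, the map $u\mapsto E_u$ is linear, and the adjoint $E_u^*\colon\Hil\to H$ is the unique bounded operator determined by $E_u^*(w\otimes y)=\langle w,u\rangle_K\,y$ on simple tensors, so $u\mapsto E_u^*$ is conjugate-linear. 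Comparing inner products gives the key identity $T_{u,v}=E_u^*\,T\,E_v$, since $\langle E_u^*TE_v\eta,\xi\rangle_H=\langle TE_v\eta,E_u\xi\rangle_{\Hil}=\langle T(v\otimes\eta),u\otimes\xi\rangle_{\Hil}$.

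\textbf{Slicing.} From this identity and the ideal property of the chosen class --- for $\Bou_\mu\in\{\Bou_1,\Bou_2\}$ one has $\|ASB\|_\mu\le\|A\|\,\|S\|_\mu\,\|B\|$, and for $\Bou_\mu=\Bou$ this is submultiplicativity of the operator norm --- we obtain $T_{u,v}\in\Bou_\mu(H)$ together with $\|T_{u,v}\|_\mu\le\|u\|\,\|T\|_\mu\,\|v\|$. Linearity in $v$, conjugate-linearity in $u$, and joint boundedness are then inherited from $v\mapsto E_v$ and $u\mapsto E_u^*$. For the series statement, observe that $S\mapsto E_u^*SE_v$ is a bounded linear map $\Bou_\mu(\Hil)\to\Bou_\mu(H)$ of norm at most $\|u\|\,\|v\|$; applying it to the $\Bou_\mu$-convergent series $T=\sum_{n}T^{(n)}$ yields $T_{u,v}=\sum_{n}T^{(n)}_{u,v}$ with convergence in $\Bou_\mu(H)$.

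\textbf{Stitching.} Write $E_i:=E_{e_i}$. Orthonormality of $\{e_i\}$ gives $E_i^*E_j=\delta_{ij}\,I_H$, while $\sum_{i\in I}E_iE_i^*=I_{\Hil}$ holds in the strong operator topology: the partial sums are orthogonal projections, hence uniformly bounded, and converge to the identity on the dense set of simple tensors, so on all of $\Hil$. Let $\mathcal D:=\mathrm{span}\{e_j\otimes y:\ j\in I,\ y\in H\}$, a dense subspace of $\Hil$. For $z=\sum_{j\in F}e_j\otimes y_j\in\mathcal D$ we have $z=\sum_{j\in F}E_jy_j$, hence $E_i^*Tz=\sum_{j\in F}E_i^*TE_jy_j=\sum_{j\in F}T_{ij}y_j$, and therefore
\[
Tz=\sum_{i\in I}E_iE_i^*Tz=\sum_{i\in I}e_i\otimes\Big(\sum_{j\in F}T_{ij}y_j\Big).
\]
In particular the outer sum converges (to $Tz\in\Hil$) and the right-hand side is independent of the chosen representation of $z$, so the formula defining $Q$ is meaningful on $\mathcal D$ and $Q=T|_{\mathcal D}$; since $T$ is bounded and $\mathcal D$ is dense, the continuous extension of $Q$ coincides with $T\in\Bou_\mu(\Hil)$, whence $\|Q\|_\mu=\|T\|_\mu$ is immediate.

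\textbf{Main obstacle.} Conceptually everything is routine once the identity $T_{u,v}=E_u^*TE_v$ is in place; the only point that needs genuine care is the interchange of the (possibly infinite) basis sum with $T$ in the stitching step --- i.e.\ justifying $\sum_{i\in I}E_iE_i^*Tz=Tz$ and the convergence of the resulting outer sum over $i$ --- which is precisely where the uniform boundedness of the projection partial sums, combined with density of $\mathcal D$, is used.
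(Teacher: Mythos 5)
Your proof is correct and the slicing half is essentially identical to the paper's: the key identity $T_{u,v}=E_u^* T E_v$ (the paper's $J_u^* T J_v$) plus the two-sided ideal property of $\Bou_1,\Bou_2$ does all the work, and the series statement follows from boundedness of the compression map exactly as you say.

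For stitching your route is genuinely a bit different and somewhat cleaner. The paper first shows $Q y = T y$ on finite sums by applying $J_{e_k}^*$ to both sides, comparing slice by slice, and then invoking the Parseval identity $\sum_i\|J_{e_i}^* y\|^2=\|y\|^2$ to conclude $Ty - Qy = 0$; it then gives a \emph{separate} argument, factoring $Q=\mathcal{J}^* S\mathcal{J}$ with $S=\mathcal{J} T\mathcal{J}^*$ through $\ell^2(I;H)$ and using the ideal property again, to establish that $Q$ lies in the same Schatten class. You instead use the SOT resolution of the identity $\sum_i E_i E_i^*=I_{\Hil}$ applied directly to $Tz$, which simultaneously proves convergence of the defining series for $Qz$ and the equality $Qz=Tz$; once $Q=T$ as bounded operators, membership of $Q$ in $\Bou_\mu$ and the norm equality are indeed automatic, so the paper's $\mathcal{J}/S$ factorization is not logically needed (it is a self-contained way to see $Q\in\Bou_\mu$ without first identifying $Q$ with $T$, but given the conclusion $Q=T$ it adds nothing). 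Your version is tighter; the only thing worth saying explicitly is why the SOT limit $\sum_i E_i E_i^*\to I_{\Hil}$ holds, which you do: the partial sums are orthogonal projections (hence uniformly bounded) converging to $I$ on the dense span of simple tensors.
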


\begin{proof} \textit{Part 1. Slicing.} For $u\in K$, let 
\[
J_{u}:H\to K \otimes H \quad \mbox{defined by} \quad J_{u}\xi:=u\otimes \xi \qquad (\xi \in H).
\]
Then $J_u$ is linear with the operator norm $\lVert J_u \rVert = \lVert u \rVert$. By definition,
\[
\langle T_{u,v} \eta, \xi \rangle_{_{H}} = \langle T(v\otimes \eta),u \otimes \xi \rangle_{_{\Hil}} = \langle T J_{v}\eta,J_{u}\xi\rangle_{_{\Hil}} = \langle J_{u}^* T J_{v}\eta, \xi\rangle_{_{H}}.
\]
Hence, $T_{u,v}= J_{u}^* T J_{v}$. Therefore, we have $ \lVert T_{u,v} \rVert \leq \lVert J_{u}^* \rVert~ \lVert  T \rVert~ \lVert J_{v} \rVert$. In the cases where $\mu \in \{1,2\}$, the operator classes $\Bou_\mu$ are two-sided ideals with the property $\lVert AXB \rVert_{\mu} \leq \lVert A \rVert \lVert X \rVert_{\mu} \lVert B \rVert$, where $A,B$ are bounded; see e.g.~ {\cite[p. 141]{kadison1997fundamentals}}.
This shows the norm bounds. Linearity and conjugate-linearity are clear. The final part of the first assertion follows from the fact that $T_{u,v} = J_u^* T J_v$ and continuity.\medskip

\textit{Part 2. Stitching.} Recall that given a Hilbert space $H$ and an index set $I$, the Hilbert space of square summable sequences is
\[
\ell^2(I;H):=\big\{ (x_{i})_{i\in I}:x_i\in H \mbox{ and } \sum_{i\in I}\lVert x_i \rVert^2 <\infty \big\} \; \cong \; K\otimes H.
\]
The adjoint of $J_u$ defined above is given by $J_u^*(w\otimes \eta)=\langle w,u\rangle \eta$ on simple tensors. Therefore,
\[
J_{e_i}^*(e_k\otimes \eta)=\delta_{ik} \eta \qquad (i\in I,~\eta\in H).
\]
If $y=\sum_{k\in I} e_k\otimes \eta_k$ is finitely supported, then $J_{e_k}^*y=\eta_k$ for $k\in I$ and $0$ otherwise. Hence
\begin{equation}\label{eq:Parseval}
\sum_{i\in I}\|J_{e_i}^* y\|_{_H}^2=\sum_{i\in I}\|\eta_i\|_{_H}^2=\|y\|_{_{K\otimes H}}^2.
\end{equation}
By the density of such finite sums and continuity of the norm, the above holds for all $y\in K\otimes H$.\medskip

Suppose $y=\sum_{j\in F} e_j\otimes y_j$ with $F$ finite. For any $k\in I$, using the earlier slice identity $T_{kj}=J_{e_k}^* T J_{e_j}$ we compute
\begin{align*}
J_{e_k}^*(Qy)
&=J_{e_k}^*\sum_{i\in I} e_i\otimes \Big[\sum_{j\in F} T_{ij}(y_j)\Big]\\
&=\sum_{j\in F} T_{kj}(y_j)
=\sum_{j\in F} J_{e_k}^*\,T\,(e_j\otimes y_j)
=J_{e_k}^*(Ty).
\end{align*}
Thus $J_{e_k}^*(Ty-Qy)=0$ for all $k\in I$. Applying \eqref{eq:Parseval} gives
\[
\|Ty-Qy\|^2_{K\otimes H}=\sum_{k\in I}\|J_{e_k}^*(Ty-Qy)\|^2_{H}=0.
\]
Hence $Qy=Ty$ for every finitely supported $y \in \Hil$. Therefore, since $T$ is bounded,
\[
\|Qy\|=\|Ty\|\le \|T\|\,\|y\|.
\]
It follows that $Q=T$ on $K\otimes H$, and $\lVert Q\rVert = \lVert T\rVert$.\medskip

To prove the Schatten-class stability, let $\mathcal{J}:K\otimes H\to \ell^2(I;H)$ be the map
\[
\mathcal{J}(z):=\big(J_{e_i}^* z\big)_{i\in I},
\]
which is an isometry by \eqref{eq:Parseval}. Its adjoint is the map
\[
\mathcal{J}^*:\ell^2(I;H)\to K\otimes H,\qquad
\mathcal{J}^*((y_i)_{i\in I})=\sum_{i\in I} e_i\otimes y_i,
\]
and $\|\mathcal{J}\|=\|\mathcal{J}^*\|=1$, with $\mathcal{J}^*\mathcal{J}=\mathrm{Id}_{K\otimes H}$. Define the block operator
\[
S:\ell^2(I;H)\to \ell^2(I;H),\qquad S((y_j)):=\Big(\sum_{j\in I} T_{ij}(y_j)\Big)_{i\in I}.
\]
On the dense subspace of finitely supported vectors, $S=\mathcal{J}\,T\,\mathcal{J}^*$. Hence by density, it holds on all of $\ell^2(I;H)$. If $T\in \Bou_\mu(\Hil)$ with $\mu\in\{1,2\}$, the two-sided ideal property yields (see {\cite[p. 141 for HS]{kadison1997fundamentals}}, {\cite[\S 18]{conway2000course}}, {\cite[Theorem~7.8]{weidmann1980linear}}):
\begin{align*}
S=\mathcal{J}T\mathcal{J}^*\in \Bou_\mu(\ell^2(I;H)) &\implies Q=\mathcal{J}^* S \mathcal{J}\in \Bou_\mu(\Hil)\\
&\implies \|Q\|_\mu \le \|S\|_\mu \le \|T\|_\mu.
\end{align*}
Since $Q=T$, we conclude $\|Q\|_\mu=\|T\|_\mu$.
\end{proof}

\subsection{Transposition and vectorization} We recall some properties of the transposition operator and the vectorization isometry introduced in Subsection~\ref{S:Hilbert-prelim} that will be needed later.

\begin{lemma}\label{lemma:transposition} 
Let $H_1,H_2,K$ be Hilbert spaces. For any bounded operator $Q\in\mathcal{B}(H_1,H_2)$ we have:
\begin{enumerate}[1.]
\item  If $Q\in \Bou_\mu(H_1,H_2)$ for some $\Bou_{\mu}\in \{\Bou,\Bou_{1},\Bou_2\}$, then $Q^\#\in \Bou_\mu(H_2^\#,H_1^\#)$ and $\|Q^\#\|_\mu = \|Q\|_\mu$.
\item If \(R\in\mathcal{B}(K,H_1)\), then $(QR)^\# = R^\#\, Q^\#$.
\item The adjoint and transposition satisfy $(Q^*)^\#=(Q^\#)^* = \conj_{H_2}\, Q\, \conj_{H_1}^{-1}$.
\end{enumerate}
\end{lemma}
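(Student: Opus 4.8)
The plan is to prove all three parts by unwinding the definition $Q^{\#}=\conj_{H_1}\,Q^{*}\,\conj_{H_2}^{-1}$, carefully tracking which maps in sight are linear and which are conjugate-linear, and taking the parts in the order (2), (3), (1), since each feeds the next. \emph{Part 2} is a one-line cancellation: since $QR\in\Bou(K,H_2)$ the definition gives $(QR)^{\#}=\conj_{K}(QR)^{*}\conj_{H_2}^{-1}=\conj_{K}R^{*}Q^{*}\conj_{H_2}^{-1}$, while $R^{\#}Q^{\#}=\bigl(\conj_{K}R^{*}\conj_{H_1}^{-1}\bigr)\bigl(\conj_{H_1}Q^{*}\conj_{H_2}^{-1}\bigr)$, and the interior factor $\conj_{H_1}^{-1}\conj_{H_1}=\mathrm{Id}_{H_1}$; the two expressions coincide.

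\emph{Part 3.} The identity $(Q^{*})^{\#}=\conj_{H_2}\,Q\,\conj_{H_1}^{-1}$ is immediate: apply the definition of $(\cdot)^{\#}$ to $Q^{*}\in\Bou(H_2,H_1)$ and use $(Q^{*})^{*}=Q$. It remains to identify the Hilbert-space adjoint $(Q^{\#})^{*}$ with the same operator. I would do this on inner products: for $x\in H_1^{\#}$ and $y\in H_2^{\#}$, expand $\langle x,\,Q^{\#}y\rangle_{H_1^{\#}}=\langle x,\,\conj_{H_1}Q^{*}\conj_{H_2}^{-1}y\rangle_{H_1^{\#}}$ and repeatedly apply the defining relation $\langle a,b\rangle_{H^{\#}}=\langle b,a\rangle_{H}$ together with conjugate-linearity of each $\conj$ and the adjoint identity for $Q$; this transports the quantity to $\langle\,\conj_{H_2}Q\,\conj_{H_1}^{-1}x,\,y\,\rangle_{H_2^{\#}}$, and since $x,y$ are arbitrary, $(Q^{\#})^{*}=\conj_{H_2}Q\,\conj_{H_1}^{-1}=(Q^{*})^{\#}$. (Alternatively, one can note $(Q^{\#})^{\#}=Q$ and invoke Part 2, but the direct check is cleaner.)

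\emph{Part 1.} First I would record that $\conj_{H_2}^{-1}:H_2^{\#}\to H_2$ and $\conj_{H_1}:H_1\to H_1^{\#}$ are conjugate-linear isometric bijections, so $Q^{\#}=\conj_{H_1}\circ Q^{*}\circ\conj_{H_2}^{-1}$ is a genuinely \emph{linear} operator (a composition of two conjugate-linear maps with one linear map between them), and hence the Schatten-ideal language applies to it. The statement then reduces, via the standard fact that $Q^{*}\in\Bou_{\mu}$ with $\|Q^{*}\|_{\mu}=\|Q\|_{\mu}$, to the sub-claim that pre- and post-composition with a conjugate-linear isometric bijection preserves each class $\Bou_{\mu}\in\{\Bou,\Bou_1,\Bou_2\}$ and the corresponding norm (and the operator norm for $\Bou$). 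This sub-claim I would establish using that such a map sends an orthonormal basis to an orthonormal basis and satisfies $\langle Cx,Cy\rangle=\overline{\langle x,y\rangle}$: for $\mu=2$ it preserves $\|T\|_{2}^{2}=\sum_i\|Te_i\|^{2}$, for $\mu=1$ it preserves the variational formula $\|T\|_{1}=\sup\sum_i|\langle Te_i,f_i\rangle|$ over orthonormal bases, and it trivially preserves the operator norm. Applying it to $Q^{*}$ gives $Q^{\#}\in\Bou_{\mu}(H_2^{\#},H_1^{\#})$ with $\|Q^{\#}\|_{\mu}=\|Q^{*}\|_{\mu}=\|Q\|_{\mu}$. (A variant: by Part 3, $Q^{\#}(Q^{\#})^{*}=\conj_{H_1}(Q^{*}Q)\conj_{H_1}^{-1}$ is positive with the same spectrum as $Q^{*}Q$, hence $Q^{\#}$ has the same singular values as $Q$.)

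The only place requiring genuine care — and thus the main, if minor, obstacle — is the bookkeeping in Part 1: one must be sure the Schatten-ideal apparatus, which is set up for linear operators, is invoked legitimately, i.e. that $Q^{\#}$ is honestly linear and that the two conjugate-linear changes of variable act isometrically at the level of singular values (equivalently, of the orthonormal-basis formulas above). Parts 2 and 3 are routine algebra once the conventions for $\conj_{H}$ and $(\cdot)^{\#}$ are fixed.
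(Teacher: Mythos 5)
Your proof is correct and takes essentially the same route as the paper: Part~2 is the identical one-line cancellation, Part~3 is the same inner-product computation (you just establish $(Q^*)^\#$ first and $(Q^\#)^*$ second, while the paper reverses the order), and Part~1 is the same appeal to conjugate-linear isometries preserving linearity, boundedness, and Schatten norms. The paper states Part~1 in one terse sentence; your argument supplies the missing detail (orthonormal-basis characterization of $\|\cdot\|_2$ and the variational formula for $\|\cdot\|_1$, or alternatively the singular-value comparison via $Q^\#(Q^\#)^* = \conj_{H_1}(Q^*Q)\conj_{H_1}^{-1}$), but this is an elaboration rather than a different proof.
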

\begin{proof} For (1), since both $\conj_{H_1}$ and $\conj_{H_2}$ are conjugate linear and isometries, $Q^\#$ is linear and bounded. The Schatten class stability can be deduced using that both $\conj_{H_1}$ and $\conj_{H_2}$ are (conjugate-linear) isometries.\medskip

To prove $(2)$, let $R\in\mathcal{B}(K,H_1)$. By definition
\begin{align*}
(QR)^\#
= \conj_K\, (QR)^*\, \conj_{H_2}^{-1}
&= \conj_K\, R^* Q^*\, \conj_{H_2}^{-1}\\
&=\bigl(\conj_K R^* \conj_{H_1}^{-1}\bigr)
  \bigl(\conj_{H_1} Q^* \conj_{H_2}^{-1}\bigr)
= R^\# Q^\#.    
\end{align*}

Finally, to prove $(3)$, let $\overline{y}\in H_2^\#$ and $\overline{x}\in H_1^\#$, and compute:
\begin{align*}
\langle Q^\# \overline{y},\ \overline{x}\rangle_{_{H_{1}^\#}} 
&= \langle \conj_{H_1} Q^* \conj_{H_2}^{-1} \overline{y},\ \conj_{H_1} x\rangle_{_{H_1^\#}}= \langle \conj_{H_1} Q^* y,\ \conj_{H_1} x\rangle_{_{H_1^\#}}\\
= \langle x ,Q^* y \rangle_{_{H_1}}
&= \langle Qx , y \rangle_{_{H_2}}
= \langle \conj_{H_2}y,\conj_{H_2}Qx \rangle_{_{H_2^\#}}
= \langle \overline{y}, \conj_{H_2}Q \conj_{H_1}^{-1}\overline{x} \rangle_{_{H_2^\#}}.
\end{align*}
Therefore $(Q^{\#})^*=\conj_{H_2}Q \conj_{H_1}^{-1}$. For $Q^*: H_2 \to H_1$, by definition, we immediately have $(Q^*)^{\#} = \conj_{H_2} Q \conj_{H_1}^{-1}$.
\end{proof}

\begin{lemma}\label{lemma:vec-identity}
Suppose $H_1,H_2,\widetilde{\Hil}$ are Hilbert spaces, and $X\in \Bou_2(H_1^\#,H_2)$.

\begin{enumerate}[1.]
\item For all $u\in H_1$ and $v\in H_2$ the following holds:
\begin{align*}
\langle \mathrm{vec}(X),\, u\otimes v\rangle_{_{H_1\otimes H_2}}
= \big\langle X\,\overline{u},\, v\big\rangle_{_{H_2}}.  
\end{align*}
\item For all $A\in \Bou_2(\widetilde\Hil,H_1)$ and $B\in \Bou_2(\widetilde\Hil^{\#},H_2)$ the following holds:
\begin{align*}
AA^* \otimes BB^* \, \mathrm{vec}(X) = \mathrm{vec}(BB^* X(A^\#)^*A^\#).    
\end{align*}
\end{enumerate}

\end{lemma}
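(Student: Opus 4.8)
The plan is to reduce both identities to the case of rank-one operators and then extend by linearity and continuity. The tools for the extension are: $\mathrm{vec}:\Bou_2(H_1^\#,H_2)\to H_1\otimes H_2$ is an isometry; for fixed $u$ the map $X\mapsto X\overline{u}$ (from $\Bou_2(H_1^\#,H_2)$ to $H_2$) is bounded linear, and $X\mapsto BB^*X(A^\#)^*A^\#$ is bounded linear on $\Bou_2(H_1^\#,H_2)$ because $\Bou_2$ is a two-sided operator ideal; and the finite-rank operators are dense in $\Bou_2(H_1^\#,H_2)$. Hence it suffices to verify the two displays for $X=\ro_{y,\overline{x}}$ with $x\in H_1$ and $y\in H_2$, where by the definition of $\mathrm{vec}$ one has $\mathrm{vec}(\ro_{y,\overline{x}})=x\otimes y$.

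For part~(1) with $X=\ro_{y,\overline{x}}$, the left-hand side is $\langle x\otimes y,\,u\otimes v\rangle_{_{H_1\otimes H_2}}=\langle x,u\rangle_{_{H_1}}\,\langle y,v\rangle_{_{H_2}}$. For the right-hand side, $\ro_{y,\overline{x}}\overline{u}=\langle \overline{u},\overline{x}\rangle_{_{H_1^\#}}\,y$, and the only point needing care is that, by the definition of the inner product on the conjugate space, $\langle \overline{u},\overline{x}\rangle_{_{H_1^\#}}=\langle \overline{x},\overline{u}\rangle_{_{H_1}}=\langle x,u\rangle_{_{H_1}}$ (the bars being notational on the common underlying set). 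Thus $\langle \ro_{y,\overline{x}}\overline{u},v\rangle_{_{H_2}}=\langle x,u\rangle_{_{H_1}}\,\langle y,v\rangle_{_{H_2}}$ agrees with the left-hand side, and part~(1) follows by density.

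For part~(2) I would first record the algebraic simplification that drives everything: by Lemma~\ref{lemma:transposition}(3), $A^\#=\conj_{\widetilde{\Hil}}\,A^*\,\conj_{H_1}^{-1}$ and $(A^\#)^*=\conj_{H_1}\,A\,\conj_{\widetilde{\Hil}}^{-1}$, so the conjugation isometries telescope to give $(A^\#)^*A^\#=\conj_{H_1}\,AA^*\,\conj_{H_1}^{-1}$, and hence $(A^\#)^*A^\#\,\overline{u}=\overline{AA^*u}$ for every $u\in H_1$. Since part~(1) shows both sides of the display in~(2) lie in $H_1\otimes H_2$, it is enough to pair them against the (dense set of) simple tensors $u\otimes v$. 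On the left, self-adjointness of $AA^*$ and $BB^*$ together with the defining action of the tensor product on simple tensors give $\langle (AA^*\otimes BB^*)\mathrm{vec}(X),u\otimes v\rangle=\langle \mathrm{vec}(X),(AA^*u)\otimes(BB^*v)\rangle$, which by part~(1) equals $\langle X\,\overline{AA^*u},\,BB^*v\rangle_{_{H_2}}=\langle BB^*X\,\overline{AA^*u},\,v\rangle_{_{H_2}}$. On the right, part~(1) gives $\langle \mathrm{vec}(BB^*X(A^\#)^*A^\#),u\otimes v\rangle=\langle BB^*X(A^\#)^*A^\#\,\overline{u},\,v\rangle_{_{H_2}}$, and substituting $(A^\#)^*A^\#\,\overline{u}=\overline{AA^*u}$ makes the two sides coincide. (Equivalently, one can check directly on rank-one $X$ that $BB^*\ro_{y,\overline{x}}(A^\#)^*A^\#=\ro_{BB^*y,\,\overline{AA^*x}}$ from the same identity and the composition rules for rank-one operators, so its image under $\mathrm{vec}$ is $(AA^*x)\otimes(BB^*y)=(AA^*\otimes BB^*)(x\otimes y)$.)

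I expect the only genuinely delicate point to be the conjugate-space bookkeeping — in particular verifying $\langle \overline{u},\overline{x}\rangle_{_{H_1^\#}}=\langle x,u\rangle_{_{H_1}}$ and $(A^\#)^*A^\#=\conj_{H_1}AA^*\conj_{H_1}^{-1}$ without a sign or conjugation slip — while the density/continuity extensions and the tensor-product manipulations are routine.
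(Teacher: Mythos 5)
Your proof is correct and rests on the same core ingredients as the paper's: reduction to rank-one operators for part~(1), the formula $\mathrm{vec}(\ro_{y,\overline{x}})=x\otimes y$, and the telescoping of the conjugation isometries in $(A^\#)^*A^\#$ via Lemma~\ref{lemma:transposition}. The only genuine (and small) difference is in part~(2): the paper repeats the rank-one reduction, computing $BB^*\ro_{y,\overline{x}}(A^\#)^*A^\#=\ro_{BB^*y,\overline{AA^*x}}$ directly and matching it to $(AA^*\otimes BB^*)(x\otimes y)$; you instead derive part~(2) for \emph{general} Hilbert--Schmidt $X$ by pairing both sides against the dense span of simple tensors $u\otimes v$, applying part~(1) to each side, and invoking $(A^\#)^*A^\#\overline{u}=\overline{AA^*u}$. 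Your route is marginally cleaner because it avoids a second density/approximation step (part~(1) already gives the identity for all $X$, so part~(2) needs no separate rank-one pass), at the cost of first isolating the algebraic identity $(A^\#)^*A^\#=\conj_{H_1}AA^*\conj_{H_1}^{-1}$, which the paper only uses implicitly inside the rank-one calculation. One inessential slip: it is not part~(1) that guarantees both sides of~(2) lie in $H_1\otimes H_2$; the left side does because $AA^*\otimes BB^*$ is a bounded operator on $H_1\otimes H_2$, and the right side does because $BB^*X(A^\#)^*A^\#$ is Hilbert--Schmidt by the ideal property and $\mathrm{vec}$ maps $\Bou_2(H_1^\#,H_2)$ into $H_1\otimes H_2$. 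With that wording adjusted, the argument is complete.
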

\begin{proof} We begin with the proof of the first assertions. It suffices to prove it for rank-one $X=\ro_{y,\overline{x}}$, where $x\in H_1$ and $y\in H_2$, since finite rank operators are linear combinations of such rank-one operators, and are dense in $\Bou_2(H_1^\#,H_2)$, while both sides of the first assertion define a bounded linear map on $X$. Using $\mathrm{vec}(\ro_{y,\overline{x}})=x\otimes y$, we obtain
\begin{align*}
\langle \mathrm{vec}(\ro_{y,\overline{x}}),\, u\otimes v\rangle_{_{H_1\otimes H_2}} 
&= \langle x,\, u\rangle_{_{H_1}} \langle y,\, v\rangle_{_{H_2}} \\
= \langle \overline{u},\overline{x}\rangle_{_{H_1^\#}} \,\langle y,\, v \rangle_{_{H_2}}
&=\big\langle \ro_{y,\overline{x}}\,\overline{u},\, v\big\rangle_{_{H_2}}.
\end{align*}
This shows the desired identity.\medskip

To show the second identity,  suppose $X=\ro_{y,\overline{x}}$ (rank-one) for $\overline{x}\in H_1^{\#}$ and $y\in H_2$. Then, by definition $\mathrm{vec}(\ro_{y,\overline{x}}) = x\otimes y$, and thus,
\begin{align*}
AA^* \otimes BB^* \mathrm{vec}(\ro_{y,\overline{x}})
&= AA^* \otimes BB^* (x\otimes y) \\
= (AA^*x) \otimes (BB^* y) 
&= \mathrm{vec}(\ro_{BB^*y,\overline{AA^*x}}).    
\end{align*}
On the other hand, for $\overline{z}\in H_1^{\#}$, using Lemma~\ref{lemma:transposition}, we have
\begin{align*}
BB^* \ro_{y,\overline{x}} (A^\#)^* A^\# (\overline{z})
&= BB^* \ro_{y,\overline{x}} \conj_{H_1} A \conj_{\widetilde{\Hil}}^{-1} \conj_{\widetilde{\Hil}} A^* \conj_{H_1}^{-1} (\overline{z})\\
= BB^* \ro_{y,\overline{x}} \overline{AA^* z}
&= \langle \overline{AA^* z}, \overline{x}\rangle_{_{H_1^\#}}  BB^*y
= \langle x, AA^* z\rangle_{_{H_1}}  BB^*y \\
= \langle AA^*x,  z\rangle_{_{H_1}}  BB^*y 
&= \langle \overline{z},\overline{AA^*x}\rangle_{_{H_1^{\#}}}  BB^*y
= \ro_{BB^*y,\overline{AA^*x}} (\overline{z}).
\end{align*}
Thus, using the earlier computation
\[
\mathrm{vec}\big(BB^* \ro_{y,\overline{x}} (A^\#)^* A^\#\big) = \mathrm{vec} \big( \ro_{BB^*y,\overline{AA^*x}} \big) = AA^* \otimes BB^* \mathrm{vec}(\ro_{y,\overline{x}}).
\]
Therefore, the desired identity holds for rank-one $X$, and extends to $\Bou_2(H_1^{\#},H_2)$ by continuity.
\end{proof}

\subsection{Theorem~\ref{T:Schur-ext-Hil} for rank-one \(T=T(\star)\)} In order to prove Theorem \ref{T:Schur-ext-Hil}, we begin by addressing the case of  admissible rank-one positive operators. We start with the following lemma.

\begin{lemma}[Slicing of rank-one $T$]\label{prop:rank-one-slicing}
Let $H_1,H_2,K$ be Hilbert spaces, let $H:=H_1\otimes H_2$, $\Hil:=K\otimes H$, and let an orthonormal basis $\{e_i\}_{i\in I}$ of $K$ be given. For a rank-one positive operator $T=\ro_{w,w}\in \Bou(\Hil)$, write
\[
w=\sum_{i\in I} e_i\otimes u_i,\qquad 
u_i\in H,\qquad \sum_{i\in I}\|u_i\|^2<\infty.
\]
Then for $i,j\in I$, the slices are given by $T_{e_{i},e_j}=\ro_{u_i,u_j}$. For $\star=\star(T)$, we have $P\star Q\in \Bou_1(K)$ for all $P\in \Bou_2(H_1)$ and $Q\in \Bou_2(H_2)$, with
\begin{align*}
\lVert P\star Q \rVert_1~ \leq~ & \lVert P\rVert_2 ~\lVert Q\rVert_2~ \lVert w\rVert^2,\\
\mbox{and}\qquad \langle (P\star Q)e_j,e_i\rangle_{_{K}} ~= ~& \langle (P\otimes Q)u_j,\,u_i\rangle_{_{H}} \qquad (i,j\in I).
\end{align*}
Moreover, $P\star Q\;  \succcurlyeq \;0$ whenever $P\;  \succcurlyeq \; 0$ and $Q\;  \succcurlyeq \;0$.
\end{lemma}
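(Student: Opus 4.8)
The plan is to collapse the entire statement into the single factorization $P\star Q = U^{*}(P\otimes Q)\,U$, where $U$ is the explicit Hilbert--Schmidt operator assembled from the ``coefficients'' of $w$; once this is in place, the norm bound and positivity are immediate from the standard Schatten-ideal estimates recalled in Subsection~\ref{S:Hilbert-prelim}.

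First I would compute the slices directly. Since $T=\ro_{w,w}$ acts by $T(e_j\otimes\eta)=\langle e_j\otimes\eta,\,w\rangle_{\Hil}\,w$, and expanding $w=\sum_{k}e_k\otimes u_k$ with $\langle e_j,e_k\rangle_{K}=\delta_{jk}$ gives $\langle e_j\otimes\eta,\,w\rangle_{\Hil}=\langle\eta,u_j\rangle_{H}$, one gets for all $\xi,\eta\in H$
\[
\langle T_{e_i,e_j}\eta,\xi\rangle_{H}
=\langle T(e_j\otimes\eta),\,e_i\otimes\xi\rangle_{\Hil}
=\langle\eta,u_j\rangle_{H}\,\langle u_i,\xi\rangle_{H}
=\langle \ro_{u_i,u_j}\eta,\xi\rangle_{H},
\]
hence $T_{e_i,e_j}=\ro_{u_i,u_j}$. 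Then, using $\ro_{u_i,u_j}^{*}=\ro_{u_j,u_i}$ together with the identity $\Tr(M\ro_{x,y})=\langle Mx,y\rangle$, the defining relation of $\star$ gives $\langle (P\star Q)e_j,e_i\rangle_{K}=\langle P\otimes Q,\,T_{ij}\rangle_{\Bou_2(H)}=\langle (P\otimes Q)u_j,\,u_i\rangle_{H}$, which is the asserted formula.

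Next I would introduce $U\colon K\to H$ determined by $Ue_i:=u_i$. Since $\sum_{i}\|u_i\|^{2}=\|w\|^{2}<\infty$, this map is well defined and bounded, and in fact $U\in\Bou_2(K,H)$ with $\|U\|_{2}=\|U^{*}\|_{2}=\|w\|$. The formula from the previous step reads $\langle (P\star Q)e_j,e_i\rangle_{K}=\langle U^{*}(P\otimes Q)U e_j,\,e_i\rangle_{K}$ for all $i,j\in I$, and since $\{e_i\}_{i\in I}$ is an orthonormal basis of $K$ this forces $P\star Q=U^{*}(P\otimes Q)U$; this also re-confirms that $P\star Q$ is well defined. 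Because $P\otimes Q$ is bounded with $\|P\otimes Q\|=\|P\|\,\|Q\|\le\|P\|_{2}\|Q\|_{2}$, and the trace class satisfies $\Bou_2\cdot\Bou\cdot\Bou_2\subset\Bou_1$ with $\|ABC\|_{1}\le\|A\|_{2}\,\|B\|\,\|C\|_{2}$, I conclude $P\star Q\in\Bou_1(K)$ with $\|P\star Q\|_{1}\le\|U^{*}\|_{2}\,\|P\otimes Q\|\,\|U\|_{2}\le\|P\|_{2}\,\|Q\|_{2}\,\|w\|^{2}$. For positivity, if $P\succcurlyeq 0$ and $Q\succcurlyeq 0$ then $P\otimes Q\succcurlyeq 0$, so $P\otimes Q=CC^{*}$ with $C=P^{1/2}\otimes Q^{1/2}$, whence $P\star Q=(C^{*}U)^{*}(C^{*}U)\succcurlyeq 0$; equivalently $\langle (P\star Q)x,x\rangle_{K}=\langle (P\otimes Q)Ux,\,Ux\rangle_{H}\ge 0$ for every $x\in K$.

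The only genuinely delicate point is the well-definedness of $U$ as a Hilbert--Schmidt operator, resting on the square-summability $\sum_{i}\|u_i\|^{2}=\|w\|^{2}<\infty$, together with keeping track of which operator ideal each factor of $U^{*}(P\otimes Q)U$ occupies so that the trace-norm constant emerges as exactly $\|w\|^{2}$; beyond that, the argument is just the definitions combined with the Schatten-ideal facts already recorded.
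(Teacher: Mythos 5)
Your proof is correct and follows essentially the same route as the paper: compute the slices to get $T_{ij}=\ro_{u_i,u_j}$, then package everything as $P\star Q=V^{*}(P\otimes Q)V$ for the Hilbert--Schmidt operator $V\colon e_i\mapsto u_i$ (which the paper also introduces) and read off the trace-norm bound and positivity from the Schatten-ideal inequalities. The only cosmetic difference is that you invoke $\Bou_2\cdot\Bou\cdot\Bou_2\subset\Bou_1$ directly with $\|P\otimes Q\|\le\|P\|_2\|Q\|_2$, whereas the paper factors through $P\otimes Q\in\Bou_2$ and the fact that a product of two Hilbert--Schmidt operators is trace-class; both yield the constant $\|w\|^2=\|V\|_2^2$.
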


\begin{proof} Since $w=\sum_{k\in I} e_k\otimes u_k$, for any $j\in I$ and $\eta\in H$, by definition
\begin{align*}
T(e_j\otimes \eta)
&=\ro_{w,w} (e_j\otimes \eta)
= \langle e_j\otimes \eta,w\rangle_{_{\Hil}}\, w \\
&= \Big(\sum_{k\in I}\langle e_j, e_k\rangle_{_{K}}\, \langle \eta,u_k\rangle_{_{H}} \Big) \, w 
= \langle \eta,u_j\rangle_{_{H}}\, w.    
\end{align*}
Hence, for $\xi,\eta\in H$, by definition
\begin{align*}
\langle T_{ij}\eta,\xi\rangle_{_{H}}
&= \langle T(e_j\otimes \eta),e_i\otimes \xi\rangle_{_{\Hil}}
= \langle \eta, u_j\rangle_{_{H}} \, \langle w,e_i\otimes \xi\rangle_{_{\Hil}}\\
&= \langle \eta, u_j\rangle_{_{H}} \, \langle u_i,\xi\rangle_{_{H}}
= \big \langle \langle \eta, u_j\rangle_{_{H}}\,u_i,\xi\big\rangle_{_{H}}
= \langle \ro_{u_i, u_j}\eta,\xi\rangle_{_{H}}.
\end{align*}
Therefore $T_{ij}=\ro_{u_i, u_j}$, proving the first claim.\medskip

\noindent Next, from {\cite[ Proposition 16.3]{conway2000course}} we have 
\begin{align*}
\langle(P\star Q)e_j,e_i\rangle_{_{K}}
&:=\Tr_{_{H}} (T_{ij}^* (P\otimes Q)) 
= \Tr_{_{H}}(\ro_{u_j, u_i} (P\otimes Q)) \\
&=\Tr_{_{H}} (\ro_{u_j, (P\otimes Q)^* u_i})
= \langle (P\otimes Q)u_j, u_i\rangle_{_{H}}.    
\end{align*}
Finally, define $V:K\to H$ by $Ve_i := u_i$, and extend it linearly. Then $V\in \Bou_2(K,H)$ since $\lVert V \rVert_2^2 = \sum_{i\in I} \lVert u_i \rVert^2 = \lVert w \rVert^2 <\infty$. Moreover for Hilbert--Schmidt operators $P,Q$, the product $P\otimes Q$ is Hilbert--Schmidt. Since the product of two Hilbert--Schmidt operators is trace-class {\cite[p. 141]{kadison1997fundamentals}}, 
\[
P\star Q = V^*(P\otimes Q)V \in \Bou_1(K) \quad \mbox{and}\quad \lVert P\star Q \rVert_1 \leq \lVert V\rVert_2^2  \lVert P\rVert_2 \lVert Q\rVert_2.
\]
This also shows the final positivity implication, concluding the proof.
\end{proof}

We now state and prove Theorem~\ref{T:Schur-ext-Hil} for rank-one $T=T(\star)$.

\begin{theorem}\label{T:thmC-for-rank1-fixed}
Suppose $H_1,H_2,K$ are Hilbert spaces, and let $H=H_1\otimes H_2$ and $\Hil=K\otimes H$. Suppose $\{e_i\}_{i\in I}$ is an orthonormal basis of $K$. For a rank-one operator $T=\ro_{w,w} \in \Bou(\Hil)$ with $w\in \Hil\setminus\{0\}$, write
\[
w=\sum_{i\in I} e_i\otimes u_i,\qquad 
u_i\in H,\qquad\sum_{i\in I}\|u_i\|^2<\infty.
\]
Let $\widetilde{\Hil}$ be a given Hilbert space, and let $A\in \Bou_2(\widetilde{\Hil},H_1),$ and $B\in \Bou_2(\widetilde{\Hil}^\#,H_2)$ be nonzero. Then
\begin{align*}
    \rho &\; := \; \sum_{i\in I} \; \langle \, \mathrm{vec}(BA^\#),u_i\,\rangle_{_{H_1\otimes H_2}}\; e_i \; \in \; K.
\end{align*}
In fact $\lVert \rho \rVert \leq \lVert A \rVert_2 \,\lVert B \rVert_2\,\sqrt{\lVert T \rVert_2} <\infty$. Moreover, for $\star=\star(T)\in \prd(\{e_i\}_{i\in I})$, we have
\begin{align*}
AA^* \star BB^* \;  \succcurlyeq \; \frac{1}{\min\big(\rk AA^*,\rk BB^*,{\bf r}(\star)\big )} \ro_{\rho,\rho} \;  \succcurlyeq \; 0,
\end{align*}
where we follow the convention the $\frac{1}{\infty}:=0$, and define 
\begin{align*}
    {\bf r}(\star)&\; :=\; \sup \big\{\rk X: X \,\in\, \mathcal{U}(\star)\big\},\\
    \mbox{where}\qquad \mathcal{U}(\star)&\; :=\; \overline{\mathrm{span}}^{\Bou_2}\,\big\{\mathrm{vec}^{-1}(u_i):i\,\in\, I\big\} \; \subset \; \Bou_2(H_1^{\#},H_2).
\end{align*}
Moreover, the scalar $1/\min\big (\rk AA^*,\rk BB^*,{\bf r}(\star)\big )$ is the best possible universal constant, i.e., it cannot be improved uniformly over all $A,B$.
\end{theorem}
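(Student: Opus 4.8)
The plan is to mirror the finite-dimensional argument in the proof of Theorem~\ref{T:Schur-ext}, Parts II and III, but with all operations reinterpreted through the vectorization isometry $\mathrm{vec}:\Bou_2(H_1^\#,H_2)\to H_1\otimes H_2$ and the transposition $A\mapsto A^\#$, while tracking convergence of the relevant series. First I would set $C_i:=\mathrm{vec}^{-1}(u_i)\in\Bou_2(H_1^\#,H_2)$, so that $T_{ij}=\ro_{u_i,u_j}$ by Lemma~\ref{prop:rank-one-slicing}. Using that lemma together with the factorization $AA^*\otimes BB^*=(A\otimes B)(A\otimes B)^*$ and the vectorization identity of Lemma~\ref{lemma:vec-identity}(2), I would compute, for $u=(u_i)_{i\in I}$ finitely supported,
\[
\langle (AA^*\star BB^*)\,\textstyle\sum_j u_j e_j,\ \textstyle\sum_i u_i e_i\rangle_{_K}
=\big\langle (AA^*\otimes BB^*)\,\mathrm{vec}(U),\ \mathrm{vec}(U)\big\rangle_{_{H_1\otimes H_2}}
=\langle \Theta,\Theta\rangle,
\]
where $U:=\sum_j u_j C_j\in\mathcal U(\star)$ and $\Theta:=B^*\,(\cdot)\,(A^\#)\cdots$ more precisely $\Theta$ is the Hilbert--Schmidt operator obtained by transporting $\mathrm{vec}\circ$ through Lemma~\ref{lemma:vec-identity}; concretely $\langle\Theta,\Theta\rangle=\langle (A^\#)^*A^\#$-conjugated form of $U$ paired with $B B^*\rangle$, which one checks equals $\|B^*\,U\,(\text{suitable conjugate of }A)\|_2^2$. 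The point is that $\Theta$ has rank at most $\min(\rk A,\rk B,\rk U)\le\min(\rk AA^*,\rk BB^*,{\bf r}(\star))=:r$.

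Next I would establish the trace–rank inequality $|\Tr\Theta|^2\le r\,\langle\Theta,\Theta\rangle$ exactly as in the Euclidean case: let $P$ be the orthogonal projection onto the (finite-dimensional, since $r<\infty$ in the nontrivial case) range of $\Theta$; then $\Theta=P\Theta$, and Cauchy--Schwarz in $\Bou_2$ gives $|\Tr\Theta|=|\langle P,\Theta^*\rangle_{_{\Bou_2}}|\le\|P\|_2\|\Theta\|_2=\sqrt{r}\,\|\Theta\|_2$. When $r=\infty$ the asserted coefficient is $0$ and the bound is just $AA^*\star BB^*\succcurlyeq 0$ from Lemma~\ref{prop:rank-one-slicing}, so there is nothing to prove; this is where the convention $1/\infty:=0$ is used. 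Identifying $\Tr\Theta$ with $\langle\mathrm{vec}(BA^\#),u\text{-combination}\rangle$ and unwinding the definition of $\rho$ (using Lemma~\ref{lemma:vec-identity}(1) and Lemma~\ref{lemma:transposition}) yields $\Tr\Theta=\langle u,\rho\rangle_{_K}$, whence $\langle\Theta,\Theta\rangle\ge \frac1r|\langle u,\rho\rangle|^2=\frac1r\langle \ro_{\rho,\rho}u,u\rangle$. Since finitely supported $u$ are dense and both sides are continuous (the left by Lemma~\ref{prop:rank-one-slicing}, giving $AA^*\star BB^*\in\Bou_1(K)$ with the stated norm bound, and $\ro_{\rho,\rho}$ visibly bounded once $\|\rho\|\le\|A\|_2\|B\|_2\sqrt{\|T\|_2}$ is verified), the Loewner inequality follows on all of $K$. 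The bound $\|\rho\|\le\|A\|_2\|B\|_2\sqrt{\|T\|_2}$ itself I would get from $\rho=\mathrm{vec}(BA^\#)$-paired-with-$w$ type estimates: $\|\rho\|^2=\sum_i|\langle\mathrm{vec}(BA^\#),u_i\rangle|^2\le\|\mathrm{vec}(BA^\#)\|^2\sum_i\|u_i\|^2=\|BA^\#\|_2^2\,\|w\|^2\le\|B\|_2^2\|A^\#\|^2\|w\|^2$, and $\|A^\#\|\le\|A\|_2$, $\|w\|^2=\|T\|_2$.

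For optimality, I would follow Part III of the proof of Theorem~\ref{T:Schur-ext} verbatim in spirit but adapted to operators: pick $U_0\in\mathcal U(\star)$ with $\rk U_0={\bf r}(\star)$ (possible by definition of the supremum, taking $U_0$ of finite rank as close to ${\bf r}(\star)$ as needed, or of rank exactly ${\bf r}(\star)$ when that is finite), take its singular value decomposition $U_0=\sum_k\sigma_k\,\ro_{s_k,\overline{r_k}}$ with $\{s_k\}\subset H_2$, $\{r_k\}\subset H_1$ orthonormal, set $r_0:=\min(\dim\widetilde\Hil,\dim H_1,\dim H_2,{\bf r}(\star))$ (interpreted as $\infty$ where appropriate), and build $A,B$ by sending an orthonormal system in $\widetilde\Hil$ to $\sigma_k^{-1/2}r_k$ and $\sigma_k^{-1/2}s_k$ respectively, so that the resulting $\Theta$ is the identity on an $r_0$-dimensional subspace and zero elsewhere — forcing equality $|\Tr\Theta|^2=r_0\langle\Theta,\Theta\rangle$ and hence that no constant larger than $1/r_0=1/\min(\rk AA^*,\rk BB^*,{\bf r}(\star))$ can hold uniformly. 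The main obstacle I anticipate is bookkeeping rather than conceptual: making the chain of isometry identifications ($\mathrm{vec}$, $\conj_{H_i}$, $A\mapsto A^\#$, Lemma~\ref{lemma:vec-identity}) precise enough that $\Tr\Theta$ really comes out to $\langle u,\rho\rangle_{_K}$ with the conjugations in the right places, and handling the $r=\infty$ and $\dim=\infty$ cases cleanly so that the SVD construction in the optimality part still produces genuine Hilbert--Schmidt operators $A,B$ (one truncates to a finite $r_0'\le r_0$ and lets $r_0'\to\infty$, observing the constant $1/r_0'\to 0$ matches $1/\infty$).
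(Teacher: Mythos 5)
Your proposal is correct and follows essentially the same route as the paper's proof: transport the finite-dimensional Gramian argument through the vectorization isometry and $\#$-transposition to write $\langle(AA^*\star BB^*)e_j,e_i\rangle_K = \langle G_j,G_i\rangle_{\Bou_2(\widetilde{\Hil}^{\#})}$ with $G_j = B^*\mathrm{vec}^{-1}(u_j)(A^\#)^*$, bound the rank of the resulting finite sums, apply the trace--rank inequality, and verify optimality via an SVD-based choice of $A,B$, truncating when dimensions or ${\bf r}(\star)$ are infinite. The only incidental deviations are cosmetic (you prove the trace--rank inequality via the projection-plus-Cauchy--Schwarz argument used in the paper's finite-dimensional Part II, whereas the Hilbert-space proof uses an orthonormal basis of $(\ker G)^\perp$; and you split $\sigma_k^{-1/2}$ between $A$ and $B$ where the paper puts $\sigma_k^{-1}$ into $B$ alone), and the explicit verification that $\Tr G(x) = \langle x,\rho\rangle_K$, which you flag as bookkeeping, is exactly what the paper carries out in Step~5 of Part~I.
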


\begin{proof} \textbf{Part I. Inequality.} Recall from Lemma~\ref{prop:rank-one-slicing} that
\[
\langle AA^* \star BB^* e_j,e_i \rangle_{_{K}} 
= \langle AA^* \otimes BB^* u_j,u_i \rangle_{_{H}}.
\]\medskip
Let $X_i:=\mathrm{vec}^{-1}(u_i)\in \Bou_2(H_1^{\#},H_2)$.

\textit{Step 1. The Gramian structure.}
Recall that for $u_i=\mathrm{vec}(X_i)$, we have
\begin{align*}
\langle (AA^{*}\star BB^{*})e_{j},e_{i}\rangle_{_{K}}
=
\big\langle
(AA^{*}\otimes BB^{*})\mathrm{vec}(X_j),\,\mathrm{vec}(X_i)
\big\rangle_{_{H}}.    
\end{align*}
Applying Lemma~\ref{lemma:vec-identity} to the above yields
\begin{align*}
\langle (AA^{*}\star BB^{*})e_{j},e_{i}\rangle_{_{K}}
&=\big\langle\mathrm{vec}\big(BB^{*}X_j (A^{\#})^*A^{\#}\big), \mathrm{vec}(X_i) \big\rangle_{_{H}}\\
=\langle BB^{*}X_j (A^{\#})^*A^{\#},  X_i \rangle_{_{\Bou_2(H_1^\#,H_2)}}
&=\Tr_{_{H_2}}\left(BB^{*}X_j (A^{\#})^*A^{\#}X_i^{*}\right)\\
=\Tr_{_{H_1^{\#}}}\left(X_i^{*}BB^{*}X_j (A^{\#})^*A^{\#}
\right) 
&=\Tr_{_{\widetilde{\Hil}^{\#}}}\left(
A^{\#}X_i^{*}BB^{*}X_j (A^{\#})^*
\right)\\
=\Tr_{_{\widetilde{\Hil}^{\#}}}\left(
\big(A^{\#}X_i^{*}B\big)\; \big(B^{*}X_j (A^{\#})^*\big)
\right)
&= \left \langle B^{*}X_j (A^{\#})^*,B^{*}X_i (A^{\#})^* \right\rangle_{_{\Bou_2(\widetilde{\Hil}^\#)}},
\end{align*}
where we use the cyclical property of trace, as all the operators involved above are Hilbert--Schmidt (see {\cite[\S 18]{conway2000course}} and \cite[p. 141]{kadison1997fundamentals}). Therefore we have
\begin{align}\label{eq:Gram}
\langle (AA^*\star BB^*)e_j,e_i\rangle_{_{K}}
=\langle G_j, G_i\rangle_{_{\Bou_2(\widetilde{\Hil}^{\#})}} \qquad(i,j\in I),
\end{align}
where $G_j:=B^{*}X_j (A^{\#})^* \in \Bou_2(\widetilde{\Hil}^\#)$.\medskip

\textit{Step 2. Rank considerations.} If $\min(\rk AA^*, \rk BB^*, {\bf r}(\star)) = \infty$, then there is nothing to prove. So suppose $\min(\rk AA^*, \rk BB^*, {\bf r}(\star))<\infty$.\medskip

For finitely supported $x= \sum_{i\in I}x_i e_i \in K$, using \eqref{eq:Gram} we have
\begin{align*}
\langle (AA^*\star BB^*)x,x\rangle_{_{K}}
&= \sum_{i,j\in I}\overline{x_i}\,x_j\,\langle (AA^*\star BB^*)e_j,e_i\rangle_{_{K}}\\
= \sum_{i,j\in I}\overline{x_i}\,x_j\,\langle G_j,G_i\rangle_{_{\Bou_2(\widetilde{\Hil}^\#)}} 
&= \Big\langle \sum_{j\in I} x_j G_j,\ \sum_{i\in I} x_i G_i \Big\rangle_{_{\Bou_2(\widetilde{\Hil}^\#)}}= \|G(x)\|_{\Bou_2(\widetilde{\Hil}^{\#})}^2,
\end{align*}
where, for $X(x):=\sum_{j\in I}x_j X_j$, we let
\begin{align*}
G(x):=\sum_{j\in I} x_j G_j = B^* \Big( \sum_{j\in I}x_j X_j\Big) (A^\#)^* = B^* X(x) (A^\#)^* \in \Bou_2(\widetilde{\Hil}^{\#}).
\end{align*}
Notice that:
\begin{align*}
\rk G(x) 
= \rk B^* X(x) (A^\#)^* 
&\leq \min(\rk B^*, \rk X(x), \rk (A^\#)^*) \\
= \min(\rk BB^*, \rk X(x), \rk AA^*)
&\leq \min(\rk BB^*, {\bf r}(\star), \rk AA^*) <\infty.
\end{align*}
Therefore $G(x)$ has finite rank $r(x) := \rk G(x) \leq \min(\rk BB^*, {\bf r}(\star), \rk AA^*)$.\medskip

\textit{Step 3. The trace-rank inequality.} Let $G=G(x)\in \mathcal B_2(\widetilde{\Hil}^\#)$ have finite rank $r=r(x)$. Choose an orthonormal basis $\{h_1,\dots,h_r\}$ of $(\ker G)^\perp$ and extend
it to an orthonormal basis $\{h_j\}_{j\in J}$ of $\widetilde{\Hil}^\#$.
Then $Gh_j=0$ for $j \in J\setminus \{1,\dots,r\}$, hence
\[
\Tr(G)=\sum_{j\in J }\langle Gh_j,h_j\rangle
      =\sum_{j=1}^r \langle Gh_j,h_j\rangle.
\]
Thus, by Cauchy--Schwarz in $\C^r$ and then in $\widetilde{\Hil}^{\#}$,
\begin{align*}
|\Tr(G)|^2
&=\left|\sum_{j=1}^r \langle Gh_j,h_j\rangle\right|^2
\le r\sum_{j=1}^r |\langle Gh_j,h_j\rangle|^2\\
&\le r\sum_{j=1}^r \|Gh_j\|^2
\le r\sum_{j\in J} \|Gh_j\|^2
= r\|G\|_{_{\Bou_2(\widetilde{\Hil}^{\#})}}^2.
\end{align*}
We will use this inequality in the following steps.\medskip

\textit{Step 4. The norm bound.} We will prove that $\rho\in K$. Indeed, suppose $F\subset K$ is finite, and compute
\begin{align*}
\Big\lVert \sum_{i\in F} \langle \mathrm{vec}(BA^\#),u_i\rangle_{_{H}}e_i \Big\rVert^2 
&= \sum_{i\in F} \lvert \langle \mathrm{vec}(BA^\#),u_i\rangle_{_{H}} \rvert^2 \\ 
\leq  \sum_{i\in F} \lVert \mathrm{vec}(BA^\#) \rVert_{_{H}}^2\, \lVert u_i \rVert_{_{H}}^2
&\leq \lVert BA^\# \rVert^2_{_{\Bou_2(H_1^\#,H_2)}} \sum_{i\in I} \lVert u_i \rVert^2_{_{H}} \\
&= \lVert BA^\# \rVert^2_{_{\Bou_2(H_1^\#,H_2)}} \lVert w \rVert^2_{_{\Hil}}.
\end{align*}
Thus, $\rho\in K$, and $\lVert BA^\# \rVert_{_{\Bou_2(H_1^\#,H_2)}}\leq \lVert A \rVert_{_{\Bou_2(\widetilde{\Hil},H_1)}} \lVert B\rVert_{_{\Bou_2(\widetilde{\Hil}^\#,H_2)}}$ gives the desired norm bound on $\rho$.
\medskip

\textit{Step 5. The desired inequality.} From the previous steps we have
\begin{align*}
    |\Tr (G(x))|^2 \leq \min(\rk BB^*, {\bf r}(\star), \rk AA^*) \, \lVert G(x)\rVert^{2}_{\Bou_2(\widetilde{\Hil}^\#)}.
\end{align*}
Recall that $G_j=B^{*}X_j (A^{\#})^* \in \Bou_2(\widetilde{\Hil}^\#)$, and its trace is given by
\begin{align*}
    \Tr_{_{\widetilde{\Hil}^\#}}(G_j) 
    &= \Tr_{_{\widetilde{\Hil}^\#}}(B^{*}X_j (A^{\#})^*)
    = \Tr_{_{H_2}}(X_j (BA^{\#})^*) \\
    = \langle X_j, BA^\#\rangle_{_{\Bou_2(H_1^{\#},H_2)}}
    &= \overline{\langle BA^\#, X_j\rangle}_{_{\Bou_2(H_1^{\#},H_2)}}
    = \overline{\langle \mathrm{vec}(BA^\#), u_j\rangle}_{_{H}}
\end{align*}
Therefore,
\begin{align*}
\left| \Tr (G(x)) \right|^2 
&= \left| \sum_{j\in I} x_j \Tr(G_j) \right|^2 \\
&= \left| \sum_{j\in I} x_j \overline{\langle \mathrm{vec}(BA^\#), u_j\rangle}_{_{H}}\right|^2 
= \left| \langle x, \rho\rangle_{_{K}} \right|^2\\
&= \langle x, \rho\rangle_{_{K}} \langle \rho, x\rangle_{_{K}} =  \Big\langle \langle x, \rho\rangle_{_{K}}\rho, x\Big \rangle_{_{K}} 
=  \langle \ro_{\rho,\rho}x, x  \rangle_{_{K}}.
\end{align*}
Thus, using the computation in Step 2 and the inequality at the beginning of this step, we have shown
\[
\Big\langle \big(AA^*\star BB^*-\frac{1}{\min(\rk AA^*, \rk BB^*, {\bf r}(\star))}\ro_{\rho,\rho}\big) x,x\Big\rangle_K \geq 0
\]
for all finitely supported $x\in K$. Since such vectors are dense in $K$ and the quadratic forms are continuous, it extends to all $x\in K$, as desired.\medskip

{\bf Part II. Optimality.} We break this into two cases.\medskip

\textit{Case 1.} Suppose ${\bf r}(\star)<\infty$. Then the supremum is attained, and so there exists $X_0\in \mathcal{U}(\star)$ such that $\rk X_0={\bf r}(\star)$. Thus, from the singular value decomposition of $X_0$, there exist orthonormal $\{\overline{x_1},\dots,\overline{x_{{\bf r}(\star)}}\}\subset H_1^\#$ and $\{{y_1},\dots, {y_{{\bf r}(\star)}}\}\subset H_2$, and positive real numbers $\{\sigma_1,\dots,\sigma_{{\bf r}(\star)}\}$ such that
\[
X_0 = \sum_{j=1}^{{\bf r}(\star)} \sigma_j \ro_{y_j,\overline{x_j}} = \sum_{j=1}^{r} \sigma_j \ro_{y_j,\overline{x_j}} + \sum_{j=r+1}^{{\bf r}(\star)} \sigma_j \ro_{y_j,\overline{x_j}},
\]
where $r:= \min\{\dim{H_1},\,\dim H_2,\, \dim \widetilde{\Hil},\, {\bf r}(\star)\}$. Suppose $\{z_1,\dots,z_r\}\subset \widetilde{\Hil}$ is an orthonormal set, and define $A: \widetilde{\Hil}\to H_1$ and $B:\widetilde{\Hil}^{\#}\to H_2$ by:
\[
A = \sum_{j=1}^{r} \ro_{x_j,z_j}\qquad\mbox{and}\qquad  B = \sum_{j=1}^{r} \sigma_j^{-1} \ro_{y_j,\overline{z_j}}.
\]
Therefore $\rk AA^* = \rk BB^* = r$ and so $\min(\rk AA^*, \rk BB^*, {\bf r}(\star))=r$. Since
\[
B^*= \sum_{j=1}^{r} \sigma_j^{-1} \ro_{\overline{z_j},y_j}, \qquad 
(A^\#)^* = \sum_{j=1}^{r} \ro_{\overline{x_j},\overline{z_j}}, 
\]
it follows that 
\[
B^* X_0 (A^\#)^* = \sum_{j=1}^{r} \ro_{\overline{z_j},\overline{z_j}}.
\]
Thus $\Tr (B^* X_0 (A^\#)^*) = r = \lVert B^* X_0 (A^\#)^* \rVert_2^2 = \rk (B^* X_0 (A^\#)^*)$, and so the trace-rank inequality from Step 5 of Part 1 holds with equality. In other words, there exists $x_0\in K$, and $A$ and $B$ (as above) such that $\min (\rk AA^*, \rk BB^*, {\bf r}(\star))=r$ and
\[
\Big\langle \Big(AA^* \star BB^* - \frac{1}{r} \ro_{\rho,\rho}\Big) x_0, x_0 \Big\rangle_{_{K}} = 0.
\]
Thus, for any $\epsilon>0$,
\[
\Big\langle \Big(AA^* \star BB^* - (1/r+\epsilon) \ro_{\rho,\rho}\Big) x_0, x_0 \Big\rangle_{_{K}} < 0.
\]
Therefore, one cannot improve the coefficient $1/\min (\rk AA^*, \rk BB^*, {\bf r}(\star))$ uniformly over all $A,B$.\medskip

\textit{Case 2.} Suppose ${\bf r}(\star)=\infty$. Then, similar to the previous case, suppose $X_0\in \mathcal{U}(\star)$ with singular value decomposition
\[
X_0 = \sum_{j=1}^{\infty} \sigma_j \ro_{y_j,\overline{x_j}} = \sum_{j=1}^{r} \sigma_j \ro_{y_j,\overline{x_j}} + \sum_{j=r+1}^{\infty} \sigma_j \ro_{y_j,\overline{x_j}},
\]
where $r=\min\{\dim{H_1},\dim H_2, \dim \widetilde{\Hil}, {\bf r}(\star)\}$ if the right hand side is finite, otherwise take $r$ to be any arbitrary positive. Then follow the same steps constructing $A$ and $B$ as in the above case.
\end{proof}

\subsection{Proof of Theorem~\ref{T:Hilbert-tensor}}

\begin{proof}[Proof of Theorem~\ref{T:Hilbert-tensor}] There is nothing to prove if $\min(\rk AA^*, \rk BB^*)=\infty$ so suppose it is finite. We follow the construction and notation in Theorem~\ref{T:thmC-for-rank1-fixed}. Suppose $\{e_\alpha\}_{\alpha\in\A}$ and $\{f_\beta\}_{\beta\in \B}$ are orthonormal bases of $H_1$ and $H_2$, respectively. We choose
\begin{align*}
K=H_1\otimes H_2 \qquad \mbox{and}\qquad  \{e_{\alpha}\otimes f_\beta\}_{(\alpha,\beta)\in \A\times \B}
\end{align*}
as the orthonormal basis of $K$. We divide the proof into two parts.\medskip

\textbf{Part I. $\max(\rk AA^*,\rk BB^*)<\infty$.} Then there exists finite $\A'\subset \A$ with $\Ran (AA^*) = \mathrm{span} \{e_{\alpha}\}_{\alpha\in \A'}$ and finite $\B'\subset\B$ with $\Ran (BB^*) = \mathrm{span}\{f_{\beta}\}_{\beta\in \B'}$.\medskip

\textit{Step 1. The admissible vector.} Letting $\1$ denote the indicator function, consider the vector
\begin{align*}
w:=\sum_{(\alpha,\beta)\in \A\times \B} (e_\alpha\otimes f_\beta) \otimes (\1_{_{(\alpha,\beta) \in \A'\times\B'}}e_\alpha\otimes f_\beta).
\end{align*}
Since both $\A'$ and $\B'$ are finite,  $w\in \Hil:=K\otimes(H_1\otimes H_2)$. Moreover, the vectors $\{u_i\}_{i\in I}$ in Theorem~\ref{T:thmC-for-rank1-fixed} become
\[
\big\{\1_{_{(\alpha,\beta) \in \A'\times\B'}}e_\alpha\otimes f_\beta\big\}_{\alpha\in \A,\beta\in \B}.
\]

\textit{Step 2. The $\star$ product and the inequality.} We choose the $\star$ product
\[
\star
=\star(\ro_{w,w})\in \prd(H_1,H_2;H_1\otimes H_2;\{e_{\alpha}\otimes f_\beta\}_{(\alpha,\beta)\in \A\times\B}).
\]
Since $AA^*$ is zero outside the span of $\{e_{\alpha}\}_{\alpha\in \A'}$ and $BB^*$ is zero outside the span of $\{f_{\beta}\}_{\beta\in\B'}$, using Lemma~\ref{prop:rank-one-slicing}, we have
\[
AA^* \star BB^* = AA^* \otimes BB^*.
\]
Thus, from Theorem~\ref{T:thmC-for-rank1-fixed}, we have 
\begin{align*}
AA^* \otimes BB^* \;  \succcurlyeq \; \frac{1}{\min\big(\rk AA^*,\rk BB^*,{\bf r}(\star)\big )} \ro_{\rho,\rho} \;  \succcurlyeq \; 0,
\end{align*}
where
\begin{align*}
    \rho &\; := \; \sum_{(\alpha,\beta)\in \A\times \B} \; \langle \, \mathrm{vec}(BA^\#),\1_{_{(\alpha,\beta)\in \A'\times \B'}} e_\alpha\otimes f_\beta\,\rangle_{_{H_1\otimes H_2}}\; e_\alpha\otimes f_\beta \; \in \; K.
\end{align*}

\textit{Step 3. Showing $\rho=\mathrm{vec}(BA^\#)$.} Let $P_1$ be the orthogonal projection of $H_1$ onto $\mathrm{span}\{e_{\alpha}\}_{\alpha\in \A'}$ and $P_2$ be the orthogonal projection on $H_2$ onto $\mathrm{span}\{f_{\beta}\}_{\beta\in \B'}$. Since $\Ran (AA^*) = \Ran A = \mathrm{span}\{e_{\alpha}\}_{\alpha\in \A'}$, we have $P_1A=A$. From Lemma~\ref{lemma:transposition}, we get $A^\# = (P_1 A)^\# = A^\# P_1^\#$. Similarly, $\Ran(BB^*)=\Ran(B)=\mathrm{span}\{f_{\beta}\}_{\beta\in \B'}$ implies $P_2B=B$, hence
\[
BA^\# = (P_2B)(A^\#P_1^\#)= P_2\,(BA^\#)\,P_1^\#.
\]
Consequently,
\[
(BA^\#)\,\overline{e_\alpha}=0 \quad (\alpha\notin\A'),
\qquad
\langle (BA^\#)\,\overline{e_\alpha},\, f_\beta\rangle=0 \quad (\beta\notin\B').
\]
Using Lemma~\ref{lemma:vec-identity}, we have
\[
\langle \mathrm{vec}(BA^\#),\, e_\alpha\otimes f_\beta\rangle_{_{H_1\otimes H_2}}
= \big\langle (BA^\#)\,\overline{e_\alpha},\, f_\beta\big\rangle_{_{H_2}},
\]
and so we deduce that
\[
\langle \mathrm{vec}(BA^\#),\, e_\alpha\otimes f_\beta \rangle_{_{H_1\otimes H_2}} = 0
\qquad\text{provided }(\alpha,\beta)\notin \A'\times\B'.
\]
Therefore $\mathrm{vec}(BA^\#)\in \mathrm{span}\{e_\alpha\otimes f_\beta:(\alpha,\beta)\in \A'\times\B'\}$, and hence
\begin{align*}
\rho
&=\sum_{(\alpha,\beta)\in \A\times \B}\1_{_{(\alpha,\beta)\in\A'\times\B'}}
\langle \mathrm{vec}(BA^\#),\,e_\alpha\otimes f_\beta \rangle_{_{H_1\otimes H_2}}\, e_\alpha\otimes f_\beta\\
&=\sum_{(\alpha,\beta)\in \A\times \B}
\langle \mathrm{vec}(BA^\#),\,e_\alpha\otimes f_\beta\rangle_{_{H_1\otimes H_2}}\, e_\alpha\otimes f_\beta
=\mathrm{vec}(BA^\#),
\end{align*}
where the last equality is simply the expansion of $\mathrm{vec}(BA^\#)$ in the orthonormal basis
$\{e_\alpha\otimes f_\beta\}_{\alpha,\beta}$, and terms outside $\A'\times\B'$ are already $0$.\medskip

\textit{Step 4. Showing ${\bf r}(\star)\geq \rk BB^*$.} Notice that 
\[
\mathcal{U}(\star)\; :=\; \overline{\mathrm{span}}^{\Bou_2}\,\big\{\mathrm{vec}^{-1}(\1_{_{(\alpha,\beta) \in \A'\times\B'}}e_\alpha\otimes f_\beta):{(\alpha,\beta)\in \A\times \B}\big\},
\]
contains rank-one operators $\ro_{f_{\beta},\overline{e_{\alpha}}}$ for $\alpha\in \A'$ and $\beta\in \B'$, showing the final desired inequality in this case.\medskip

\textbf{Part II.  $\rk BB^* <\rk AA^*=\infty$.} Let $A_n \to A$ in $\Bou_2(\widetilde{\Hil},H_1)$ be finite rank approximations via, e.g., the singular value expansion of $A$, such that $\rk A_nA_n^* > \rk BB^*$. Then,
\[
\min(\rk A_nA_n^* ,\rk BB^*)=\min(\rk AA^*,\rk BB^*),
\]
and from the previous part
\[
A_nA_n^* \otimes BB^* \;  \succcurlyeq \; \frac{1}{\min\big(\rk AA^*,\rk BB^* \big )} \ro_{\mathrm{vec}(BA_n^\#),\mathrm{vec}(BA_n^\#)} \;  \succcurlyeq \; 0.
\]

\textit{Step 1. $A_nA_n^*\to AA^*$ in trace norm.} Using the ideal property $\|XY\|_1\le \|X\|_2\|Y\|_2$ for Hilbert--Schmidt operators {\cite[Theorem~7.8]{weidmann1980linear}}, we get
\begin{align*}
\|AA^*-A_nA_n^*\|_1
&=\|(A-A_n)A^*+A_n(A-A_n)^*\|_1\\
&\le \|A-A_n\|_2\|A\|_2+\|A_n\|_2\|A-A_n\|_2.    
\end{align*}
Since $\|A_n\|_2\to \|A\|_2$ and $\|A-A_n\|_2\to 0$, it follows that $\|AA^*-A_nA_n^*\|_1\to0$, hence also $\|AA^*-A_nA_n^*\|\to0$ in operator norm.\medskip

\textit{Step 2. The tensor products converge.} Because $BB^*$ is trace-class and $\|X\otimes Y\| = \|X\|\,\|Y\|$ {\cite[p. 146]{kadison1997fundamentals}},
\[
\|(AA^*-A_nA_n^*)\otimes BB^*\|
= \|AA^*-A_nA_n^*\|\,\|BB^*\|\ \longrightarrow\ 0.
\]
Hence $A_nA_n^*\otimes BB^*\to AA^*\otimes BB^*$ in the operator norm.\medskip

\textit{Step 3. Convergence of rank-one terms.} First note that $BA_n^\#\to BA^\#$ in trace norm, hence in Hilbert--Schmidt norm:
\[
\|BA_n^\#-BA^\#\|_1 \le \|B\|_2\,\|A_n^\#-A^\#\|_2=\|B\|_2\,\|A_n-A\|_2\to 0,
\]
and $\|T\|_2\le \|T\|_1$ for trace-class $T$. Since $\mathrm{vec}:\Bou_2(H_1^\#,H_2)\to H_1\otimes H_2$ is an isometry, this implies
$\mathrm{vec}(BA_n^\#)\to \mathrm{vec}(BA^\#)$ in $H_1\otimes H_2$. Set $v_n:=\mathrm{vec}(BA_n^\#)$ and $v:=\mathrm{vec}(BA^\#)$. For rank-one operators one has $\|\ro_{x,y}\|=\|x\|\,\|y\|$, hence
\begin{align*}
\|\ro_{v_n,v_n}-\ro_{v,v}\|
&=\|\ro_{v_n,v_n}+\ro_{-v,v_n}+\ro_{v,v_n}+\ro_{v,-v}\| \\
=\|\ro_{v_n-v,v_n}+\ro_{v,v_n-v}\| 
&\le \|v_n-v\|\,\|v_n\|+\|v\|\,\|v_n-v\|.    
\end{align*}
Since $v_n\to v$, the right-hand side tends to $0$, so
$\ro_{v_n,v_n}\to \ro_{v,v}$ in operator norm. Since the left-hand side of the desired inequality also converges in operator norm and the positive cone is closed under operator-norm limits {\cite[Proposition 3.5]{conway2000course}}, the final Loewner inequality holds.
\end{proof}

\subsection{Proof of Theorem~\ref{T:Schur-ext-Hil}}

We begin with a lemma.

\begin{lemma}\label{lemma:trace-case}
With notation in Theorem~\ref{T:Schur-ext-Hil}, consider a positive operator $T\in \Bou_1(\Hil)$ with spectral resolution in the trace-norm: $
T=\sum_{n=1}^\infty \lambda_n \, \ro_{w_n, w_n}$. Suppose $\star_n=\star_n(\ro_{w_n, w_n})$ for $n\geq 1$, and $\star = \star (T)$. Then 
\[
P\star Q=\sum_{n=1}^{\infty} \lambda_n( P \star_n Q) \qquad (P\in \Bou_2(H_1),~Q\in \Bou_2(H_2)),
\]
with convergence in trace norm, with bound $\lVert P\star Q \rVert_1 \leq  \|T\|_1\,\|P\|_2\,\|Q\|_2 $.
\end{lemma}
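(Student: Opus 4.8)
The plan is to reduce everything to the slicing/stitching machinery already in place, and then push the convergence through the bilinear pairing that defines $\star$. First I would invoke Proposition~\ref{prop:slicing+stitching}(1): since $T=\sum_{n\geq 1}\lambda_n\ro_{w_n,w_n}$ converges in $\Bou_1(\Hil)$, the slicing map $(u,v)\mapsto T_{u,v}=J_u^*TJ_v$ is continuous and linear in $T$, so for every pair $i,j\in I$ the slices satisfy $T_{ij}=\sum_{n\geq 1}\lambda_n (\ro_{w_n,w_n})_{ij}$ with convergence in $\Bou_1(H)$. By Lemma~\ref{prop:rank-one-slicing}, writing $w_n=\sum_{i\in I}e_i\otimes u_i^{(n)}$, we have $(\ro_{w_n,w_n})_{ij}=\ro_{u_i^{(n)},u_j^{(n)}}$, which matches the slices used to define $\star_n$.

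Next I would establish the claimed trace-norm bound and convergence at the level of the operators $P\star Q$. Recalling from the proof of Lemma~\ref{prop:rank-one-slicing} that $P\star_n Q=V_n^*(P\otimes Q)V_n$ where $V_n\in\Bou_2(K,H)$ is given by $V_n e_i=u_i^{(n)}$ with $\|V_n\|_2^2=\|w_n\|^2$, the natural approach is instead to realize $P\star Q$ directly from $T$. Using the adjoint maps $\mathcal{J}:K\to\ell^2(I;H)$, $\mathcal{J}^*$ from the stitching proof, or more simply the isometry $\Bou_2(\Hil)\ni T\mapsto$ its action, I would show $\langle (P\star Q)e_j,e_i\rangle_K=\langle P\otimes Q,T_{ij}\rangle_{\Bou_2(H)}=\sum_{n}\lambda_n\langle P\otimes Q,(\ro_{w_n,w_n})_{ij}\rangle=\sum_n\lambda_n\langle (P\star_n Q)e_j,e_i\rangle_K$, the interchange being justified since $T_{ij}\mapsto\langle P\otimes Q,T_{ij}\rangle$ is a bounded linear functional on $\Bou_1(H)$ (indeed on $\Bou_2(H)$) and the sum converges in $\Bou_1(H)$. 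This gives the identity entrywise; to upgrade to trace-norm convergence of the operators themselves, I would use that each $P\star_n Q=V_n^*(P\otimes Q)V_n$ has $\|P\star_n Q\|_1\leq\|V_n\|_2^2\|P\|_2\|Q\|_2=\|w_n\|^2\|P\|_2\|Q\|_2$, so $\sum_n\lambda_n\|P\star_n Q\|_1\leq\|P\|_2\|Q\|_2\sum_n\lambda_n\|w_n\|^2=\|P\|_2\|Q\|_2\,\|T\|_1$, using that $\ro_{w_n,w_n}$ being an orthogonal spectral resolution forces $\|w_n\|^2=\|\ro_{w_n,w_n}\|_1$ (after normalizing, or absorbing the norm into $\lambda_n$) and $\sum_n\lambda_n\|\ro_{w_n,w_n}\|_1=\|T\|_1$. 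Hence $\sum_n\lambda_n(P\star_n Q)$ is absolutely convergent in the Banach space $\Bou_1(K)$; its sum is a trace-class operator with $\|{\cdot}\|_1\leq\|T\|_1\|P\|_2\|Q\|_2$, and it agrees entrywise with $P\star Q$, so the two coincide.

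The main obstacle I anticipate is the bookkeeping around the norms of the $w_n$: the statement writes $T=\sum\lambda_n\ro_{w_n,w_n}$ without asserting $\|w_n\|=1$, so I must be careful that the quantity controlling the sum is $\sum_n\lambda_n\|w_n\|^2$ and that this equals $\Tr(T)=\|T\|_1$ (valid because $T\succcurlyeq 0$ and the $\ro_{w_n,w_n}/\|w_n\|^2$ are mutually orthogonal rank-one projections, so $\Tr(T)=\sum_n\lambda_n\|w_n\|^2$). A secondary technical point is justifying that entrywise agreement of two trace-class operators on the orthonormal basis $\{e_i\}_{i\in I}$ — together with both being genuine bounded operators — suffices to conclude equality; this is immediate since $\langle (P\star Q)e_j,e_i\rangle_K=\langle(\sum_n\lambda_n P\star_n Q)e_j,e_i\rangle_K$ for all $i,j$ determines the operator on a dense subspace and both sides are bounded. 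Everything else is a routine application of the two-sided ideal properties of $\Bou_1$ and $\Bou_2$ and the continuity of slicing already proved in Proposition~\ref{prop:slicing+stitching}.
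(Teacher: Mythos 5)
Your proposal is correct and follows essentially the same route as the paper: slice via Proposition~\ref{prop:slicing+stitching}, use the rank-one bound from Lemma~\ref{prop:rank-one-slicing}, establish absolute convergence of the partial sums in $\Bou_1(K)$, and identify the limit with $P\star Q$ by matching matrix entries. The only real difference is cosmetic ordering (you argue entrywise agreement first, then upgrade; the paper first shows the partial sums are Cauchy in $\Bou_1(K)$, then matches entries), and your caution about $\|w_n\|$ is unnecessary since the paper's ``spectral resolution'' already guarantees the $w_n$ form an orthonormal family, so $\|w_n\|=1$ and $\|T\|_1=\sum_n\lambda_n$ without any normalization bookkeeping.
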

\begin{proof} Letting $T^{(n)}:=\ro_{w_n,w_n}$ and using Proposition~\ref{prop:slicing+stitching},  we have $ T_{e_i,e_j}= \sum_{n=1}^{\infty}\lambda_n \, T^{(n)}_{e_i,e_j}$, where convergence is in $\Bou_1(H)$ for each $i,j \in I$; $\lambda_n\ge 0$, $\sum_{n=1}^\infty\lambda_n<\infty$, and $(w_n)_{n\ge1}$ is an orthonormal family in $\Hil$. For $p\geq 1$:
\begin{align*}
\star_{S^{(p)}} := \star (S^{(p)}) \qquad \mbox{where}\qquad S^{(p)}:=\sum_{n=1}^{p} \lambda_n \, T^{(n)}.   
\end{align*}
By Proposition~\ref{prop:slicing+stitching}, we have $P\star_{S^{(p)}}Q = \sum_{n=1}^{p} \lambda_n (P\star_n Q)$. We break the rest of the proof into two steps.\medskip

\textit{Step 1. Showing $\sum_{n=1}^{\infty} \lambda_n( P \star_n Q) \in \Bou_1(K)$.} Using Lemma~\ref{prop:rank-one-slicing} for the norm-bounds, for positive integers $p<q$, using the triangle inequality for the trace-norm, we have
\begin{align*}
\lVert P\star_{S^{(q)}}Q - P\star_{S^{(p)}}Q \rVert_1 
&\leq  \sum_{n=p+1}^q \big\|\lambda_n (P\star_n Q)\big\|_1
= \sum_{n=p+1}^q \lambda_n\,\|P\star_n Q\|_1 \\
\le \sum_{n=p+1}^q \lambda_n\,\|P\|_2\,\|Q\|_2 
&\leq  \|P\|_2\,\|Q\|_2 \sum_{n=p+1}^q \lambda_n \to 0 \mbox{ as }p<q\to \infty.
\end{align*}
It follows that the partial sums $P\star_{S^{(p)}}Q$ form a Cauchy sequence in $\mathcal B_1(K)$, and so, as $(\mathcal B_1(K),\|\cdot\|_1)$ is a Banach space, they converge in the trace-norm to some $S\in \mathcal B_1(K)$. In other words, the series
\[
S:=\sum_{n=1}^\infty \lambda_n (P\star_n Q)
\]
defines a trace-class operator on $K$. \medskip

\textit{Step 2. Identifying $S$ and $P\star Q$.} Fix $i,j\in I$, and note that by definition of the products and by Proposition~\ref{prop:slicing+stitching} we have,
\begin{align*}
\langle (P\star Q)e_j, e_i\rangle_{_{K}}
  = \Tr\big(T_{e_i,e_j}^*(P\otimes Q)\big)
  &= \Tr\Big(\Big(\sum_{n=1}^\infty \lambda_n T^{(n)}_{e_i,e_j}\Big)^*(P\otimes Q)\Big)\\
  = \Tr\Big(\sum_{n=1}^\infty \lambda_n (T^{(n)}_{e_i,e_j})^*(P\otimes Q)\Big)
&= \sum_{n=1}^\infty \lambda_n\,\Tr\big((T^{(n)}_{e_i,e_j})^*(P\otimes Q)\big).    
\end{align*}
On the other hand we also have,
\begin{align*}
\langle Se_j,e_i\rangle_{_{K}}
 &= \Big\langle \sum_{n=1}^\infty \lambda_n (P\star_n Q)e_j,e_i\Big\rangle_{_{K}} = \sum_{n=1}^\infty \lambda_n\,\langle(P\star_n Q)e_j,e_i\rangle_{_{K}} \\
 &= \sum_{n=1}^\infty \lambda_n\,\Tr\big((T^{(n)}_{e_i,e_j})^*(P\otimes Q)\big).
\end{align*}
Therefore, for every $i,j\in I$, $\langle (P\star Q)e_j,e_i\rangle_{_{K}} = \langle Se_j,e_i\rangle_{_{K}}$. This implies $
P\star Q = S = \sum_{n=1}^\infty \lambda_n (P\star_n Q)$ in $\mathcal B_1(K)$. Finally we have
\begin{align*}
\lVert P\star Q \rVert_1 
&\leq \sum_{n=1}^\infty \|\lambda_n (P\star_n Q)\|_1
\le \|P\|_2\,\|Q\|_2 \sum_{n=1}^\infty \lambda_n\\
&= \|T\|_1\,\|P\|_2\,\|Q\|_2 
<\infty.    
\end{align*}
This shows the desired norm bound.
\end{proof}

\begin{proof}[Proof of Theorem~\ref{T:Schur-ext-Hil}] We divide the proof into four parts.\medskip

\textbf{Part I. Norm bounds.} The case of $T \in \Bou_1(\Hil)$ follows from Lemma~\ref{lemma:trace-case}. We consider the $T \in \Bou_2(\Hil)$ case. Consider the product $\star=\star(T)\in \prd(2)$. Let $\{f_\alpha\}_{\alpha}$ be an orthonormal basis of $H$. Then the family $\{e_i\otimes f_\alpha\}_{i\in I,\alpha}$ is an orthonormal
basis of $\Hil$. By the definition of the slices $T_{ij}:=T_{e_i,e_j}$, we have
\[
\langle T_{ij} f_\beta,\,f_\alpha\rangle_{_{H}}
 = \langle T(e_j\otimes f_\beta),\, e_i\otimes f_\alpha\rangle_{_{\Hil}}.
\]
From Proposition~\ref{prop:slicing+stitching}, each $T_{ij}\in \Bou_2(H)$, and so we write
\[
\|T_{ij}\|_2^2
 = \sum_{\alpha,\beta} \big|\langle T_{ij}f_\beta,f_\alpha\rangle\big|^2
 = \sum_{\alpha,\beta} \big|\langle                      T(e_j\otimes f_\beta),e_i\otimes f_\alpha\rangle\big|^2.
\]
Using the above, since $T\in \Bou_2(\Hil)$, we also have
\begin{align*}
\|T\|_2^2 =\sum_{i,j}\sum_{\alpha,\beta}
      \big|\langle T(e_j\otimes f_\beta),e_i\otimes f_\alpha\rangle\big|^2 = \sum_{i,j\in I} \|T_{ij}\|_2^2.
\end{align*}
Now, for each $i,j\in I$, by the Cauchy--Schwarz inequality we have
\[
\big|\langle P\star Q e_j, e_i\rangle_{_{K}}\big|^2 = |\langle P\otimes Q,T_{ij}\rangle_{_{\Bou_2(H)}}|^2
 \le \|T_{ij}\|_2^2\,\|P\otimes Q\|_2^2.
\]
Summing over $i,j\in I$, we obtain
\begin{align*}
\sum_{i,j\in I} \big|\langle P\star Qe_j,e_i\rangle_{_{K}}\big|^2
&\le \sum_{i,j\in I} \|T_{ij}\|_2^2\,\|P\otimes Q\|_2^2 \\
 = \|P\otimes Q\|_2^2 \sum_{i,j\in I} \|T_{ij}\|_2^2
 &= \|P\otimes Q\|_2^2\,\|T\|_2^2.
\end{align*}
We thus conclude that $P\star Q\in \Bou_2(K)$ and $\|P\star Q\|_2 \le \|T\|_2\,\|P\|_2\,\|Q\|_2$. \medskip

\textbf{Part II. Positivity.} Fix $\star = \star(T)\in \prd(2)$. Suppose that $P\; \succcurlyeq \;0$ and $Q\; \succcurlyeq \;0$. We will show that $P\star Q\; \succcurlyeq \;0$. Let $x=\sum_{i\in I}x_i e_i\in K$ with finite support. Then
\begin{align*}
\langle (P\star Q)x, x\rangle_{_{K}}
&= \sum_{i,j\in I}\overline{x_i}x_j \,\langle (P\star Q)e_j,e_i\rangle_{_{K}} \\
= \sum_{i,j\in I}\overline{x_i}x_j \,\Tr\big(T_{ij}^*(P\otimes Q)\big) 
&= \Tr\Big(\Big(\sum_{i,j\in I} \overline{x_i} x_j\,T_{ij}\Big)^*(P\otimes Q)\Big).
\end{align*}
For $\xi,\eta\in H$, we compute
\begin{align*}
\langle T_{x,x}\eta, \xi \rangle_{_H}
=\langle T (x\otimes\eta), x\otimes \xi \rangle_{_{\Hil}}
&= \Big\langle T \Big(\sum_{j\in I} x_j e_j \otimes \eta \Big), \sum_{i\in I} x_i e_i \otimes \xi \Big\rangle_{_{\Hil}} \\
= \sum_{i,j\in I} \overline{x_i}\,x_j\,\langle T(e_j\otimes\eta),e_i\otimes\xi\rangle_{_{\Hil}} 
&= \sum_{i,j\in I} \overline{x_i}\,x_j\,\langle T_{ij}\eta,\xi\rangle_{_H}\\
&= \Big\langle \Big(\sum_{i,j} \overline{x_i} x_j T_{ij}\Big)\eta ,\xi\Big\rangle_{_H}.
\end{align*}
Therefore $\sum_{i,j\in I}x_j \overline{x_i}\,T_{ij}$ is precisely the slice $T_{x,x}$. Moreover, it is immediate that $T \; \succcurlyeq \;0$ implies $T_{x,x} \; \succcurlyeq \;0$. Because $P\otimes Q \; \succcurlyeq \;0$, and the trace of a product of two positive operators is nonnegative, we get
\[
\langle (P\star Q)x,x\rangle_{_{K}}
 = \Tr(T_{x,x}^{1/2}(P\otimes Q)T_{x,x}^{1/2}) \;\ge\; 0,
\]
where $T_{x,x}^{1/2}$ is the positive square root of $T_{x,x}$, which uniquely exists {\cite[Proposition 3.3]{conway2000course}}. By density of finite-support vectors in $K$, $P\star Q \; \succcurlyeq \; 0$.\medskip

\textbf{Part III. The lower bound is trace-class.} We now show that 
\[
\sum_{n=1}^\infty \frac{\lambda_n}{m(n)}\,\ro_{\rho_n,\rho_n} \in \Bou_1(K),
\] 
where $m(n) := \min(\rk AA^*, \rk BB^*, {\bf r}(n))$. Since each $\ro_{\rho_n,\rho_n}$ is a rank-one positive operator, its trace norm equals its trace:
\begin{align*}
\big\| \ro_{\rho_n,\rho_n}\big\|_1
= \Tr(\ro_{\rho_n,\rho_n})
= \|\rho_n\|^2.    
\end{align*}
Thus, 
\[
\Big\|\frac{\lambda_n}{m(n)}\,\ro_{\rho_n,\rho_n}\Big\|_1
= \frac{\lambda_n}{m(n)}\,\|\rho_n\|^2.
\]
Since $m(n)\ge 1$ whenever $m(n)\in (0,\infty)$, we have $\frac{\lambda_n}{m(n)}\,\|\rho_n\|^2
 \le \lambda_n\,\|\rho_n\|^2$. Thus it suffices to show that $\sum_{n=1}^\infty \lambda_n\,\|\rho_n\|^2 <\infty$. By Theorem~\ref{T:thmC-for-rank1-fixed}, 
\[
\|\rho_n\|
 \le \|A\|_2\,\|B\|_2\,\sqrt{\|\ro_{w_n, w_n}\|_2}= \|A\|_2\,\|B\|_2\,\|w_n\|
 = \|A\|_2\,\|B\|_2.
\]
Hence $\sum_{n=1}^\infty \lambda_n\,\|\rho_n\|^2 \le \|A\|_2^2\,\|B\|_2^2 \sum_{n=1}^\infty \lambda_n<\infty$.\medskip

\textbf{Part IV. The limiting inequality.} For each integer $p\geq 1$ consider the following for $P=AA^*$ and $Q=BB^*$:
\[
\mathcal{X}_p := \sum_{n=1}^p \lambda_n (P\star_n Q)
\qquad \mbox{and}\qquad
\mathcal{Y}_p := \sum_{n=1}^p \frac{\lambda_n}{m(n)}\,\ro_{\rho_n,\rho_n}.
\]
We have shown that $\mathcal{X}_p\to P\star Q$ and $\mathcal{Y}_p\to \sum_{n\geq 1} \frac{\lambda_n}{m(n)} \ro_{\rho_n,\rho_n}$ in the trace norm, and hence in the operator norm. Moreover, from Theorem~\ref{T:thmC-for-rank1-fixed}, for each $n$,
$\lambda_n(P\star_n Q) \;\succcurlyeq\;
\frac{\lambda_n}{m(n)}\,\ro_{\rho_n,\rho_n},$ so summing over $n=1,\dots,p$ gives $\mathcal{X}_p \;\succcurlyeq\; \mathcal{Y}_p$ for all $p\geq 1$. Since $\mathcal{X}_p - \mathcal{Y}_p$ converges to $P\star Q - \sum_{n\geq 1} \frac{\lambda_n}{m(n)} \ro_{\rho_n,\rho_n}$ in the operator norm, and the positive cone is closed under operator norm limits {\cite[Proposition 3.5]{conway2000course}}, the desired inequality holds.
\end{proof}

\section{Canonical formulation}\label{S:canonical-form}

In the preceding sections, the $\star$ products and the associated lower bounds were developed using a fixed orthonormal basis of the Hilbert space $K$. We now show that these constructions for rank-one admissible operators admit a canonical formulation, and the lower bound is independent of the choice of the orthonormal basis.

As before, we let $H_1,H_2,K$ be Hilbert spaces, and set $H:=H_1\otimes H_2$ and $\Hil:=K\otimes H$. For the conjugate linear isometries $\conj_K:K \to K^\#$ and $\conj_H:H\to H^\#$, recall that $\conj_K\otimes \conj_H$ is the unique conjugate linear map $:K\otimes H \to K^\#\otimes H^\#$ defined on simple tensors $k\otimes h \in K\otimes H$ via 
\[
(\conj_K\otimes \conj_H) (k\otimes h):=(\conj_K k)\otimes (\conj_Hh) =\overline{k} \otimes \overline{h}.
\]
Using the isometry $\conj_K\otimes \conj_H$ we construct an operator $\W:K^\#\longrightarrow H^\#$ that will play a crucial role in the development below. 

\begin{defn}\label{defn:canonical}
Let $H_1,H_2,K$ be Hilbert spaces, and set $H:=H_1\otimes H_2$ and $\Hil:=K\otimes H$. Given $w\in \Hil$, we define $\W:K^\#\longrightarrow H^\#$ via 
\begin{align*}
\langle \W \overline{\xi},\ \overline{h}\rangle_{_{H^{\#}}} 
\;:=\; \Big\langle  (\conj_{K}\otimes \conj_{H})\, w,\ \overline{\xi}\otimes \overline{h}\Big\rangle_{_{K^\# \otimes H^{\#}}}
\end{align*}
for all $\xi\in K$ and $h\in H$.
\end{defn}

\begin{lemma}\label{lemma:canonical}
The map $\W$ in Definition~\ref{defn:canonical} is bounded and conjugate linear and $\conj_H^{-1}\,\W \in \Bou_2(K^\#,H)$ with $\Vert \conj_H^{-1}\W \rVert_2 = \lVert w \rVert$. Moreover, for an orthonormal basis $\{e_i\}_{i\in I}$ of $K$, and $w\in \Hil=K\otimes H$ with decomposition $w=\sum_{i\in I} e_{i}\otimes u_i$, we have that $\conj_H^{-1}\,\W \,\overline{e_i}=u_i$.
\end{lemma}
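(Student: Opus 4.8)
The plan is to prove the three assertions in the order they are stated: first that the defining relation genuinely produces a bounded conjugate-linear operator $\W$, then that $\conj_H^{-1}\W\,\overline{e_i}=u_i$ on a fixed orthonormal basis, and finally that $\conj_H^{-1}\W$ is Hilbert--Schmidt with the claimed norm — the last being an immediate consequence of the basis formula once the operator is known to be linear.

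First I would fix $\overline\xi\in K^\#$ and set $v:=(\conj_K\otimes\conj_H)\,w\in K^\#\otimes H^\#$, noting $\|v\|=\|w\|$ because $\conj_K\otimes\conj_H$ is a conjugate-linear isometry. Unwinding the conventions on the conjugate inner products, the assignment $\overline h\mapsto\langle v,\ \overline\xi\otimes\overline h\rangle_{K^\#\otimes H^\#}$ is a bounded conjugate-linear functional on $H^\#$ of norm at most $\|v\|\,\|\overline\xi\|$ (Cauchy--Schwarz), so Riesz representation supplies a unique $\W\overline\xi\in H^\#$ realizing it; since the same relation is conjugate-linear in $\overline\xi$ as well, $\overline\xi\mapsto\W\overline\xi$ is conjugate-linear, and the norm estimate gives $\|\W\|\le\|w\|$. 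Then $\conj_H^{-1}\W\colon K^\#\to H$, being a composite of two conjugate-linear bounded maps, is bounded and linear with operator norm $\le\|w\|$.

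Next I would compute $\W$ on a basis. Expand $w=\sum_{i\in I}e_i\otimes u_i$ in $\Hil=K\otimes H$, so $\sum_{i\in I}\|u_i\|^2=\|w\|^2<\infty$, and use continuity together with the action of $\conj_K\otimes\conj_H$ on simple tensors to get $v=\sum_{i\in I}\overline{e_i}\otimes\overline{u_i}$. Plugging $\overline\xi=\overline{e_j}$ into the defining relation and using $\langle\overline{e_i},\overline{e_j}\rangle_{K^\#}=\delta_{ij}$ collapses the series to a single term, so $\langle\W\overline{e_j},\overline h\rangle_{H^\#}=\langle\overline{u_j},\overline h\rangle_{H^\#}$ for all $h\in H$, whence $\W\overline{e_j}=\overline{u_j}$ and $\conj_H^{-1}\W\,\overline{e_j}=u_j$. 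Finally, since $\{\overline{e_i}\}_{i\in I}$ is an orthonormal basis of $K^\#$ and $\conj_H^{-1}\W$ is linear, $\sum_{i\in I}\|\conj_H^{-1}\W\,\overline{e_i}\|_H^2=\sum_{i\in I}\|u_i\|_H^2=\|w\|^2<\infty$, so $\conj_H^{-1}\W\in\Bou_2(K^\#,H)$ with $\|\conj_H^{-1}\W\|_2=\|w\|$, the value being basis-independent by the standard invariance of the Hilbert--Schmidt norm.

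The main obstacle is the bookkeeping in the first step: one must carefully track which argument slots of $\langle v,\overline\xi\otimes\overline h\rangle_{K^\#\otimes H^\#}$ are linear and which are conjugate-linear under the modified scalar multiplications on $K^\#$, $H^\#$, and $K^\#\otimes H^\#$, so as to apply Riesz representation on each slot correctly and conclude that $\W$ is conjugate-linear rather than linear. One should also confirm the assertion recorded just before Definition~\ref{defn:canonical}, that $\conj_K\otimes\conj_H$ is a well-defined conjugate-linear isometry $K\otimes H\to K^\#\otimes H^\#$, and that $\{\overline{e_i}\}$ is indeed an orthonormal basis of $K^\#$. Once these conventions are pinned down — they are exactly those fixed in Subsection~\ref{S:Hilbert-prelim} — everything else is a direct computation.
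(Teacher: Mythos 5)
Your proof is correct and follows essentially the same approach as the paper: Cauchy--Schwarz for the bound $\|\W\|\le\|w\|$, the basis computation $\W\overline{e_i}=\overline{u_i}$ via the orthonormality of $\{\overline{e_i}\}$, and $\sum_i\|u_i\|^2=\|w\|^2$ for the Hilbert--Schmidt conclusion. The one small difference is presentational: you establish the well-definedness and conjugate-linearity of $\W$ by appealing to the Riesz representation theorem and to the conjugate-bilinearity of $(\overline\xi,\overline h)\mapsto\langle(\conj_K\otimes\conj_H)w,\overline\xi\otimes\overline h\rangle_{K^\#\otimes H^\#}$, whereas the paper verifies conjugate multiplicativity and additivity by a direct scalar computation with $\overline\lambda\ast_{K^\#}\overline\xi$. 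Your route has the advantage of making explicit that the defining relation in Definition~\ref{defn:canonical} does determine an element $\W\overline\xi\in H^\#$ — a point the paper leaves tacit — at the cost of being slightly less concrete about the scalar bookkeeping; either way the content is the same.
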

\begin{proof} Let $\lambda\in\C$ and $\xi\in K$. To avoid confusion, we let $\ast_{_{K^\#}}$ and $\ast_{_{H^\#}}$ denote the scalar multiplications in $K^\#$ and $H^\#$ respectively. In $K^\#$ one has $\overline{\lambda}\ast_{_{K^{\#}}}\overline{\xi}=\overline{{\lambda}\,\xi}$. Hence, for every $h\in H$,
\begin{align*}
\big\langle \W (\overline{\lambda}\ast_{_{K^\#}} \overline{{\xi}}),\overline{h}\big\rangle_{_{H^\#}}
=\big\langle \W (\overline{\lambda {\xi}}),\overline{h}\big\rangle_{_{H^\#}}
&=\big\langle (\conj_K \otimes \conj_H) w,(\overline{{\lambda}\,\xi})\otimes \overline{h}\big\rangle_{_{K^\# \otimes H^{\#}}} \\
=\big\langle (\conj_K \otimes \conj_H) w, (\overline{\lambda}\ast_{_{K^\#}}\overline{\xi})\otimes \overline{h}\big\rangle_{_{K^\# \otimes H^{\#}}}
&=\big\langle (\conj_K \otimes \conj_H) w, \overline{\lambda}(\overline{\xi}\otimes \overline{h})\big\rangle_{_{K^\# \otimes H^{\#}}}\\
= \lambda\langle (\conj_K \otimes \conj_H) w, \overline{\xi}\otimes \overline{h}\rangle_{_{K^\# \otimes H^{\#}}}
&= \lambda \langle \W \overline{\xi},\overline{h}\rangle_{_{H^\#}}
= \langle \lambda \ast_{_{H^\#}} \W \overline{\xi},\overline{h}\rangle_{_{H^{\#}}}.
\end{align*}
Since this holds for all $\overline{h}\in H^\#$, $\W$ is conjugate multiplicative. Additivity follows using the bi-additivity of the tensor product. We show boundedness. Fix $\xi\in K$. For all $h\in H$, by Cauchy--Schwarz in
$K^{\#}\otimes H^\#$, we have
\begin{align*}
\big|\langle \W\overline{\xi},\ \overline{h}\rangle_{_{H^\#}}\big|
&=\big|\big\langle (\conj_K\otimes \conj_H)w,\ \overline{\xi}\otimes \overline{h}\big\rangle_{_{K^{\#}\otimes H^{\#}}}\big| \\
&\le \|(\conj_K\otimes \conj_H)w\|_{_{K^{\#}\otimes H^{\#}}}\,
   \|\overline{\xi}\otimes \overline{h}\|_{_{K^{\#}\otimes H^{\#}}}.
\end{align*}
Since $\conj_K \otimes \conj_H:K\otimes H\to K^{\#}\otimes H^\#$ is an isometry,
\[
\|(\conj_K\otimes \conj_H)w\|_{_{K^{\#}\otimes H^{\#}}}=\|w\|_{_{K\otimes H}}.
\]
Moreover,
\[
\|\overline{\xi}\otimes \overline{h}\|_{_{K^{\#}\otimes H^{\#}}}
=\|\overline{\xi}\|_{_{K^{\#}}}\,\|\overline{h}\|_{_{H^{\#}}}
=\|\xi\|_{_{K}}\,\|h\|_{_{H}}.
\]
Therefore
\[
\big|\langle \W\overline{\xi},\ \overline{h}\rangle_{_{H^\#}}\big|
\le \|w\|\,\|\xi\|\,\|h\| \qquad (h\in H).
\]
Taking the supremum over all $h\in H$ with $\|h\|=1$ yields
\[
\|\W\,\overline{\xi}\|
=\sup_{\|h\|=1}\big|\langle \W\overline{\xi},\ \overline{h}\rangle_{_{H^{\#}}}\big|
\le \|w\|\,\|\xi\|.
\]
Hence $\W$ is bounded, with $\|\W\|\le \|w\|$. Write $w=\sum_{i\in I} e_i\otimes u_i$ where $\lVert w \rVert^2=\sum_{i\in I} \lVert u_i \rVert^2 <\infty$. Then for every $h\in H$,
\begin{align*}
\langle \W \, \overline{e_i},\,\overline{h}\rangle_{_{H^{\#}}}
&=\langle \conj_K\otimes \conj_H (w),\ \overline{e_i}\otimes \overline{h}\rangle_{_{K^{\#}\otimes H^{\#}}}\\
&=\Big\langle \sum_{k\in I} \overline{e_k}\otimes \overline{u_k},\ \overline{e_i}\otimes \overline{h}\Big\rangle_{_{K^{\#}\otimes H^{\#}}}
=\langle \overline{u_i},\overline{h}\rangle_{_{H^{\#}}}.
\end{align*}
Therefore, for all $i\in I$, we have $\W \, \overline{e_i}=\overline{u_i} \in H^\#$ and so 
\begin{align*}
\conj_H^{-1} \W \,\overline{e_i} = u_i \in H.
\end{align*}
Since both $\conj_H^{-1}$ (the inverse of the bijective conjugate linear $\conj_H$) and $\W$ are conjugate linear, we have that $\conj_H^{-1} \W$ is linear $:K^\# \to H$. Using this, and the square summability of $(u_i)_{i\in I}$, we obtain that $\conj_H^{-1}\W\in \Bou_2(K^{\#},H)$ and $\Vert \conj_H^{-1}\W \rVert_2 = \lVert w \rVert$.  
\end{proof}

We can now prove our main result in this section. 

\begin{theorem}\label{T:canonical}
Let $H_1,H_2,K$ be Hilbert spaces, and $H:=H_1\otimes H_2$ and $\Hil:=K\otimes H$. Suppose $T:=\ro_{w,w}\in\Bou(\Hil)$ for $w\in \Hil$. For an orthonormal basis $\{e_i\}_{i\in I}$ of $K$, define $C_e: K \to K^\#$ as the linear isometric isomorphism given by:
\begin{align*}
C_e\Big( \sum_{i\in I} \langle x, e_i\rangle_{_{K}} e_i \Big) 
:= \sum_{i\in I} \overline{\langle x, e_i\rangle_{_{K}}} e_i
= \sum_{i\in I} \langle x, e_i\rangle_{_{K}} \ast \overline{e_i}.
\end{align*}
\begin{enumerate}[(1)]
\item For $P\in\Bou_2(H_1)$ and $Q\in\Bou_2(H_2)$ define
\begin{align*}
P\star_{w}Q
\;:=\;
(\conj_H^{-1}\,\W)^*\,(P\otimes Q)\,(\conj_H^{-1}\,\W)\; \in\; \Bou(K^{\#}).
\end{align*}
Then, for $\star = \star(T)\in \prd(\{e_i\}_{i\in I})$, we have
\begin{align*}
P\star Q = C_e^*\,(P\star_w Q)\, C_e.
\end{align*}
\item If $w=\sum_{i\in I} e_i\otimes u_i$ for $u_i\in H$, then for all $x\in H$ we define 
\begin{align*}
\phi_w(x)
:= \sum_{i\in I} {\langle x,u_i \rangle_{_{H}}} \ast \overline{e_i} \; \in \; K^{\#}.
\end{align*}
Then $\phi_w(x)$ depends on $w$ but not on its decomposition $w=\sum_{i\in I} e_i\otimes u_i$. Moreover, the vector $\rho$ in Theorem~\ref{T:thmC-for-rank1-fixed} is given by
\begin{align*}
    \rho = C_e^*\, \big( \phi_w(\mathrm{vec}(BA^\#)) \big),
\end{align*}
for all admissible $A,B$ in Theorem~\ref{T:thmC-for-rank1-fixed}.
\item Finally, the scalar ${\bf r}(\star)$ in Theorem~\ref{T:thmC-for-rank1-fixed} depends on $w$ but not on its decomposition $w=\sum_{i\in I} e_i\otimes u_i$.
\end{enumerate}
\end{theorem}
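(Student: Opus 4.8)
The plan is to reduce all three assertions to properties of the single operator $S:=\conj_H^{-1}\,\W\in\Bou_2(K^\#,H)$ produced by Lemma~\ref{lemma:canonical}, which depends only on $w$ (through $\W$ and the canonical conjugations $\conj_K,\conj_H$), together with the basis relation $S\,\overline{e_i}=u_i$ recorded there. For part (1) I would simply match matrix entries: by Lemma~\ref{prop:rank-one-slicing}, $\langle (P\star Q)e_j,e_i\rangle_{_K}=\langle (P\otimes Q)u_j,u_i\rangle_{_H}$, and substituting $u_i=S\,\overline{e_i}$ rewrites this as $\langle S^*(P\otimes Q)S\,\overline{e_j},\overline{e_i}\rangle_{_{K^\#}}=\langle (P\star_{w}Q)\overline{e_j},\overline{e_i}\rangle_{_{K^\#}}$, with $P\star_{w}Q$ well-defined in $\Bou(K^\#)$ since $S$ is bounded. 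Because $C_e e_j=\overline{e_j}$ and $C_e$ is a linear isometric isomorphism (hence $C_e^*=C_e^{-1}$), the defining property of the adjoint gives $\langle C_e^*(P\star_{w}Q)C_e\,e_j,e_i\rangle_{_K}=\langle (P\star_{w}Q)\overline{e_j},\overline{e_i}\rangle_{_{K^\#}}$. As both $P\star Q$ (trace-class by Lemma~\ref{prop:rank-one-slicing}) and $C_e^*(P\star_{w}Q)C_e$ are bounded and agree on every entry $\langle\,\cdot\,e_j,e_i\rangle_{_K}$, they coincide.

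For part (2) the key identity is $\phi_w=S^*$ as maps $H\to K^\#$. Since $\{\overline{e_i}\}_{i\in I}$ is an orthonormal basis of $K^\#$, for $x\in H$ the Fourier expansion in $K^\#$ reads $S^*x=\sum_{i\in I}\langle S^*x,\overline{e_i}\rangle_{_{K^\#}}\ast\overline{e_i}=\sum_{i\in I}\langle x,S\,\overline{e_i}\rangle_{_H}\ast\overline{e_i}=\sum_{i\in I}\langle x,u_i\rangle_{_H}\ast\overline{e_i}=\phi_w(x)$. Since $S=\conj_H^{-1}\W$ is built from $w$ alone, this proves $\phi_w$ is independent of the decomposition, i.e.\ of the choice of orthonormal basis $\{e_i\}$ of $K$. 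The formula $\rho=C_e^*\,\phi_w(\mathrm{vec}(BA^\#))$ then follows by unwinding $C_e^*=C_e^{-1}$: the linear map $C_e^{-1}:K^\#\to K$ sends $\overline{e_i}\mapsto e_i$, so applying it to $\phi_w(\mathrm{vec}(BA^\#))=\sum_{i\in I}\langle\mathrm{vec}(BA^\#),u_i\rangle_{_H}\ast\overline{e_i}$ returns $\sum_{i\in I}\langle\mathrm{vec}(BA^\#),u_i\rangle_{_H}\,e_i$, which is exactly the vector $\rho$ of Theorem~\ref{T:thmC-for-rank1-fixed}.

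For part (3), since $S\in\Bou_2(K^\#,H)$ is bounded and $\{\overline{e_i}\}_{i\in I}$ is an orthonormal basis of $K^\#$, the family $\{u_i\}_{i\in I}=\{S\,\overline{e_i}\}_{i\in I}$ has closed linear span in $H$ equal to $\overline{\Ran S}$ (as $\mathrm{span}\{\overline{e_i}\}$ is dense and $S$ is continuous, $\overline{S(\mathrm{span}\{\overline{e_i}\})}=\overline{\Ran S}$). Transporting through the bijective isometry $\mathrm{vec}^{-1}:H_1\otimes H_2\to\Bou_2(H_1^\#,H_2)$ yields $\mathcal{U}(\star)=\mathrm{vec}^{-1}\big(\overline{\Ran S}\big)$, which depends on $w$ alone; consequently ${\bf r}(\star)=\sup\{\rk X:X\in\mathcal{U}(\star)\}$ is determined by $w$.

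None of these computations is long; the main obstacle is the bookkeeping of the conjugate-linear structures — the scalar multiplication $\ast$ on $K^\#$, the conjugations $\conj_K,\conj_H$ hidden inside $\W$, and the fact that $C_e$ and $\mathrm{vec}^{-1}$ are genuinely \emph{linear} isometric isomorphisms despite the conjugations appearing in their definitions. The one point that must be handled with care is that the expansion $S^*x=\sum_i\langle S^*x,\overline{e_i}\rangle_{_{K^\#}}\ast\overline{e_i}$ in part (2) is the Fourier expansion taken in $K^\#$ with \emph{its own} inner product and scalar multiplication; once this is set up correctly, the identity $\phi_w=S^*$, and with it the canonicity of the entire construction, is immediate.
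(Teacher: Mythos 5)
Your proof is correct. For part~(1) you take essentially the same route as the paper: identify matrix entries of $C_e^*(P\star_w Q)C_e$ with those of $P\star Q$ via Lemma~\ref{prop:rank-one-slicing}, the relation $(\conj_H^{-1}\W)\overline{e_i}=u_i$ from Lemma~\ref{lemma:canonical}, and $C_e^*=C_e^{-1}$.

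For parts~(2) and~(3) you take a genuinely different and cleaner route than the paper. The paper proves basis-independence by the pedestrian method: write $w=\sum_i e_i\otimes u_i=\sum_j f_j\otimes v_j$ for two orthonormal bases, introduce the change-of-basis coefficients $\alpha_{ij}=\langle e_i,f_j\rangle_{_K}$, derive $u_i=\sum_j\overline{\alpha_{ij}}v_j$ and $v_j=\sum_i\alpha_{ij}u_i$, then compare Fourier coefficients of $\phi_e$ and $\phi_f$ (for part~(2)) and the two closed spans (for part~(3)). You instead observe the structural identity $\phi_w=S^*$, where $S:=\conj_H^{-1}\W\in\Bou_2(K^\#,H)$ is already basis-free by construction, so canonicity of $\phi_w$ is automatic; the Fourier computation $\langle S^*x,\overline{e_i}\rangle_{_{K^\#}}=\langle x,S\overline{e_i}\rangle_{_H}=\langle x,u_i\rangle_{_H}$ identifies the two expressions. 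Similarly, for part~(3) you replace the paper's coefficient chase with the identification $\overline{\mathrm{span}}\{u_i\}=\overline{S(\mathrm{span}\{\overline{e_i}\})}=\overline{\Ran S}$ (valid since $\mathrm{span}\{\overline{e_i}\}$ is dense in $K^\#$ and $S$ is continuous), hence $\mathcal{U}(\star)=\mathrm{vec}^{-1}\big(\overline{\Ran S}\big)$ is manifestly canonical. Your approach is more conceptual: it localizes all the conjugate-linear bookkeeping in the single object $S$ of Lemma~\ref{lemma:canonical}, rather than redoing it coefficient by coefficient. The paper's approach is more elementary but longer. Both are valid.
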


\begin{proof} 

(1) We compute:
\begin{align*}
\langle (\conj_H^{-1}\,\W)^*\,(P\otimes Q)\,(\conj_H^{-1}\,\W)\, \overline{e_j},\,\overline{e_i} \rangle_{_{K^{\#}}} 
&=\langle (P\otimes Q)\,(\conj_H^{-1}\,\W)\, \overline{e_j},\,(\conj_H^{-1}\,\W) \overline{e_i} \rangle_{_{H}}\\
=\langle (P \otimes Q )\,u_j,\,u_i \rangle_{_{H}}
&= \langle P\star Q\,e_j,\,e_i \rangle_{_{K}}\\
= \langle P\star Q\, C_e^*\,\overline{e_j},\,C_e^* \,\overline{e_i} \rangle_{_{K}}
&= \langle \,C_e \, P\star Q\, C_e^*\,\overline{e_j},\,\overline{e_i} \rangle_{_{K^{\#}}},
\end{align*}
where we used Lemma~\ref{prop:rank-one-slicing} and Lemma~\ref{lemma:canonical}. This proves (1).\medskip

\noindent(2) Suppose we have
\begin{align*}
w=\sum_{i\in I} e_i\otimes u_i=\sum_{j\in I} f_j\otimes v_j \quad \mbox{where}\quad \lVert w \rVert^2=\sum_{i\in I} \lVert u_i \rVert^2 = \sum_{j\in I} \lVert v_j \rVert^2 <\infty    
\end{align*}
for orthonormal bases $\{e_i\}_{i\in I}$ and $\{f_j\}_{j\in I}$ of $K$. Then $\{\overline{e_i}\}_{i\in I}$ and $\{\overline{f_j}\}_{j\in I}$ are orthonormal bases of $K^\#$. Suppose $\alpha_{ij}:=\langle {e_i},{f_j}\rangle_{_{K}}$. For $x\in H$, let
\begin{align*}
\phi_e:=\sum_{i\in I} \langle x,u_i \rangle_{_{H}} \ast \overline{e_i} 
\qquad \mbox{and}\qquad
\phi_f:=\sum_{j\in I} \langle x,v_j \rangle_{_{H}} \ast \overline{f_j}.
\end{align*}
We show that $\phi_e=\phi_f$, which shows the required independence of $\phi_w(x)$. 

Note that for arbitrary $h\in H$, we have $
\big\langle w, e_i \otimes {h}\big\rangle_{_{K\otimes H}} = \langle {u_i},{h}\rangle_{_{H}}$. On the other hand, we also have
\begin{align*}
\big\langle w, e_i \otimes {h}\big\rangle_{_{K\otimes H}}
&= \big \langle \sum_{j\in I} f_j \otimes v_j, e_i\otimes h \big \rangle_{_{K\otimes H}}\\
= \sum_{j\in I} \langle f_j,e_i\rangle_{_K} \big \langle v_j, h \big \rangle_{_{H}}
&= \big \langle \sum_{j\in I} \overline{\alpha_{ij}} v_j,h \big \rangle_{_{H}}.
\end{align*}
Therefore $
u_i =\sum_{j\in I} \overline{\alpha_{ij}} v_j \in H$.
Similarly $\big\langle w, f_j \otimes h\big\rangle_{_{K\otimes H}} = \langle v_j,h\rangle_{_{H}}$, and,
\begin{align*}
\big\langle w, f_j \otimes h\big\rangle_{_{K\otimes H}}
&= \big \langle \sum_{i\in I} e_i \otimes u_i, f_j\otimes h \big \rangle_{_{K\otimes H}}\\
= \sum_{i\in I} \langle e_i,f_j\rangle_{_K} \big \langle {u_i}, h \big \rangle_{_{H}}
&= \big \langle \sum_{i\in I} {\alpha_{ij}} {u_i},h \big \rangle_{_{H}}.
\end{align*}
Thus $
v_j =\sum_{i\in I} {\alpha_{ij}} u_i \in H$.
Now, the coefficients of $\phi_e$ with respect to $({\overline{f_j}})_{j\in I}$ are:
\begin{align*}
\langle \phi_e,\overline{f_j}\rangle_{_{K^{\#}}} 
&= \Big \langle \sum_{i\in I} \langle x,u_i \rangle_{_{H}} \ast \overline{e_i}, \overline{f_j} \Big \rangle_{_{K^\#}}
= \sum_{i\in I} \overline{\alpha_{ij}} \langle x,u_i \rangle_{_{H}}\\
&=  \langle x,\sum_{i\in I} {\alpha_{ij}}u_i
\rangle_{_{H}}
=  \langle x,v_j \rangle_{_{H}} = \langle \phi_f,\overline{f_j} \rangle_{_{K^{\#}}}.
\end{align*}
Thus $\phi_e=\phi_f$. Moreover, by definition $\rho=C_e^*\, \phi_w(\mathrm{vec}(BA^\#))$. \medskip

\noindent(3) By the linearity of $\mathrm{vec}^{-1}$, the above computations give
\[
\mathrm{vec}^{-1}(v_j)
=\sum_{i\in I} {\alpha_{ij}}\,\mathrm{vec}^{-1}(u_i)
\qquad (j\in I),
\]
with convergence in $\Bou_2(H_1^\#,H_2)$. Hence
$\mathcal U_f(\star):=\overline{\mathrm{span}}^{\Bou_2}\{\mathrm{vec}^{-1}(v_j):j\in I\}
\subseteq
\overline{\mathrm{span}}^{\Bou_2}\{\mathrm{vec}^{-1}(u_i):i\in I\}=:\mathcal U_e(\star)$. Using the other relation gives the reverse inclusion,
hence $\mathcal U_f(\star)=\mathcal U_e(\star)$. Consequently,
\[
{\bf r}_f(\star)
:=\sup\{\rk X:\, X\in\mathcal U_f(\star)\}
=\sup\{\rk X:\, X\in\mathcal U_e(\star)\}
=:{\bf r}_e(\star).
\]
Thus ${\bf r}(\star)$ does not depend on the decomposition $w=\sum_{i\in I}e_i \otimes u_i$.
\end{proof}

The following operator inequality is immediate.

\begin{cor}\label{cor:canonical}
With the same notation as in Theorem~\ref{T:thmC-for-rank1-fixed} and Theorem~\ref{T:canonical}, we have a canonical operator inequality, for all admissible $A,B$ in Theorem~\ref{T:thmC-for-rank1-fixed}:
\begin{align*}
(\conj_H^{-1}\,\W)^*\,(AA^*\otimes BB^*)\,(\conj_H^{-1}\,\W) \;  \succcurlyeq \; \frac{1}{\min\big(\rk AA^*,\rk BB^*,{\bf r}(\star)\big )} \ro_{\phi,\phi} \;  \succcurlyeq \; 0,
\end{align*}
where $\phi:= \phi_w(\mathrm{vec}(BA^\#)) \in K^{\#}$.
\end{cor}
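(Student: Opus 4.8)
The plan is to obtain the statement directly from Theorem~\ref{T:thmC-for-rank1-fixed} by transporting its inequality along the isometry $C_e$, using that conjugation by a unitary is an order isomorphism. Fix an orthonormal basis $\{e_i\}_{i\in I}$ of $K$, write $w=\sum_{i\in I}e_i\otimes u_i$, set $\star=\star(\ro_{w,w})\in\prd(\{e_i\}_{i\in I})$, and let $m:=\min(\rk AA^*,\rk BB^*,{\bf r}(\star))$ (with the convention $1/\infty:=0$). Theorem~\ref{T:thmC-for-rank1-fixed} gives, in $\Bou(K)$,
\[
AA^*\star BB^*\;\succcurlyeq\;\tfrac1m\,\ro_{\rho,\rho}\;\succcurlyeq\;0 ,
\]
with $\rho$ as defined there.

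First I would rewrite both sides in the canonical language via Theorem~\ref{T:canonical}: part (1) yields $AA^*\star BB^*=C_e^*\,(AA^*\star_w BB^*)\,C_e$, and part (2) yields $\rho=C_e^*\phi$ with $\phi=\phi_w(\mathrm{vec}(BA^\#))$. Substituting, the displayed inequality becomes $C_e^*\,(AA^*\star_w BB^*)\,C_e\succcurlyeq \tfrac1m\,\ro_{C_e^*\phi,\,C_e^*\phi}\succcurlyeq 0$ in $\Bou(K)$. Next I would conjugate by $C_e$. Since $C_e:K\to K^{\#}$ is a surjective linear isometry it is unitary, so $C_e^*C_e=\Id_K$ and $C_eC_e^*=\Id_{K^{\#}}$, and hence $X\mapsto C_eXC_e^*$ carries the inequality to $\Bou(K^{\#})$. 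Using the rank-one sandwich identity $C_e\,\ro_{x,y}\,C_e^*=\ro_{C_ex,\,C_ey}$ (a one-line check) together with $C_eC_e^*\phi=\phi$, this produces
\[
AA^*\star_w BB^*\;\succcurlyeq\;\tfrac1m\,\ro_{\phi,\phi}\;\succcurlyeq\;0 .
\]
Finally, unfolding the definition $AA^*\star_w BB^*=(\conj_H^{-1}\,\W)^*\,(AA^*\otimes BB^*)\,(\conj_H^{-1}\,\W)$ from Theorem~\ref{T:canonical}(1) gives precisely the asserted operator inequality. The degenerate case $m=\infty$ is automatic, since the congruence $(\conj_H^{-1}\W)^*(AA^*\otimes BB^*)(\conj_H^{-1}\W)$ is manifestly positive and $\ro_{\phi,\phi}\succcurlyeq 0$.

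No new analytic input is needed; the only place that merits care is the bookkeeping around the conjugate Hilbert space $K^{\#}$ — verifying that $C_e$ is genuinely a linear unitary $K\to K^{\#}$ and that the rank-one transformation identity is consistent with the conjugate scalar multiplication on $K^{\#}$ — but both follow from the constructions and from Lemma~\ref{lemma:canonical}, which is why the statement is labelled immediate.
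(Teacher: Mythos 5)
Your proof is correct and follows exactly the route the paper intends (and merely cites references for): start from Theorem~\ref{T:thmC-for-rank1-fixed}, translate via Theorem~\ref{T:canonical}(1)--(2), and conjugate by the unitary $C_e$ using the rank-one sandwich identity $C_e\,\ro_{x,y}\,C_e^*=\ro_{C_ex,\,C_ey}$. You have simply made explicit the unitary-congruence bookkeeping that the paper's one-line proof leaves to the cited references.
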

\begin{proof}
    This follows from Theorem~\ref{T:thmC-for-rank1-fixed} and Theorem~\ref{T:canonical} using that $C_e$ is unitary {\cite[p. 4-5]{oertel2023beyond}} and {\cite[Proposition 16.3(e)]{conway2000course}}.
\end{proof}

As desired, the above inequality, which is unitarily equivalent to the one in Theorem~\ref{T:thmC-for-rank1-fixed}, depends only on $w\in \Hil$, and not on the decomposition $w=\sum_{i\in I} e_i\otimes u_i$ for any given orthonormal basis $\{e_i\}_{i\in I}$ of $K$. This concludes the paper.

\section*{Acknowledgements} D.G.~was partially supported by NSF grant \#2350067. P.K.V.~was supported by the Centre de recherches math\'ematiques and Universit\'e Laval (CRM-Laval) Postdoctoral Fellowship.

\bibliographystyle{plain}
\bibliography{biblio}
\end{document}